\theoremstyle{definition}
\newtheorem{definition}{Definition}[section]
\theoremstyle{plain}
\newtheorem{lemma}[definition]{Lemma}
\newtheorem{theorem}[definition]{Theorem}
\newtheorem{proposition}[definition]{Proposition}
\theoremstyle{remark}
\newtheorem{remark}[definition]{Remark}
\newcommand{\bA}{\mathbb{A}}
\newcommand{\bC}{\mathbb{C}}
\newcommand{\bH}{\mathbb{H}}
\newcommand{\bN}{\mathbb{N}}
\newcommand{\bP}{\mathbb{P}}
\newcommand{\bQ}{\mathbb{Q}}
\newcommand{\bR}{\mathbb{R}}
\newcommand{\bV}{\mathbb{V}}
\newcommand{\bZ}{\mathbb{Z}}
\newcommand{\WP}{\mathbb{W}\mathbb{P}}
\newcommand{\calA}{\mathcal{A}}
\newcommand{\calD}{\mathcal{D}}
\newcommand{\calL}{\mathcal{L}}
\newcommand{\calM}{\mathcal{M}}
\newcommand{\calN}{\mathcal{N}}
\newcommand{\calO}{\mathcal{O}}
\newcommand{\calS}{\mathcal{S}}
\newcommand{\calT}{\mathcal{T}}
\newcommand{\calX}{\mathcal{X}}
\newcommand{\calY}{\mathcal{Y}}
\DeclareMathOperator{\Gr}{Gr}
\DeclareMathOperator{\NS}{NS}
\DeclareMathOperator{\rank}{rank}
\DeclareMathOperator{\im}{im}
\numberwithin{table}{section}
\numberwithin{figure}{section}
\begin{document}

\title[Threefolds Fibred by Mirror Sextic Double Planes]{Threefolds Fibred by Mirror Sextic Double Planes}

\author[R. Kooistra]{Remkes Kooistra}
\address{The King's University, 9125 -- 50 St NW, Edmonton, AB, T6B 2H3, Canada}
\email{Remkes.Kooistra@kingsu.ca}

\author[A. Thompson]{Alan Thompson}
\address{Department of Mathematical Sciences, Loughborough University, Loughborough, Leicestershire, LE11 3TU, United Kingdom.}
\email{A.M.Thompson@lboro.ac.uk}

\begin{abstract}We present a systematic study of threefolds fibred by K3 surfaces that are mirror to sextic double planes. There are many parallels between this theory and the theory of elliptic surfaces. We show that the geometry of such threefolds is controlled by a pair of invariants, called the generalized functional and generalized homological invariants, and we derive an explicit birational model for them, which we call the Weierstrass form. We then describe how to resolve the singularities of the Weierstrass form to obtain the ``minimal form'', which has mild singularities and is unique up to birational maps in codimension $2$. Finally we describe some of the geometric properties of threefolds in minimal form, including their singular fibres, canonical divisor, and Betti numbers.
\end{abstract}

\subjclass[2010]{14D06 (14J30, 14J28)}

\date{}
\maketitle

\section{Introduction}

The aim of this paper is to perform a systematic study of threefolds fibred by mirror sextic double planes. A \emph{sextic double plane} is a generic K3 surface of degree two; it is well-known that such K3 surfaces may be realized as double covers of the projective plane ramified along a smooth sextic curve. A \emph{mirror sextic double plane} is a K3 surface that is mirror, in the sense of Dolgachev \cite{mslpk3s} and Nikulin \cite{isbfa}, to a sextic double plane. Mirror sextic double planes are polarized by the rank $19$ lattice 
\[M_1 := H \oplus E_8 \oplus E_8 \oplus A_1,\]
where $E_8$ and $A_1$ are the negative definite root lattices and $H$ is the hyperbolic plane lattice. We henceforth refer to mirror sextic double planes as \emph{$M_1$-polarized K3 surfaces}.

$M_1$-polarized K3 surfaces are a special case of a broader class of K3 surfaces, polarized by the lattices
\[M_n := H \oplus E_8 \oplus E_8 \oplus \langle -2n\rangle,\quad n \in \bN.\]
Such \emph{$M_n$-polarized K3 surfaces} form a natural class to study, as they are mirror to K3 surfaces of degree $2n$. Threefolds fibred by $M_n$-polarized K3 surfaces have been studied in detail by the second author in a series of papers with C. F. Doran, A. Harder, and A. Y. Novoseltsev \cite{flpk3sm,cytfmqk3s,cytfhrlpk3s}. However, these papers almost always assume $n \geq 2$, as  a generic $M_1$-polarized K3 surface admits a polarization-preserving antisymplectic involution which gives rise to novel behaviour. This makes the theory of threefolds fibred by $M_1$-polarized K3 surfaces substantially more complex than the theory of threefolds fibred by $M_n$-polarized K3 surfaces with $n \geq 2$. By performing a detailed study of $M_1$-polarized fibrations, this paper addresses this gap in this program of research.

In addition to addressing the extra complexity presented by the involution, the results of this paper are also significantly stronger than those of its predecessors. In \cite{cytfmqk3s,cytfhrlpk3s}, the authors quickly restrict to studying Calabi-Yau threefolds fibred by $M_n$-polarized K3 surfaces in order to limit the field of inquiry; in particular, the Calabi-Yau assumption imposes strong constraints on the possible values of $n$  \cite[Theorem 2.12]{cytfhrlpk3s}. In this paper we make no such assumption: all of our results hold for arbitrary threefolds fibred by $M_1$-polarized K3 surfaces, although results about $M_1$-polarized Calabi-Yau threefolds, analogous to those in \cite{cytfmqk3s,cytfhrlpk3s}, may be quickly deduced from the main results of this paper.
\medskip

The first main result of this paper is Theorem \ref{thm:Lclassification}, which shows that one-parameter families of lattice polarized K3 surfaces are classified by two invariants. The first is the \emph{generalized functional invariant}, which is a map from the base of the family to an appropriate coarse moduli space, encoding the moduli of the general fibres. The second is the \emph{generalized homological invariant}, a local invariant describing the monodromy around singular fibres. This result is proved in much more generality than required for the rest of the paper, with quite general assumptions on the polarization.

We then restrict ourselves to threefolds fibred by $M_1$-polarized K3 surfaces. In this setting the generalized functional invariant map is a map to the classical modular curve and, once the generalized functional invariant is fixed, there are only two possibilities for the generalized homological invariant around any singular fibre, differing by a choice of sign. Threefolds fibred by $M_1$-polarized K3 surfaces are therefore classified by maps from the base of the fibration to the classical modular curve, along with a choice of sign for each singular fibre.

This classification scheme is the first of many parallels between elliptic surface theory and the theory of threefolds fibred by $M_1$-polarized K3 surfaces. Given these parallels, one might suspect a hidden link between $M_1$-polarized K3 surfaces and elliptic curves; such a link is provided by work of Clingher, Doran, Lewis, and Whitcher \cite{milpk3s,nfk3smmp}. They show that any $M_1$-polarized K3 surface admits a canonically-defined symplectic involution. Quotienting by this involution and resolving singularities, one obtains a Kummer surface $\mathrm{Kum}(E \times E)$ associated to the product of an elliptic curve $E$ with itself. This process canonically associates an elliptic curve $E$ to any $M_1$-polarized K3 surface and this correspondence is bijective; we will discuss it further in Section \ref{sec:genfibres}.

Our next main result is Theorem \ref{thm:weierstrassform}, which shows that threefolds fibred by $M_1$-polarized K3 surfaces admit a birational map to a \emph{Weierstrass form}, which is defined by an explicit equation. This form is very general but, as is also the case for Weierstrass forms of elliptic surfaces, tends to be quite singular. The bulk of this paper is concerned with resolving the singularities of our Weierstrass forms. Theorem \ref{thm:minimalform} shows that, after a series of birational modifications, any threefold fibred by $M_1$-polarized K3 surfaces may be placed in a \emph{minimal form}, which has mild (terminal) singularities and is unique up to birational maps that are isomorphisms in codimension $1$. 

Finally, we compute various properties of the minimal forms. The singular fibres are completely described by Theorem \ref{thm:singularfibres} (see also Table \ref{tab:fibres}) and, under some mild additional assumptions, explicit formulas for the canonical divisor and Betti numbers are given in Sections \ref{sec:canonicalsheaf} and \ref{sec:betti}.

\subsection{Structure of the paper} 

In Section \ref{sec:background} we define lattice polarized families of K3 surfaces and threefolds fibred by lattice polarized K3 surfaces; these definitions form the basis of the remainder of the paper. We then define the generalized functional and homological invariants and prove Theorem \ref{thm:Lclassification}, which shows that these two invariants suffice to classify one-parameter lattice polarized families of K3 surfaces. Finally, we specialize all of this to the $M_1$-polarized case and prove some additional results about the generalized functional and homological invariants.

In Section \ref{sec:weierstrass} we introduce the Weierstrass form of a threefold fibred by $M_1$-polarized K3 surfaces. We begin with a summary of relevant results from \cite{flpk3sm}, which use the Batyrev mirror construction \cite{dpmscyhtv} to construct an explicit toric family of $M_1$-polarized K3 surfaces depending upon two complex parameters $\beta'$ and $\gamma$. This family is described by Proposition \ref{prop:2dimX1}. We then prove Theorem \ref{thm:weierstrassform}, which shows that any threefold fibred by $M_1$-polarized K3 surfaces is birational to a threefold in the form given by Proposition \ref{prop:2dimX1}, where $\beta'$ and $\gamma$ are now meromorphic functions on the base of the fibration. We call this birational model the \emph{Weierstrass form}. In Section \ref{sec:genfibres} we investigate the basic properties of the Weierstrass form: we describe some of the geometry of its general fibres and discuss the link with elliptic surfaces. In Section \ref{sec:canonical} we show (Proposition \ref{prop:canonicalmodel}) that the Weierstrass form of an $M_1$-polarized K3 surface may also be obtained intrinsically from the structure of the $M_1$-polarization, without having to resort to mirror symmetry or toric geometry.

In Section \ref{sec:singularfibres} we embark upon a detailed study of the singular fibres appearing in Weierstrass forms. Proceeding case-by-case, we explicitly describe how to construct birational models for these singular fibres; many of these computations were assisted by the computer algebra systems Sagemath \cite{sage} and Singular \cite{singular}. The results are summarized by Theorem  \ref{thm:singularfibres} and Table \ref{tab:fibres}, which give a complete birational classification of all singular fibres that occur. Again, there are strong parallels with the theory of elliptic surfaces, which inform the naming scheme of our classification. 

In Section \ref{sec:invariants}, we pull together the results of the previous two sections to prove Theorem \ref{thm:minimalform} which shows that, after a series of birational modifications, any threefold fibred by $M_1$-polarized K3 surfaces may be placed in \emph{minimal form}. This form is unique up to birational modifications that are isomorphisms in codimension $1$, has at worst terminal singularities, and has singular fibres as classified by  Theorem \ref{thm:singularfibres}. Finally, we compute some of the properties of the minimal form. Proposition \ref{prop:smoothness} gives a sufficient criterion for the minimal form to be smooth and Proposition \ref{prop:cbf} gives an explicit expression for the canonical divisor. Finally, the results of Section \ref{sec:betti} compute all of the Betti numbers of the minimal form under an extra smoothness assumption and, in the case of the third Betti number, a mild additional assumption on the singular fibre types. These computations use several results from \cite{cytfmqk3s}, which are reproduced in Appendix \ref{appendix} for convenience.

Many of the results from Sections \ref{sec:singularfibres} and \ref{sec:invariants} have analogues for threefolds fibred by $M_n$-polarized K3 surfaces with $n > 1$. These results may all be found stated in their most general forms in \cite{cytfhrlpk3s}; references to this paper are provided for comparison.
\medskip 

\noindent\textbf{Acknowledgments.} The authors are indebted to Andrey Novoseltsev for giving us access to a list of torically induced $M_1$-polarized K3 fibrations on toric Calabi-Yau threefolds that he generated using Sagemath \cite{sage}. This data provided a valuable sanity-check for many of the results contained herein. The second author would also like to thank Charles Doran for numerous useful conversations.

\section{Background} \label{sec:background}

We broadly follow the notational conventions of the related paper \cite{cytfhrlpk3s}.  Let $H$ denote the hyperbolic plane lattice, $E_8$ the negative definite $E_8$ root lattice, and $L$ a non-degenerate primitive sublattice of the K3 lattice $\Lambda_{\mathrm{K3}} := H^{\oplus 3} \oplus E_8^{\oplus 2}$ with signature $(1,\rho-1)$. Let $L^{\perp}$ denote the orthogonal complement of $L$ in $\Lambda_{\mathrm{K3}}$. We begin by defining a notion of $L$-polarization for smooth families of K3 surfaces.

\begin{definition}\label{def:L-pol} \textup{\cite[Definition 2.1]{flpk3sm}}
Let $L \subseteq \Lambda_{\mathrm{K3}}$ be a lattice and $\pi^U\colon \mathcal{X}^U \rightarrow U$ be a smooth family of K3 surfaces over a $1$-dimensional complex manifold $U$. We say that $\mathcal{X}^U$ is an \emph{$L$-polarized family of K3 surfaces} if
\begin{itemize}
\item there is a trivial local subsystem $\mathcal{L}$ of $R^2 \pi_*\mathbb{Z}$ so that, for each $p \in U$, the fibre $\mathcal{L}_p \subseteq H^2({X}_p,\mathbb{Z})$ of $\calL$ over $p$ is a primitive sublattice of $\NS(X_p)$ that is isomorphic to $L$, and
\item there is a line bundle $\calA$ on $\calX^U$ whose restriction $\calA_p$ to any fibre $X_p$ is ample with first Chern class $c_1(\calA_p)$ contained in $\calL_p$ and primitive in $\NS(X_p)$.
\end{itemize}
\end{definition}

Next we extend this definition to threefolds fibred by K3 surfaces, in a way that allows our threefolds to contain singular fibres and mild singularities. Such threefolds will form the principal objects of study in this paper.

\begin{definition} \label{def:LpolK3} We say that $\pi\colon \calX \to B$ is a  \emph{threefold fibred by $L$-polarized K3 surfaces} if all of the following hold.
\begin{itemize}
\item $\calX$ is a threefold with at worst terminal singularities. By \cite[Corollaries 5.38 and 5.39]{bgav}, it follows that any singularities of $\calX$ are isolated points.
\item $B$ is a $1$-dimensional complex manifold. We usually think of $B$ as being either a compact Riemann surface or the open unit disc in $\bC$.
\item $\pi\colon \calX \to B$ is a proper, flat, surjective morphism whose general fibre is a K3 surface.
\item There exists a point $p \in B$, such that the fibre $X_p$ of $\pi$ over $p$ has N\'{e}ron-Severi group $\NS(X_p) \cong L$.
\item Let $U \subset B$ denote the open set over which $\calX$ is smooth and the fibres of $\pi$ are smooth K3 surfaces, and let $\pi^U\colon \calX^U \to U$ denote the restriction of $\calX$ to $U$. Then $\pi^U\colon \calX^U \to U$ is an $L$-polarized family of K3 surfaces, in the sense of Definition \ref{def:L-pol}.
\end{itemize}
\end{definition}

\subsection{The generalized functional and homological invariants}

Let $\pi^U\colon \calX^U \to U$ denote a non-isotrivial $L$-polarized family of K3 surfaces. Assume that there exists a point $p \in U$ such that the N\'{e}ron-Severi group of the fibre $X_p$ over $p$ is isomorphic to $L$. We may associate two invariants to $\calX^U$.

\begin{definition} The \emph{generalized functional invariant} is the map $g\colon U \to \calM_{L}$ from $U$ to the coarse moduli space of $L$-polarized K3 surfaces, which sends $p \in U$ to the point in moduli corresponding to the fibre $X_p$ (c.f. \cite[Remark 3.4]{mslpk3s}).
\end{definition}

Fix a point $p \in U $ such that $\NS(X_p) \cong L$. Monodromy around a loop in $\pi_1(U,p)$ induces an automorphism of $H^2(X_p,\bZ)$, and Definition \ref{def:L-pol} implies that this automorphism must fix $\NS(X_p) \cong L$. We thus obtain an automorphism of the transcendental lattice  $T(X_p) \cong L^{\perp}$.

\begin{definition} The \emph{generalized homological invariant} is the monodromy representation $\rho\colon \pi_1(U,p) \to \mathrm{O}(L^{\perp})$.
\end{definition}

Based on \cite[Theorem 2.3]{cytfmqk3s}, we can prove that these two invariants are sufficient to determine $\calX^U$.

\begin{theorem}\label{thm:Lclassification} Let $\pi^U\colon \calX^U \to U$ denote a non-isotrivial $L$-polarized family of K3 surfaces over a $1$-dimensional complex manifold $U$. Assume that there exists a point $p \in U$ such that the N\'{e}ron-Severi group of the fibre over $p$ is isomorphic to $L$.  Then $\pi^U\colon \calX^U \to U$ is uniquely determined \textup{(}up to isomorphism\textup{)} by its generalized functional and homological invariants.
\end{theorem}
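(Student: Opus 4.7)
The plan is to reduce the statement to the global Torelli theorem for lattice polarized K3 surfaces, using the cited \cite[Theorem 2.3]{cytfmqk3s} as a black box. The key is that the two invariants together prescribe a period map from the universal cover of $U$ to the period domain $\Omega_L$ of $L$-polarized K3 surfaces, and a classical argument then recovers the family from this period map.

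First I would pass to the universal cover $q\colon \tilde{U} \to U$ based at $p$ and pull back $\pi^U$ to $\tilde{\pi}^U\colon \tilde{\calX}^U \to \tilde{U}$. Since $\tilde{U}$ is simply connected, the local subsystem $\calL \subseteq R^2 \pi^U_* \bZ$ pulls back to a trivial system, which extends to a canonical identification of $H^2$ of each fibre with $\Lambda_{\mathrm{K3}}$, i.e. a marking of $\tilde{\calX}^U$. The periods of this marked family define a holomorphic map $\tilde{g}\colon \tilde{U} \to \Omega_L$ to the period domain of $L$-polarized K3 surfaces. The coarse moduli space $\calM_L$ is the quotient $\Omega_L/\Gamma$ for a suitable arithmetic subgroup $\Gamma \subseteq \mathrm{O}(L^{\perp})$, and by construction the composition $\tilde{U} \xrightarrow{\tilde{g}} \Omega_L \to \calM_L$ coincides with $g \circ q$. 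Thus $\tilde{g}$ is a lift of the generalized functional invariant to the period domain.

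Next I would observe that, once a marking at $p$ is fixed, the deck transformation action of $\pi_1(U,p)$ on $\tilde{U}$ intertwines with the action on $\Omega_L$ via the monodromy representation: for every $\gamma \in \pi_1(U,p)$ and $x \in \tilde{U}$ one has $\tilde{g}(\gamma \cdot x) = \rho(\gamma) \cdot \tilde{g}(x)$. This $\rho$-equivariance of $\tilde{g}$ is forced by the data of $g$ and $\rho$: given $g$ we know the $\Gamma$-orbit of $\tilde{g}(x)$ for each $x$, and requiring equivariance under $\rho$ pins $\tilde{g}$ down uniquely up to the residual ambiguity of the initial marking at $p$, which only affects the resulting family up to isomorphism. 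Conversely, from $\tilde{g}$ together with its $\rho$-equivariance we can read off both $g$ (by descent to $\calM_L$) and $\rho$ (from the monodromy of the flat family of transcendental lattices).

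The final step is to invoke \cite[Theorem 2.3]{cytfmqk3s}, which says that an $L$-polarized family of K3 surfaces over a simply connected base is determined up to isomorphism by its period map to $\Omega_L$. Applying this to $\tilde{\pi}^U\colon \tilde{\calX}^U \to \tilde{U}$ shows that $\tilde{g}$ determines $\tilde{\calX}^U$ up to isomorphism, and the $\rho$-equivariance then permits descent of this family along $q$ to recover $\calX^U$ up to isomorphism. The main obstacle is checking that the result of \cite{cytfmqk3s}, which is stated in a slightly different setting, applies for our general primitive $L \subseteq \Lambda_{\mathrm{K3}}$ of signature $(1,\rho-1)$; this should follow because its proof uses only the formal properties of the period map for marked $L$-polarized K3 surfaces and the existence of the tautological family over the marked period domain. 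The non-isotriviality hypothesis is needed precisely to ensure that $\tilde{g}$ is non-constant, so that the assumption that the fibre at $p$ has $\mathrm{NS}(X_p) \cong L$ suffices to rigidify the polarization across $U$.
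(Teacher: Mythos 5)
Your overall architecture (pass to the universal cover, use the marked period map, then descend) can be made to work, but as written it has a genuine gap at exactly the step that carries the content of the theorem: uniqueness of the descent. Knowing that the pullbacks of two families $\calX^U$ and $\calY^U$ to $\tilde{U}$ are isomorphic is not yet the statement that $\calX^U \cong \calY^U$; two non-isomorphic families over $U$ can have isomorphic pullbacks, differing by a twist of the descent data by fibrewise automorphisms. You assert that ``the $\rho$-equivariance then permits descent \dots to recover $\calX^U$ up to isomorphism'', but you never argue why the descent data are pinned down by $\rho$. Note that equivariance of the period map $\tilde{g}$ cannot do this on its own: $-\mathrm{Id} \in \mathrm{O}(L^{\perp})$ acts trivially on the period domain, so $\tilde{g}$ together with its equivariance is blind to sign twists of $\rho$, and by Proposition \ref{prop:plusminus} and Remark \ref{rem:antisymplectic} such sign twists correspond (in the $M_1$ case, via the polarization-preserving antisymplectic involution) to families that are genuinely not isomorphic. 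So your second paragraph is addressing the easy direction --- that $(g,\rho)$ determines $\tilde{g}$ up to the marking at $p$, which in fact follows from $g$ alone --- while the hard direction, that $(g,\rho)$ determines the family over $U$, is left as an assertion.

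The missing ingredient is the strong Torelli statement that an automorphism of a K3 surface acting trivially on $H^2$ is the identity, applied fibrewise: a lift of a deck transformation $\gamma$ to the pulled-back family is unique once its action on $H^2$ of a fibre is prescribed, and that action is determined by triviality on $L$ (from the polarization) together with $\rho(\gamma)$ on $L^{\perp}$, since $L \oplus L^{\perp}$ has finite index in the torsion-free lattice $H^2(X_p,\bZ)$. With this, a marking-compatible isomorphism of the pullbacks is automatically equivariant and descends, giving $\calX^U \cong \calY^U$. This is precisely what the paper's proof supplies in different clothing: it constructs the gluing automorphism $\psi$ along a loop, uses the Global Torelli Theorem to see that a nontrivial $\psi$ acts nontrivially on $H^2(X_p,\bZ)$, hence --- since it fixes $\NS(X_p) \cong L$ --- nontrivially on $T(X_p) \cong L^{\perp}$, and concludes that the homological invariants of the two families would differ. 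Be careful, too, with your black-box reading of \cite[Theorem 2.3]{cytfmqk3s}: as it is used in the paper, that result is the analogue of the present theorem for a specific lattice, not a general statement that a family over a simply connected base is determined by its period map; the latter assembly-of-fibrewise-isomorphisms step again rests on the same Torelli uniqueness you would need to spell out.
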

\begin{proof}  For a contradiction, suppose that $\calX^U$ and $\calY^U$ are two non-isomorphic, non-isotrivial $L$-polarized families of K3 surfaces over the same base, with the same generalized functional and homological invariants. On any simply connected open subset $V$ of $U$, Ehresmann's Theorem (see, for example, \cite[Section 9.1.1]{htcagI}) shows that we can choose markings compatible with the $L$-polarizations on the restricted families of K3 surfaces $\calX^U|_V$ and $\calY^U|_V$. Since $\calX^U$ and $\calY^U$ have the same generalized functional invariants, it thus follows from the Global Torelli Theorem \cite[Theorem 2.7']{fagkk3s} that the families $\calX^U|_V$ and $\calY^U|_V$ are isomorphic. 

Thus the families $\calX^U$ and $\calY^U$ must differ in how such open sets are glued together. To construct this gluing data, choose a simply connected open set $V \subset U$ such that $p \in V$. Let $\gamma \subset U$ be a loop, starting and ending at $p$, and let $\{V_i\}_{i=1}^n$ be a cover of $\gamma$ by simply connected open sets in $U$, chosen such that $V_1 = V_n = V$. Then choose fibrewise isomorphisms $\phi_i\colon \calX^U|_{V_i} \to \calY^U|_{V_i}$ for each $i$, so that the gluing maps $\phi_i\circ \phi_{i+1}^{-1}$ on the intersections $V_i \cap V_{i+1}$ are all trivial. This gives rise to a pair of fibrewise isomorphisms $\phi_1,\phi_n \colon \calX^U|_{V} \to \calY^U|_{V}$. The families $\calX^U$ and $\calY^U$ are isomorphic on the union $\cup_{i=1}^n V_i$ if and only if $\psi := \phi_1 \circ \phi_n^{-1}$ is the trivial automorphism on $\calX^U|_{V}$.

Therefore, by assumption, there must exist a loop $\gamma \in \pi_1(U,p)$ so that the fibrewise automorphism $\psi\colon \calX^U|_{V} \to \calX^U|_{V}$ defined by the above construction is non-trivial. In particular the action of $\psi$ on the fibre $X_p$ must be nontrivial. By the Global Torelli Theorem, $\psi$ induces a nontrivial automorphism $\psi^* \in \mathrm{O}(H^2(X_p,\bZ))$ and, by Definition \ref{def:L-pol}, this automorphism must fix $\NS(X_p) \cong L$. Therefore, $\psi^*$ must act nontrivially on $T(X_p) \cong L^{\perp}$.

Let $\rho_{\calX}$ and $\rho_{\calY}$ denote the generalized homological invariants of $\calX^U$ and $\calY^U$. By construction, we have $\rho_{\calY}(\gamma) = \psi^* \rho_{\calX}(\gamma)$. But this contradicts the assumption that $\rho_{\calX} = \rho_{\calY}$.
\end{proof}

Let $\pi\colon \calX \to B$ be a threefold fibred non-isotrivially by $L$-polarized K3 surfaces. By restricting to the open subset $\pi^U\colon \calX^U \to U$ where $\calX$ is a smooth family, we may associate a generalized functional invariant $g\colon U \to \calM_{L}$ and generalized homological invariant $\rho\colon \pi_1(U,p) \to \mathrm{O}(L^{\perp})$ to $\pi\colon \calX \to B$. It follows from Theorem \ref{thm:Lclassification} that $g$ and $\rho$ suffice to uniquely determine $\pi\colon \calX \to B$ up to birational maps affecting only the fibres over $B \setminus U$.

\subsection{Threefolds fibred by \texorpdfstring{$M_1$}{M1}-polarized K3 surfaces}

Let $H$ denote the hyperbolic plane lattice and let $A_1$ and $E_8$ denote the negative definite $A_1$ and $E_8$ root lattices, respectively. Let $M_1$ denote the rank $19$ lattice $M_1 := H \oplus E_8 \oplus E_8 \oplus A_1$. Throughout the remainder of this paper, we will investigate the case $L = M_1$. Note that the orthogonal complement of $M_1$ in $\Lambda_{\mathrm{K3}}$ is $M_1^{\perp} = H \oplus \langle 2 \rangle$.

In this setting, the assumption that $\calX^U$ is not an isotrivial family, combined with the fact that $\calM_{M_1}$ has dimension $20 - \rank(M_1) = 1$, implies that the image of the generalized functional invariant map $g$ is open in $\calM_{M_1}$. Thus, at a general point $p \in U$, the N\'{e}ron-Severi group of the fibre over $p$ is isomorphic to $M_1$.

In this setting we can make some stronger statements about the generalized functional and homological invariants. Firstly, a result of Dolgachev \cite[Theorem 7.1]{mslpk3s} shows that the coarse moduli space $\calM_{M_1}$ of $M_1$-polarized K3 surfaces is isomorphic to the classical modular curve $\bH/\mathrm{PSL}(2,\bZ)$. This modular curve has a canonical compactification, usually denoted $X(1)$, which we denote by $\overline{\calM}_{M_1}$; it is a genus $0$ orbifold curve with orbifold points of orders $(2,3,\infty)$.

\begin{definition} Let $\pi\colon \calX \to B$ be a threefold fibred non-isotrivially by $M_1$-polarized K3 surfaces. The generalized functional invariant map $g\colon U \to \calM_{M_1}$ is a finite morphism between curves, so extends uniquely to a map $\overline{g}\colon B \to \overline{\calM}_{M_1}$. We call $\overline{g}$ the \emph{extended generalized functional invariant}.
\end{definition}

Using the ideas from the proofs of Theorem \ref{thm:Lclassification} and \cite[Theorem 2.3]{cytfmqk3s}, we can also prove a stronger result about the generalized homological invariant in this setting.

\begin{proposition} \label{prop:plusminus} Let $\calX^U \to U$ and $\calY^U \to U$ denote two non-isotrivial $M_1$-polarized families of K3 surfaces over a $1$-dimensional complex manifold $U$, with the same generalized functional invariants, and let $\rho_{\calX}$ and $\rho_{\calY}$ denote their generalized homological invariants. Then for general choice of $p \in U$ and any loop $\gamma \in \pi_1(p,U)$, we have $\rho_{\calX}(\gamma) = \pm \rho_{\calY}(\gamma)$.
\end{proposition}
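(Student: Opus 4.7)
The plan is to reuse the difference-automorphism construction from the proof of Theorem~\ref{thm:Lclassification} and then exploit the specific structure of $M_1^{\perp}$ to show that, for general $p$, the resulting automorphism can only act as $\pm \mathrm{id}$ on the transcendental lattice. Specifically, fix a point $p \in U$ with $\NS(X_p) \cong M_1$, a loop $\gamma \in \pi_1(U,p)$, and a simply connected open neighborhood $V \ni p$. For $\gamma$ I would choose a chain of simply connected opens $V_1 = V, V_2, \ldots, V_n = V$ in $U$ covering $\gamma$. Since $\calX^U$ and $\calY^U$ share a generalized functional invariant, arguing exactly as in the proof of Theorem~\ref{thm:Lclassification} one may pick fibrewise isomorphisms $\phi_i \colon \calX^U|_{V_i} \to \calY^U|_{V_i}$ whose transition maps on the overlaps $V_i \cap V_{i+1}$ are trivial; setting $\psi_{\gamma} := \phi_1 \circ \phi_n^{-1}$, one checks directly that $\rho_{\calY}(\gamma) = \psi_{\gamma}^* \circ \rho_{\calX}(\gamma)$ as automorphisms of $T(X_p) \cong M_1^{\perp}$.

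Next, I would reduce the statement to a lattice-theoretic claim about Hodge isometries of $T(X_p)$. Since each $\phi_i$ is an isomorphism of $M_1$-polarized families, $\psi_{\gamma}^*$ fixes $M_1 \subseteq \NS(X_p)$ pointwise, and the Global Torelli Theorem implies $\psi_{\gamma}^*$ is a Hodge isometry of $H^2(X_p,\bZ)$. It therefore restricts to a Hodge isometry of $T(X_p)$, so it suffices to show that for general $p \in U$ the group of Hodge isometries of $T(X_p) \cong H \oplus \langle 2 \rangle$ equals $\{\pm \mathrm{id}\}$.

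This last claim is where the main obstacle lies, and I would approach it as follows. Since $T(X_p)$ has signature $(2,1)$, the stabilizer of any Hodge structure inside $\mathrm{O}(T(X_p) \otimes \bR)$ is compact (it is isomorphic to $\mathrm{SO}(2) \times \{\pm 1\}$, fixing an oriented positive $2$-plane and its negative-definite complement), so its intersection with the discrete group $\mathrm{O}(T(X_p))$ is finite. Moreover, for each fixed $\phi \in \mathrm{O}(M_1^{\perp}) \setminus \{\pm \mathrm{id}\}$, the set of Hodge structures on $M_1^{\perp}$ for which $\phi$ is a Hodge isometry---equivalently, the set of oriented positive $2$-planes in $M_1^{\perp} \otimes \bR$ preserved by $\phi$---is a proper analytic subset of the associated period domain. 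Since $\mathrm{O}(M_1^{\perp})$ is countable and the family is non-isotrivial, the preimage in $U$ of the locus where the Hodge isometry group of $T(X_p)$ strictly exceeds $\{\pm \mathrm{id}\}$ is a countable union of proper analytic subsets of $U$; for any $p$ in the dense complement we have $\psi_{\gamma}^*|_{T(X_p)} = \pm \mathrm{id}$, yielding $\rho_{\calY}(\gamma) = \pm \rho_{\calX}(\gamma)$ as required.
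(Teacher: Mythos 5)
Your argument is correct, and after the shared first step it diverges from the paper's in an interesting way. Both you and the paper begin identically: the gluing construction from the proof of Theorem~\ref{thm:Lclassification} produces a fibrewise ``difference'' automorphism $\psi_\gamma$ with $\rho_{\calY}(\gamma) = \psi_\gamma^*\,\rho_{\calX}(\gamma)$, where $\psi_\gamma^*$ fixes $\NS(X_p) \cong M_1$ and hence restricts to a Hodge isometry of $T(X_p) \cong M_1^{\perp}$. The paper then exploits the fact that $\psi_\gamma^*$ is induced by an honest automorphism of the K3 surface: it invokes Nikulin's result to descend $\psi^*$ to $\mathrm{O}(M_1^\perp)^*$, uses irreducibility of the rational Hodge structure on $M_1^\perp$ together with the constraint $\varphi(n) \mid \rank(M_1^{\perp}) = 3$ on the order $n$ (plus Vaidya's totient bound) to force $n = 2$, and concludes $\psi^* = -\mathrm{Id}$ from irreducibility. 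You instead prove the purely Hodge-theoretic statement that at a very general period point of the signature-$(2,1)$ domain the integral Hodge isometry group of $T(X_p)$ is exactly $\{\pm\mathrm{id}\}$: the fixed locus of any $\phi \in \mathrm{O}(M_1^{\perp})\setminus\{\pm\mathrm{id}\}$ in the period domain is the intersection of its projectivized eigenspaces with the period quadric, hence finite, so countability of $\mathrm{O}(M_1^{\perp})$ plus non-isotriviality (which makes the local period maps non-constant) gives a countable union of proper analytic subsets of $U$ outside of which $\psi_\gamma^*|_{T(X_p)} = \pm\mathrm{id}$. Your route is more self-contained (no need for the automorphism-order machinery, Nikulin's discriminant result, or the totient bound) and would generalize to transcendental lattices of any rank, whereas the paper's argument is tailored to rank $3$; on the other hand, the paper's argument shows the conclusion holds at \emph{every} point with $\NS(X_p) \cong M_1$, i.e.\ away from the Picard-rank-$20$ locus, which is a slightly more concrete generality condition than ``outside a countable union of proper analytic subsets,'' and it identifies the sign ambiguity with the antisymplectic involution of Remark~\ref{rem:antisymplectic}. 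Two cosmetic points: pullback along an isomorphism of surfaces is automatically a Hodge isometry, so the appeal to Global Torelli there is superfluous; and the identification of $T(Y_p)$ with $T(X_p)$ via $\phi_n$ is left implicit, though the paper's own proof is equally terse on that bookkeeping.
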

\begin{proof} Since the generalized homological invariant is defined uniquely by the fibration, if there is a loop $\gamma \in \pi_1(U,p)$ with $\rho_{\calX}(\gamma) \neq \rho_{\calY}(\gamma)$, then $\calX^U$ and $\calY^U$ are not isomorphic. By the proof of Theorem \ref{thm:Lclassification}, there thus exists an nontrivial automorphism $\psi\colon X_p \to X_p$ such that  $\rho_{\calY}(\gamma) = \psi^* \rho_{\calX}(\gamma)$.

Recall that $\psi^*$ must fix $\NS(X_p) \cong M_1$ and act nontrivially on $T(X_p) \cong M_1^{\perp}$; $\psi$ is thus a non-symplectic automorphism of $X_p$. Define $\mathrm{O}(M_1^\perp)^* \subset \mathrm{O}(M_1^\perp)$ to be the kernel of the natural map  $\mathrm{O}(M_1^\perp) \to A_{M_1^{\perp}}$, where $A_{M_1^{\perp}}$ denotes the discriminant group. Then by \cite[Proposition 1.6.1]{isbfa}, $\psi^*$ descends to an element of $\mathrm{O}(M_1^\perp)^*$. Furthermore, since $X_p$ is general, $M_1^\perp$ supports an irreducible rational Hodge structure, so $\psi^*$ must act irreducibly on $M_1^\perp$. Therefore, by \cite[Theorem 3.1]{fagkk3s}, it follows that the order $n$ of $\psi^*$ must satisfy $\varphi(n) | \rank(M_1^\perp) = 3$, where $\varphi(n)$ denotes Euler's totient function. Using Vaidya's \cite{ietf} lower bound  for $\varphi(n)$,
\[\varphi(n) \geq \sqrt{n}\  \mathrm{for}\  n > 2,\  n \neq 6\]
we see that $\varphi(n) |3$ implies that $n \leq 9$. By checking all the possibilities, we see that $n = 2$ or $n=1$. We cannot have $n=1$, since $\psi^*$ is non-trivial. So $\psi^*$ must have order $2$. As $\psi^*$ acts irreducibly on $M_1^{\perp}$ it has no nontrivial invariant subspace; the only option is $\psi^* = -\mathrm{Id}$. 
\end{proof}

\begin{remark}\label{rem:antisymplectic} The sign indeterminacy in this proposition corresponds to the existence of an antisymplectic involution on a general $M_1$-polarized K3 surface that preserves the $M_1$-polarization. This involution will be described explicitly in Section \ref{sec:genfibres}.
\end{remark}

\section{Weierstrass form} \label{sec:weierstrass}

\label{sec:normalforms} In this section we follow \cite[Section 5.4]{flpk3sm} to derive a normal form for $M_1$-polarized K3 surfaces, then exhibit some of the properties of this normal form. 

An $M_1$-polarized K3 surface is mirror to a $\langle 2 \rangle$-polarized K3 surface, which can generically be expressed as a hypersurface of degree $6$ in $\WP(1,1,1,3)$. By the Batyrev mirror construction \cite{dpmscyhtv}, a generic $M_1$-polarized K3 surface can therefore be realized torically as an anticanonical hypersurface in $\WP(1,1,1,3)^{\circ}$, the polar dual of $\WP(1,1,1,3)$. 

We derive a normal form for a generic such anticanonical hypersurface following the method of \cite[Section 4.2]{msag}. Let $M := \bZ^3$ denote the standard lattice and let $M_{\bR} := M \otimes \bR$. An anticanonical hypersurface in $\WP(1,1,1,3)$ corresponds to the reflexive polytope $\Delta \subset M_{\bR}$ with vertices
\begin{align*} u_0 &:= (-1,-1,-1) \\
u_1 &:= (5,-1,-1) \\
u_2 &:= (-1,5,-1) \\
u_3 &:= (-1,-1,1).
\end{align*}
The toric variety $\WP(1,1,1,3)^{\circ}$ is determined by the fan $\Sigma^{\circ} \subset M_{\bR}$ whose rays are generated by the above set of vectors. These vectors generate a sublattice $M' \subset M$ of index $12$. The quotient $M/M'$ is the group
\[G:=\{(d_0,d_1,d_2,d_3) \in \bZ_6^3 \oplus \bZ_2 : d_0+d_1+d_2 + 3d_3 \equiv 0 \ (\text{mod}\ 6)\}/\bZ_6,\]
where $\bZ_6 \subset \bZ_6^3 \oplus \bZ_2$ is embedded diagonally and $\bZ_n := \bZ/n\bZ$ denotes integers modulo $n$.

The vectors $u_1,u_2,u_3$ form a basis for $M'$ and $u_0 = -u_1-u_2-3u_3$, so if we view $\Sigma^{\circ}$ as a fan in $M' \otimes \bR$, then $\Sigma^{\circ}$ is just the standard fan for $\WP(1,1,1,3)$. Switching to the overlattice $M$, there is a $G$-action on $\WP(1,1,1,3)$,
\[(x_0,x_1,x_2,x_3) \longmapsto (\mu^{d_0}x_0, \mu^{d_1}x_1,\mu^{d_2}x_2,\mu^{3d_3}x_3),\]
where $(d_0,d_1,d_2,d_3) \in G$ and $\mu$ is a primitive sixth root of unity.  $\WP(1,1,1,3)^{\circ}$ is the quotient of $\WP(1,1,1,3)$ by this $G$-action.

Next we need to obtain the equation of a generic anticanonical hypersurface in $\WP(1,1,1,3)^{\circ}$. The homogeneous coordinate ring of $\WP(1,1,1,3)^{\circ}$ is given by $\bC[x_0,x_1,x_2,x_3]$ graded by the Chow group $A_2(\WP(1,1,1,3)^{\circ})$. We have an exact sequence
\[0 \longrightarrow \bZ^3 \longrightarrow \bZ^4 \stackrel{\delta}{\longrightarrow} \bZ \oplus G \longrightarrow 0,\]
where the first map is given by the matrix whose rows are the vectors $u_i$, and the map $\delta$ is given by 
\[\delta(d_0,d_1,d_2,d_3) := (d_0+d_1+d_2 + 3d_3, (-d_1-d_2-3d_3, d_1,d_2,d_3)).\]
Thus $A_2(\WP(1,1,1,3)^{\circ}) \cong \bZ \oplus G$ and the grading on $\bC[x_0,x_1,x_2,x_3]$ is given by taking $x_0^{d_0}x_1^{d_1}x_2^{d_2}x_3^{d_3}$ to $\delta(d_0,d_1,d_2,d_3)$.

The anticanonical class of $\WP(1,1,1,3)^{\circ}$ has degree $\delta(1,1,1,1) = (6,0)$. A generic polynomial in those monomials in $\bC[x_0,x_1,x_2,x_3]$ with degree $(6,0)$ is
\[a_0x_0^6 + a_1x_1^6 + a_2x_2^6 + a_3x_3^2 + a_4x_0x_1x_2x_3 + a_5x_0^2x_1^2x_2^2.\]
Using the torus action, we rescale the variables in this equation by
\[ x_0 \mapsto a_1^{\frac{1}{6}}a_2^{\frac{1}{6}}a_3^{\frac{1}{2}}a_4^{-1}x_0,\quad x_1 \mapsto a_1^{-\frac{1}{6}}x_1, \quad x_2 \mapsto a_2^{-\frac{1}{6}} x_2, \quad x_3 \mapsto a_3^{-\frac{1}{2}}x_3,\]
to obtain
\begin{equation}\label{eq:toricform}\alpha x_0^6 + x_1^6 + x_2^6 + x_3^2 + x_0x_1x_2x_3 + \beta x_0^2x_1^2x_2^2,\end{equation}
for $\alpha = \frac{a_0a_1a_2a_3^3}{a_4^6}$ and $\beta = \frac{a_3a_5}{a_4^2}$. This is a homogeneous polynomial in $\WP(1,1,1,3)$ with variables $x_0,x_1,x_2$ of weight $1$ and $x_3$ of weight $3$, which defines a hypersurface; an anticanonical hypersurface in $\WP(1,1,1,3)^{\circ}$ is given by the quotient of such a hypersurface by the group $G$.

The quotient $\WP(1,1,1,3)/G$ is realized by the map
\begin{align*}
\psi\colon \WP(1,1,1,3) &\longrightarrow \bP^5 \\
(x_0,x_1,x_2,x_3) &\longmapsto (x_0^6,x_1^6,x_2^6,x_3^2,x_0x_1x_2x_3,x_0^2x_1^2x_2^2),
\end{align*}
so $\WP(1,1,1,3)^{\circ}$ is isomorphic to the image of this map. This image is defined by the equations
\[ y_0y_1y_2 - y_5^3 = y_3y_5-y_4^2 = 0,\]
where $(y_0,\ldots,y_5)$ are homogeneous coordinates on $\bP^5$. The map $\psi$ takes the family of anticanonical hypersurfaces defined by Equation (1) to the hyperplane
\[\alpha y_0 + y_1 + y_2 + y_3 + y_4 + \beta y_5 = 0.\] 
We thus obtain a family of surfaces
\begin{equation} \label{eq:compactWeierstrass} \{\alpha y_0 + y_1 + y_2 + y_3 + y_4 + \beta y_5 =y_0y_1y_2 - y_5^3 = y_3y_5-y_4^2 = 0 \} \subset \bP^5.\end{equation}

Working on the maximal torus $y_4 = 1$, $(y_0,y_1,y_2,y_3,y_5) \in (\bC^*)^5$, we may solve for $y_0 = \frac{1}{y_1y_2y_3^3}$ and $y_5 = \frac{1}{y_3}$ in these expressions. On this maximal torus, the family defined by Equation \eqref{eq:compactWeierstrass} is isomorphic to the vanishing locus in $(\bC^*)^3$ of the rational polynomial
\begin{equation} \label{eq:openWeierstrass} x+ y + z + \frac{\alpha}{x^3yz} + \frac{\beta}{x} + 1 = 0,\end{equation}
where we have set $x := y_3$, $y := y_2$, and $z := y_1$ for clarity. This is the open form used in \cite[Section 5.4]{flpk3sm}.

We introduce a new parameter
\[\gamma := \frac{1728 \alpha}{(4\beta-1)^3}.\]
From the results of \cite[Section 5.4]{flpk3sm}, $\gamma$ parametrizes the compact moduli space $\overline{\calM}_{M_1}$. We find that $\gamma = \infty$ at the orbifold point of order $3$, $\gamma = -1$ at the orbifold point of order $2$, and $\gamma = 0$ at the infinite order (cusp) point.

\begin{remark}\label{rem:jgamma} The usual parameter $j$ on $\overline{\calM}_{M_1} \cong X(1)$ is related to $\gamma$ by  $j = -\frac{1}{\gamma}$. For a smooth $M_1$-polarized K3 surface with generalized functional invariant $\gamma$, this $j$ can be interpreted as the $j$-invariant of a canonically associated elliptic curve; see Section \ref{sec:genfibres}.
\end{remark}

By \cite[Lemma 5.17]{flpk3sm}, after performing a minimal resolution of singularities, Equation \eqref{eq:compactWeierstrass} defines  a family of $M_1$-polarized K3 surfaces over the base $\{(\alpha,\beta) \in \bC^2 : \gamma = \frac{1728 \alpha}{(4\beta-1)^3} \notin \{0,-1,\infty\}\}$. After changing variables 
\[y \mapsto \frac{(4\beta - 1)y}{4x},\quad z \mapsto \frac{(4\beta - 1)z}{4x}\] 
and completing the square in $x$, we may re-write \eqref{eq:openWeierstrass} as
\begin{equation} \label{eq:2dimopenX1} \frac{x^2}{4\beta -1} + y  + z + \frac{\gamma}{27yz} + 1 = 0. \end{equation}
This reparametrizes our family over the base $\{(\beta,\gamma) \in \bC^2 : \beta \neq \frac{1}{4},\ \gamma \notin \{0,-1\}\}$. For simplicity of notation, we introduce a new parameter $\beta' := \frac{1}{4\beta - 1}$.

\begin{proposition} \label{prop:2dimX1} The family given by Equation \eqref{eq:2dimopenX1} admits a compactification to a family of singular quartic K3 surfaces in $\bP^3[w,x,y,z]$ over the base $\{(\beta',\gamma) \in \bC^2 : \beta' \neq 0,\ \gamma \notin \{0,-1\}\}$, given by
\begin{equation}
\label{eq:2dimX1}\frac{\gamma}{27} w^4 + w^2yz + wy^2z + wyz^2 + \beta' x^2yz = 0.
\end{equation}
This family admits a minimal resolution to a $2$-parameter $M_1$-polarized family of K3 surfaces with generalized functional invariant given by projection onto the $\gamma$ variable. The fibres of the generalized functional invariant map give rise to families of $M_1$-polarized K3 surfaces that are isotrivial but not trivial: monodromy around $\beta' \in \{0, \infty\}$ acts on the transcendental lattice as $-\mathrm{Id}$.
\end{proposition}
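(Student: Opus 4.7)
The plan is to address the four assertions of the proposition in order, relying on the change of variables preceding its statement to leverage the existing analysis of \cite[Section 5.4]{flpk3sm}. For the compactification, I would clear denominators in Equation \eqref{eq:2dimopenX1} by multiplying through by $27yz$ and then homogenize in a new variable $w$ to total degree $4$; direct comparison with Equation \eqref{eq:2dimX1} confirms the result. The assertion that the minimal resolution produces an $M_1$-polarized family with generalized functional invariant equal to projection onto $\gamma$ then follows from the work already assembled: Equation \eqref{eq:2dimopenX1} is, by construction, birational to \eqref{eq:openWeierstrass}, so \cite[Lemma 5.17]{flpk3sm} supplies the K3 structure, and the earlier identification of $\gamma$ with the coordinate on $\overline{\calM}_{M_1}$ identifies the generalized functional invariant.

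For isotriviality along the fibres of the $\gamma$-projection, I would observe that Equation \eqref{eq:2dimopenX1} is invariant under the substitution $x \mapsto cx$, $\beta' \mapsto c^{-2}\beta'$ for any $c \in \bC^*$. Thus for fixed $\gamma$ and $\beta'_0, \beta'_1 \in \bC^*$, the choice $c^2 = \beta'_0/\beta'_1$ defines an explicit fibrewise isomorphism between the corresponding K3 surfaces. The necessary choice of square root already foreshadows that the family cannot be globally trivialized over the $\beta'$-line.

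The main step is the monodromy calculation. Parametrize a small loop around $\beta' = 0$ as $\beta'(t) = e^{2\pi i t}\beta'_0$ for $t \in [0,1]$, and lift via $c(t) = e^{\pi i t}$; the resulting family of isomorphisms $(x,y,z) \mapsto (c(t)x, y, z)$ closes up at $t=1$ to the involution $\sigma \colon x \mapsto -x$ on the central fibre $X_p$. Because $\sigma$ is the limit of fibrewise isomorphisms in an $M_1$-polarized family, it preserves the constant local subsystem $\calL$ of Definition \ref{def:L-pol} and so fixes $M_1 \subset \NS(X_p)$; combining this with Proposition \ref{prop:plusminus} applied to our family versus the trivial constant family (both with functional invariant identically equal to $\gamma$) forces $\sigma^*$ to act on $T(X_p) \cong M_1^\perp$ as $\pm \mathrm{Id}$. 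To pin down the sign, I would use the Poincar\'e residue representative
\[
\Omega_p = \mathrm{Res}\left( \frac{w\,dx\wedge dy\wedge dz - x\,dw\wedge dy\wedge dz + y\,dw\wedge dx\wedge dz - z\,dw\wedge dx\wedge dy}{F} \right)
\]
of the holomorphic two-form on the quartic defined by $F = 0$ in \eqref{eq:2dimX1}. Since $F$ depends on $x$ only through $x^2$, each of the four terms in the numerator picks up a sign under $\sigma$, yielding $\sigma^* \Omega_p = -\Omega_p$; hence $\sigma$ is non-symplectic and the sign must be negative. The monodromy around $\beta' = \infty$ is obtained from the inverse loop in $\bC^*$, and since $\sigma$ is an involution this again acts as $-\mathrm{Id}$.

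The main obstacle lies in this last step: one must ensure that the involution $\sigma$, which is \emph{a priori} defined on the singular model \eqref{eq:2dimX1}, lifts cleanly through the minimal resolution so that the residue calculation faithfully computes the action on $H^{2,0}$ of the smooth $M_1$-polarized fibre. This is essentially automatic, since $\sigma$ is the restriction of the linear involution of $\bP^3$ given by $x \mapsto -x$, which preserves the singular locus of $F = 0$ and so commutes with the minimal resolution.
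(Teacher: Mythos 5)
Your proposal is correct in outline, but it takes a genuinely different route from the paper: the paper's proof of Proposition \ref{prop:2dimX1} is a one-line restatement, citing Lemma 5.17 and the first part of Proposition 5.18 of \cite{flpk3sm}, whereas you reprove the monodromy claim directly. Your homogenization of \eqref{eq:2dimopenX1} to \eqref{eq:2dimX1} is the right computation, the rescaling $x \mapsto cx$, $\beta' \mapsto c^{-2}\beta'$ correctly exhibits isotriviality along the fibres of the $\gamma$-projection, and the loop-lifting argument identifying the monodromy around $\beta' \in \{0,\infty\}$ with the involution $\sigma\colon x \mapsto -x$, together with the Poincar\'e residue computation showing $\sigma^*\Omega = -\Omega$, is exactly the mechanism that the cited results of \cite{flpk3sm} encode (and it matches the description of the antisymplectic involution in Section \ref{sec:genfibres} and Remark \ref{rem:antisymplectic}). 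What your approach buys is self-containedness and an explicit geometric identification of the monodromy transformation; what the paper's citation buys is brevity and the fact that the K3/ $M_1$-polarization statements over the full $2$-parameter base come packaged with the reference.

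One step needs patching: you invoke Proposition \ref{prop:plusminus} for ``our family versus the trivial constant family,'' but that proposition is stated only for \emph{non-isotrivial} families, and its proof uses a general point $p$ with $\NS(X_p) \cong M_1$; the restriction to a fibre of the $\gamma$-projection is isotrivial, so the hypotheses do not literally apply. The repair is easy and uses only the engine inside that proof: since you already know $\sigma$ has order $2$, note that $\sigma^*$ is a Hodge isometry of $T(X_p)_{\bQ}$, so its $(\pm 1)$-eigenspaces are rational sub-Hodge structures; for $\gamma$ very general the fibre has $\NS(X_p) \cong M_1$ and the transcendental Hodge structure on the rank-$3$ lattice $M_1^{\perp}$ is irreducible, forcing $\sigma^*|_{T(X_p)} = \pm\mathrm{Id}$, and your residue computation selects $-\mathrm{Id}$. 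Since the monodromy representation is locally constant over the connected $\gamma$-parameter space, the conclusion holds for every fibre of the generalized functional invariant map, which is what the proposition asserts. (The sign of your lift $c(t) = e^{\pi i t}$ versus $e^{-\pi i t}$ only reverses the orientation of the loop and is immaterial, as $-\mathrm{Id}$ is its own inverse.)
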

\begin{proof} This result is a restatement of \cite[Lemma 5.17]{flpk3sm} and the first part of \cite[Proposition 5.18]{flpk3sm} in the notation of this paper.
\end{proof}

\begin{remark}
Whilst we have followed the discussion in \cite[Section 5.4]{flpk3sm} above for consistency, we note that the family \eqref{eq:2dimX1} can be obtained much more directly from \eqref{eq:compactWeierstrass} as follows. Working on the affine chart $y_5 = 1$, we may write $y_3 = y_4^2$ and $-y_2 = \alpha y_0 + y_1 + y_3 + y_4 + \beta$, from which we obtain the affine hypersurface
$\{yz(x^2 + x + \alpha y + z + \beta) + 1 = 0\} \subset \bA^3$, where we have set $y:= y_0$, $z:=y_1$, and $x := y_4$. After compactifying to $\bP^3$, completing the square in $x$, and rescaling variables we obtain Equation \eqref{eq:2dimX1}.
\end{remark}

If we set $\beta' \neq 0$ to be a constant in Equation \eqref{eq:2dimopenX1} and rescale $x$, we obtain a special family over $\{\gamma \in \bC : \gamma \notin \{0,-1\}\}$ that may be written on the maximal torus $(\bC^*)^3$ as 
\begin{equation} \label{eq:openX1} x^2 + y + z + \frac{\gamma}{27yz} + 1 = 0.\end{equation}

\begin{proposition} \label{prop:X1} The family given by Equation \eqref{eq:openX1} admits a compactification to a  family $\overline{\calX_1}$ of singular quartic K3 surfaces in $\bP^3[w,x,y,z]$ over $\overline{\calM}_{M_1}$, with parameter $\gamma$, given by 
\begin{equation}
\label{eq:X1} \frac{\gamma}{27} w^4 + w^2yz + wy^2z + wyz^2 + x^2yz = 0.
\end{equation}
Its minimal resolution is a family $\calX_1 \to \overline{\calM}_{M_1}$, whose fibres over $\gamma \notin \{0,-1,\infty\}$ form an $M_1$-polarized family of K3 surfaces. Moreover, the family $\calX_1$ is \emph{modular}, in the sense that its generalized functional invariant map is an isomorphism.
\end{proposition}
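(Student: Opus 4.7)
The plan is to realize $\calX_1$ as the restriction to a generic slice of the two-parameter family constructed in Proposition \ref{prop:2dimX1}, and then read off all claims from that construction together with the identification of $\gamma$ with the coordinate on $\overline{\calM}_{M_1}$. First I would verify that fixing $\beta'$ to a nonzero constant $c$ in Equation \eqref{eq:2dimopenX1} and absorbing $\sqrt{c}$ into $x$ produces exactly Equation \eqref{eq:openX1}; carrying out the same substitution at the level of the projective compactification takes \eqref{eq:2dimX1} to \eqref{eq:X1}. This immediately gives the family over $\{\gamma \in \bC : \gamma \notin \{0,-1\}\}$. Since Equation \eqref{eq:X1} makes sense as a quartic in $\bP^3$ for every value of $\gamma \in \bC$, one obtains a family over $\bC_\gamma$; to extend across $\gamma = \infty \in \overline{\calM}_{M_1}$, I would multiply \eqref{eq:X1} by $27/\gamma$ and pass to a local coordinate $\gamma^{-1}$ near infinity, yielding a well-defined quartic family over all of $\overline{\calM}_{M_1} \cong \bP^1$.

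Next, Proposition \ref{prop:2dimX1} supplies a simultaneous minimal resolution of \eqref{eq:2dimX1} to an $M_1$-polarized family over $\{(\beta',\gamma) : \beta' \neq 0,\, \gamma \notin \{0,-1\}\}$. Restricting this resolution to the slice $\beta' = c$ produces the desired minimal resolution $\calX_1$ over $\{\gamma : \gamma \notin \{0,-1\}\}$ and, in particular, witnesses its $M_1$-polarization. The fibres over the remaining points $\gamma \in \{0,-1,\infty\}$ are not K3 surfaces, but one still obtains a family of surfaces there by applying a fibrewise minimal resolution of the surface singularities of \eqref{eq:X1}; this extends $\calX_1$ to a family over the whole of $\overline{\calM}_{M_1}$.

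Finally, for modularity: by Proposition \ref{prop:2dimX1} the generalized functional invariant of the two-parameter family is projection onto the $\gamma$ variable, so the generalized functional invariant of the slice $\calX_1$ is the tautological map $\gamma \mapsto \gamma$ from the base $\overline{\calM}_{M_1}$ to $\overline{\calM}_{M_1}$, under the identification of the $\gamma$-line with $\overline{\calM}_{M_1}$ recalled in the paragraph preceding Remark \ref{rem:jgamma}. This is manifestly an isomorphism.

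The step I expect to be the main obstacle is bookkeeping at the special values $\gamma \in \{0,-1,\infty\}$: these are simultaneously the loci where the K3 fibres of the (unresolved) quartic family degenerate and the orbifold points of $\overline{\calM}_{M_1}$, and one must check both that Equation \eqref{eq:X1} extends sensibly to a family of quartics across $\gamma = \infty$ and that the minimal resolution constructed from Proposition \ref{prop:2dimX1} on the open locus glues consistently with the fibrewise resolutions at these three points to yield a single family $\calX_1 \to \overline{\calM}_{M_1}$.
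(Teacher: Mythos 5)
Your proposal is correct, but it takes a different route from the paper: the paper proves Proposition \ref{prop:X1} simply by observing that it is a restatement of the second part of \cite[Proposition 5.18]{flpk3sm}, whereas you derive it internally from Proposition \ref{prop:2dimX1} (itself the restatement of \cite[Lemma 5.17]{flpk3sm} and the first part of \cite[Proposition 5.18]{flpk3sm}) by slicing at $\beta' = c$, rescaling $x$ by $\sqrt{c}$, homogenizing in $\gamma$ to cross $\gamma = \infty$, and restricting the simultaneous resolution and the $M_1$-polarization to the slice. This reduction is sound: the trivial local subsystem and the fibrewise ample class restrict to the one-parameter slice, so the $M_1$-polarization over $\gamma \notin \{0,-1,\infty\}$ is immediate, and since Proposition \ref{prop:2dimX1} says the generalized functional invariant of the two-parameter family is projection onto $\gamma$ (under the identification of the $\gamma$-line with $\overline{\calM}_{M_1}$ stated just before Remark \ref{rem:jgamma}), the functional invariant of the slice is the identity, giving modularity. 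What your approach buys is a proof that the second statement of \cite[Proposition 5.18]{flpk3sm} is a formal consequence of the first together with \cite[Lemma 5.17]{flpk3sm}, making the paper more self-contained; what it costs is exactly the bookkeeping you flag at $\gamma \in \{0,-1,\infty\}$ (non-reduced fibre $\{w^4=0\}$ at $\gamma=\infty$, reducible fibre at $\gamma=0$, extra node at $\gamma=-1$), where ``minimal resolution'' of the threefold total space needs to be interpreted as in Section \ref{sec:genfibres} (crepant resolution along the four curves of ADE singularities) and glued with the open locus; the paper sidesteps this entirely by citation, and in any case the proposition only makes substantive claims over $\gamma \notin \{0,-1,\infty\}$, so this caveat does not affect correctness.
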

\begin{proof} This is a restatement of the second part of \cite[Proposition 5.18]{flpk3sm} in the notation of this paper.\end{proof}

\begin{remark} The family $\calX_1$ has been previously studied by a number of authors, who have used various different birational models according to their requirements and personal tastes. In addition to \cite[Section 5.4]{flpk3sm}, which we have already mentioned, it appears in the context of mirror symmetry in work of Dolgachev \cite[Example 8.3]{mslpk3s} and Przyjalkowski et al. (see the survey paper \cite{tlgm}, where it appears as family $X_{1-1}$, and references therein), and in the context of Picard-Fuchs equations in work of Smith \cite[Example 2.15, case $\calD$]{pfdefk3s} and Doran and Malmendier \cite[Section 6.1, case $n = 1$]{cymrsrmt}. 
\end{remark}

The following lemma computes the periods of the family $\calX_1$.

\begin{lemma} \label{lem:X1monodromy} The local monodromy matrices of the transcendental variation of Hodge structure $\calT(\calX_1)$ associated to $\calX_1$ are conjugate to
\[\Gamma_{-1} := \begin{pmatrix} {-1} & 0 & 0 \\ 0 & {1} & 0 \\ 0 & 0 & 1 \end{pmatrix},\quad 
\Gamma_{\infty} := \begin{pmatrix} -1 & 0 & 0 \\ 0 & \omega & 0 \\ 0 & 0 & \omega^5 \end{pmatrix},\quad
\Gamma_{0} := \begin{pmatrix} 1 & 1 & 0 \\ 0 & 1 & 1 \\ 0 & 0 & 1 \end{pmatrix},\]
around $\gamma = -1$, $\gamma = \infty$, and $\gamma = 0$, respectively, where $\omega$ denotes a primitive sixth root of $1$.
\end{lemma}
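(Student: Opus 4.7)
The plan is to derive the monodromy from the Picard--Fuchs equation satisfied by the periods of the holomorphic $2$-form on $\calX_1$. Since $\rank M_1^\perp = 3$ and the generalized functional invariant of $\calX_1$ is an isomorphism, this is a third-order linear ODE in $\gamma$ with regular singularities precisely at $\gamma\in\{0,-1,\infty\}$, so the conjugacy class of each local monodromy matrix is determined by the indicial exponents at that point together with whether logarithmic solutions appear.

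First I would use the Griffiths residue applied to the open affine model \eqref{eq:openX1} to write the holomorphic $2$-form on a smooth fibre as
\[\Omega_\gamma \;=\; \mathrm{Res}\!\left(\frac{dx\wedge dy\wedge dz}{xyz\cdot f(\gamma;x,y,z)}\right),\qquad f := x^2+y+z+\tfrac{\gamma}{27yz}+1,\]
and run a Griffiths--Dwork reduction to produce the Picard--Fuchs operator. The required operator has already been worked out in the references cited above; in particular, by Smith \cite[Example 2.15, case $\calD$]{pfdefk3s} and Doran--Malmendier \cite[Section 6.1, case $n=1$]{cymrsrmt}, in the variable $z=-\gamma$ it is the generalized hypergeometric operator associated to ${}_3F_2(\tfrac12,\tfrac16,\tfrac56;1,1;z)$. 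This is in keeping with the Clingher--Doran--Lewis--Whitcher elliptic correspondence of Section \ref{sec:genfibres}: the operator is a symmetric square of the second-order operator whose solutions are periods of the universal elliptic curve over $X(1)$.

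For this hypergeometric operator the indicial exponents are classical. They are $\{0,0,0\}$ at $z=0$, $\{0,1,\tfrac12\}$ at $z=1$ (which corresponds to $\gamma=-1$), and $\{\tfrac12,\tfrac16,\tfrac56\}$ at $z=\infty$. At $\gamma=0$ the Frobenius method produces nested logarithmic solutions $1,\log\gamma,(\log\gamma)^2$ up to holomorphic perturbation, and the resulting monodromy is a single size-$3$ unipotent Jordan block, conjugate to $\Gamma_0$. At $\gamma=-1$ the Saalschütz-type condition $\sum b_j-\sum a_i=\tfrac12>0$ holds, so by the Beukers--Heckman theory the local monodromy is a pseudo-reflection (no logarithms appear), giving a semisimple matrix with eigenvalues $\{1,1,-1\}$, conjugate to $\Gamma_{-1}$. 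At $\gamma=\infty$ the three exponents are pairwise incongruent modulo $1$, so the monodromy is automatically diagonalizable with eigenvalues $\{-1,\omega,\omega^5\}$, conjugate to $\Gamma_\infty$.

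The main technical obstacle is the derivation of the Picard--Fuchs operator itself, but since this is already available in the cited literature the remainder of the proof reduces to a standard application of the Frobenius method together with the classical monodromy theory of generalized hypergeometric equations.
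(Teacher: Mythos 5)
Your proposal is correct and follows essentially the same route as the paper: both identify the period/Picard--Fuchs structure of $\calX_1$ with the ${}_3F_2(\tfrac{1}{6},\tfrac{1}{2},\tfrac{5}{6};1,1)$ hypergeometric equation by appeal to Doran--Malmendier (and Smith), and then read off the local monodromies from classical hypergeometric monodromy theory --- the paper via Levelt's theorem, you via indicial exponents, the Frobenius method, and Beukers--Heckman, which amounts to the same computation. Your normalization of the hypergeometric variable (vanishing at the cusp $\gamma=0$, with the reflection point at $\gamma=-1$) is exactly the one consistent with the lemma's assignment of the unipotent matrix $\Gamma_0$ to the cusp, so the exponent bookkeeping comes out right.
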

\begin{proof} The periods of the family $\calX_1$ were explicitly computed by Doran and Malmendier \cite[Lemma 6.6]{cymrsrmt}, who found that they are given by the hypergeometric function $\mbox{}_3F_2(\frac{1}{6},\frac{1}{2},\frac{5}{6};1,1|j)$ (recall here that $j = -\frac{1}{\gamma}$; see Remark \ref{rem:jgamma}). From this, the global monodromy representation of the transcendental variation of Hodge structure $\calT(\calX_1)$ associated to $\calX_1$ can be computed using a theorem of Levelt \cite[Theorem 1.1]{leveltthesis} (see also \cite[Theorem 3.5]{mhff}). This computation gives the required forms for the local monodromy matrices.\end{proof}

Together with Proposition \ref{prop:2dimX1}, we can now compute the generalized homological invariant of any family given in the form \eqref{eq:2dimX1}. This will be particularly useful when we come to study singular fibres in Section \ref{sec:singularfibres}.

\begin{proposition} \label{prop:generalmonodromy} Let $\beta'(\delta)$ and $\gamma(\delta)$ be meromorphic functions on the open complex unit disc $\Delta$ such that $\beta'(\delta) \notin \{0,\infty\}$ and $\gamma(\delta) \notin \{0,-1,\infty\}$ for all $\delta$ in the punctured disc $\Delta^* = \Delta \setminus \{0\}$. Suppose that $\beta'(0)$ takes a value in $\{0,\infty\}$ with order $b \geq 0$, and $\gamma(0)$ takes a value in $\{0,-1,\infty\}$ with order $d \geq 0$. Then the transcendental monodromy of the $M_1$-polarized family of K3 surfaces defined over $\Delta^*$ by Equation \eqref{eq:2dimX1} is conjugate to $(-\mathrm{Id})^b\Gamma_{\gamma(0)}^d$, where $\Gamma_{\gamma(0)}$ is defined as in Lemma \ref{lem:X1monodromy}.
\end{proposition}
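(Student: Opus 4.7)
The plan is to realize the family as a pullback. The family defined over $\Delta^{*}$ by Equation \eqref{eq:2dimX1} with parameters $(\beta'(\delta),\gamma(\delta))$ is the pullback under
\[\phi\colon \Delta^{*} \longrightarrow U,\quad \delta \longmapsto (\beta'(\delta),\gamma(\delta)),\]
of the two-parameter family $\calX \to U$ from Proposition \ref{prop:2dimX1}, where $U$ is the open locus $\{\beta'\neq 0,\infty,\ \gamma\notin\{0,-1,\infty\}\}$. Since pullback of families pulls back monodromy representations, the transcendental monodromy along a generator $\sigma \in \pi_{1}(\Delta^{*},\delta_{0})$ equals the monodromy of $\calX$ along $\phi_{*}(\sigma)\in \pi_{1}(U,\phi(\delta_{0}))$, for a basepoint $\delta_{0}$ close to $0$.

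First I would carry out a local analysis near the boundary point $(\beta'(0),\gamma(0))$. Choosing local coordinates that replace $\beta'$ by $1/\beta'$ if $\beta'(0)=\infty$, and similarly for $\gamma$ if $\gamma(0)=\infty$, a small polydisc neighborhood minus the two boundary divisors retracts onto a product of two punctured discs. Its fundamental group is therefore $\bZ\oplus\bZ$, freely generated by small loops $\alpha$ and $\tau$ around the $\beta'$- and $\gamma$-divisors respectively. From the local expansions $\beta'(\delta) = c\,\delta^{\pm b}(1+O(\delta))$ and $\gamma(\delta)-\gamma(0) = c'\,\delta^{\pm d}(1+O(\delta))$ (with signs depending on whether $0$ or $\infty$ is attained), a standard winding-number calculation shows that $\phi_{*}(\sigma)$ is homotopic to $\alpha^{\pm b}\tau^{\pm d}$.

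Next I would compute the monodromy of $\calX$ on each factor separately. By the final assertion of Proposition \ref{prop:2dimX1}, the monodromy of $\calX$ along $\alpha$ acts on $T(X_{p})\cong M_{1}^{\perp}$ as $-\mathrm{Id}$. For $\tau$, I would restrict $\calX$ to a slice where $\beta'$ is fixed to a constant value; the paragraph preceding Proposition \ref{prop:X1} shows that, after rescaling $x$, this restriction is isomorphic to the one-parameter family $\calX_{1}$. Lemma \ref{lem:X1monodromy} then identifies the monodromy along $\tau$ with $\Gamma_{\gamma(0)}$. Combining, and using that $-\mathrm{Id}$ is central in $\mathrm{O}(M_{1}^{\perp})$ (so the order of factors is immaterial) and equal to its own inverse (so sign ambiguities in the exponent of $\alpha$ disappear), I obtain that the total monodromy is conjugate to $(-\mathrm{Id})^{b}\Gamma_{\gamma(0)}^{d}$.

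The main obstacle I anticipate is the careful bookkeeping of orientations and inverses: at the boundary points $\beta'=\infty$ or $\gamma=\infty$ the natural loop direction is opposite to that in the finite chart, and one must verify that replacing $\Gamma_{\gamma(0)}^{d}$ by $\Gamma_{\gamma(0)}^{-d}$ yields an element conjugate to the same matrix (which it does: the conjugacy class of $\Gamma_{\gamma(0)}$ in $\mathrm{O}(M_{1}^{\perp})$ contains its inverse, as is visible from the block forms in Lemma \ref{lem:X1monodromy}). Once this is resolved, the proof is essentially topological bookkeeping layered on top of the two inputs provided by Proposition \ref{prop:2dimX1} and Lemma \ref{lem:X1monodromy}.
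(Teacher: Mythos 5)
Your proposal is correct and follows essentially the same route as the paper's own (much terser) proof: pull back the two-parameter family of Proposition \ref{prop:2dimX1} along $\delta \mapsto (\beta'(\delta),\gamma(\delta))$, read off the monodromy around the $\beta'$-divisor from Proposition \ref{prop:2dimX1} and around the $\gamma$-divisor from Lemma \ref{lem:X1monodromy} via the constant-$\beta'$ slice, and combine using the centrality of $-\mathrm{Id}$. Your additional bookkeeping (the local $\bZ\oplus\bZ$ fundamental group, winding numbers, and the observation that each $\Gamma_{\gamma(0)}^{d}$ is conjugate to its inverse) merely makes explicit details the paper leaves implicit.
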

\begin{proof} Proposition \ref{prop:2dimX1} tells us that the $2$-parameter family defined by Equation \eqref{eq:2dimX1} has transcendental monodromy $-\mathrm{Id}$ around $\beta' \in \{0,\infty\}$, and it follows from Lemma \ref{lem:X1monodromy} that its transcendental monodromy around $\gamma \in \{0,-1,\infty\}$ is given by the corresponding matrix $\Gamma_{0}$, $\Gamma_{-1}$, $\Gamma_{\infty}$. The result then follows by pulling-back by the map defined by $(\beta'(\delta),\gamma(\delta))$, noting that $(-\mathrm{Id})$ commutes with $\Gamma_{\gamma(0)}$.
\end{proof}

Now we can show that Equation \eqref{eq:2dimX1} is a kind of ``Weierstrass form'' for threefolds fibred by $M_1$-polarized K3 surfaces.

\begin{theorem} \label{thm:weierstrassform} Let $\pi\colon \calX \to B$ be a threefold fibred non-isotrivially by $M_1$-polarized K3 surfaces. Then $\calX$ is birational over $B$ to a threefold $\overline{\pi}\colon\overline{\calX} \to B$ defined by Equation \eqref{eq:2dimX1} where $\beta'$ and $\gamma$ are  meromorphic functions on $B$.
\end{theorem}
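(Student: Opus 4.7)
The plan is to apply Theorem \ref{thm:Lclassification} over the open locus $U \subseteq B$ on which $\calX$ is a smooth $M_1$-polarized family. For $\gamma$, the natural choice is the extended generalized functional invariant $\overline g \colon B \to \overline{\calM}_{M_1}$, regarded as a meromorphic function on $B$ via the coordinate $\gamma$ on $\overline{\calM}_{M_1}$ introduced earlier in this section. By Proposition \ref{prop:2dimX1}, the Weierstrass form \eqref{eq:2dimX1} built from this $\gamma$ together with any generically nonzero meromorphic $\beta'$ has $\gamma$ as its generalized functional invariant wherever it is smooth, so it automatically matches that of $\calX$.

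The remaining task is to choose $\beta'$ so that the resulting family $\overline{\calX}$ has the same generalized homological invariant as $\calX$. Starting from the constant choice $\beta' \equiv 1$ and denoting the resulting homological invariant by $\rho_0$, Proposition \ref{prop:plusminus} shows that $\chi := \rho_{\calX}\,\rho_0^{-1}$ is a sign character $\pi_1(U,p) \to \{\pm 1\}$. Proposition \ref{prop:generalmonodromy} tells us further that replacing the constant by a general meromorphic $\beta'$ multiplies the monodromy of $\rho_0$ around each point $b \in B$ by $(-\mathrm{Id})^{\operatorname{ord}_b(\beta')}$. So, writing $\chi(\gamma_b) = (-1)^{\epsilon_b}$ for each singular point $b$ of $\calX \to B$, the task reduces to finding a meromorphic $\beta' \in k(B)^*$ with $\operatorname{ord}_b(\beta') \equiv \epsilon_b \pmod 2$ at every such $b$.

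The main technical step is this existence question. Let $S$ denote the finite singular locus of $\calX \to B$ and set $D_0 := \sum_{b \in S} \epsilon_b [b]$. Since the parities away from $S$ are unconstrained, it suffices to find a divisor $E$ supported off $S$ such that $D_0 + E$ is principal on $B$. This is an instance of the classical fact that the natural map $\operatorname{Div}(B \setminus S) \to \operatorname{Pic}(B)$ is surjective: every divisor class may be moved off any finite set by linear equivalence. The resulting threefold $\overline{\calX}$ may acquire extra singular fibres over $\operatorname{supp}(E)$, but on the open dense subset $V := U \setminus \operatorname{supp}(E)$, both $\calX|_V$ and $\overline{\calX}|_V$ are smooth $M_1$-polarized families whose generalized functional and homological invariants agree by construction. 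Theorem \ref{thm:Lclassification} then identifies them as $M_1$-polarized families over $V$, producing the desired birational equivalence $\calX \dashrightarrow \overline{\calX}$ over $B$.
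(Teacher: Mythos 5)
Your overall strategy coincides with the paper's: take $\gamma$ from the extended generalized functional invariant, compare $\rho_{\calX}$ with the homological invariant $\rho_0$ of the constant-$\beta'$ family (the pullback of $\calX_1$), and correct the sign discrepancies by prescribing the parities of $\operatorname{ord}_b(\beta')$, concluding via Theorem \ref{thm:Lclassification}. However, the existence step has a genuine gap, in two places. First, the set of points where a sign flip is needed is \emph{not} contained in the singular locus $S$ of $\pi$: by Lemma \ref{lem:X1monodromy} and Proposition \ref{prop:generalmonodromy}, the constant-$\beta'$ family has monodromy $\Gamma_{\infty}^{d}=-\mathrm{Id}$ around a point where $\gamma$ has a pole of order $d\equiv 3\pmod 6$, while $\calX$ may be a smooth family with smooth fibre there (this is the $\mathrm{I}_0$ fibre at $\gamma=\infty$ arising from Equation \eqref{eq:inf2} in Proposition \ref{prop:infinityfibresfull}). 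At such a point your character $\chi$ equals $-1$, but the point lies outside $S$, so your construction imposes no condition and $\beta'$ will generically have even order there, leaving a monodromy mismatch. This is precisely why the paper's $\Sigma$ also contains the smooth fibres lying over $\gamma\in\{0,-1,\infty\}$.

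Second, and more seriously, the claim that ``the parities away from $S$ are unconstrained,'' and hence that an arbitrary correcting divisor $E$ supported off $S$ may be used, is false. If $E$ has an odd coefficient at a point $q$ over which $\calX$ is a smooth family, then by Proposition \ref{prop:generalmonodromy} the resolved Weierstrass family has monodromy $-\mathrm{Id}$ around $q$, whereas $\calX$ has trivial monodromy there. Deleting $q$ from the base does not hide this: $\pi_1(V)$ for $V=U\setminus\operatorname{supp}(E)$ contains the simple loop around $q$, on which the two homological invariants disagree, so the families are \emph{not} isomorphic over $V$ (and since the homological invariant on the maximal smooth locus is a birational invariant over $B$, such an $\overline{\calX}$ is not birational to $\calX$ over $B$ at all). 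To repair this you must take $\beta'$ of even order at every point where no flip is required, i.e.\ you need $D_0+2E'$ principal for some divisor $E'$, and for compact $B$ this forces exactly the global statement you have sidestepped: the number of flip points (the degree of $D_0$) is even. That is the paper's key step, proved by noting that $\sigma\mapsto\det(\rho_{\calX}(\sigma))\det(\rho_{\calY}(\sigma))$ is a character of $\pi_1(U)$ taking the value $-1$ precisely on simple loops around the flip points, which forces their number to be even; once this is known, divisibility of $\operatorname{Pic}^0(B)$ yields a $\beta'$ with odd order exactly at the flip points and even order elsewhere, and your final appeal to Theorem \ref{thm:Lclassification} then goes through.
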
 

\begin{proof} Let $\overline{g} \colon B \to \overline{\calM}_{M_1}$ denote the extended generalized functional invariant map of $\calX$. Composing with the parametrization $\gamma$ of $\overline{\calM}_{M_1}$ gives the required map $\gamma\colon B \to \bP^1$.

It remains to define the map $\beta'$. Let $\Sigma \subset B$ denote the closed set over which $\calX$ is either singular, has a singular fibre, or has a smooth fibre with $\gamma \in \{0,-1,\infty\}$, and let $U := B \setminus \Sigma$ denote its open complement. Let $g \colon U \to \calM_{M_1}$ denote the generalized functional invariant and let $\rho_{\calX}$ denote the generalized homological invariant. 

Now let $\calY:=g^*\calX_1$ denote the pull-back of the family $\calX_1 \to {\calM}_{M_1}$ to $U$ and let $\rho_{\calY}$ denote its generalized homological invariant. Note that $\calY$ may be written in the form of Equation \eqref{eq:2dimX1}, with $\gamma$ as above and $\beta' \notin \{0,\infty\}$ a constant. We proceed by comparing $\rho_{\calX}$ with $\rho_{\calY}$.

Let $D \subset \Sigma$ denote the finite set of points $p$ where $\rho_{\calX}(\sigma) \neq \rho_{\calY}(\sigma)$ for a simple loop $\sigma$ around $p$; by Proposition \ref{prop:plusminus}, we must therefore have $\rho_{\calX}(\sigma) = -\rho_{\calY}(\sigma)$ for all such points. The map $\varphi\colon \pi_1(U)\to \{\pm 1\}$ defined by $\sigma \to \det(\rho_{\calX}(\sigma))\det(\rho_{\calY}(\sigma))$ is a permutation representation which takes the value $-1$ if $\sigma$ is a simple loop around a point in $D$ and $1$ for simple loops around other points; consequently, $D$ must contain an even number of points. Therefore, there exists a meromorphic function $\beta'$ on $B$ which has either a simple pole or a simple zero at each point of $\Sigma$, and no other poles or zeros.

Let $\overline{\pi}\colon \overline{\calX} \to B$ denote the family defined by Equation \eqref{eq:2dimX1} with these functions $\beta'$ and $\gamma$, and let $\widetilde{\calX}$ denote its minimal resolution. By construction $\calX$ and $\widetilde{\calX}$ have the same generalized functional invariant and, by Proposition \ref{prop:generalmonodromy}, the same homological invariant. By  Theorem \ref{thm:Lclassification}, $\calX$ and $\overline{\calX}$ are isomorphic over $U$, hence birational over $B$.
\end{proof}

\begin{definition} Let $\pi\colon \calX \to B$ be a threefold fibred non-isotrivially by $M_1$-polarized K3 surfaces. A threefold $\overline{\pi}\colon \overline{\calX} \to B$ defined by Equation \eqref{eq:2dimX1}, with $\beta'$ and $\gamma$ meromorphic functions on $B$, which is birational to $\calX$ over $B$ is called a \emph{Weierstrass form} of $\pi\colon \calX \to B$. In a slight abuse of terminology, we will often call $\gamma$ the \emph{generalized functional invariant} of $\overline{\pi}\colon \overline{\calX} \to B$.
\end{definition}

From Theorem \ref{thm:weierstrassform}, such Weierstrass forms always exist.

\subsection{General fibres}\label{sec:genfibres}

In this subsection, we study the general fibres of the $2$-parameter family \eqref{eq:2dimX1} from Proposition \ref{prop:2dimX1} and show how to resolve their singularities. This allows us to explicitly identify the $M_1$-polarizations on these fibres and study some of their geometry. 

We begin with a quartic surface defined by Equation \eqref{eq:2dimX1} for some $\gamma \notin \{0,-1,\infty\}$ and $\beta' \notin \{0,\infty\}$. All of the singularities in this surface occur on the locus $\{w = 0\}$, which consists of the three lines $L_x := \{ w = x = 0\}$, $L_y := \{w = y = 0\}$, and $L_z := \{w = z = 0\}$. Singularities appear at the four points:
\begin{itemize}
\item $P_1:= \{w = x = y =0\}$, which is an $A_7$ singularity;
\item $P_2:= \{w = x = z = 0\}$, which is an $A_7$ singularity;
\item $P_3:= \{w = y = z = 0\}$, which is an $A_3$ singularity; and
\item $P_4 := \{w = x = y+z = 0\}$, which is an $A_1$ singularity.
\end{itemize}

These are all Du Val singularities, which may be resolved in the usual way. After doing so, we obtain a K3 surface containing a configuration of rational $(-2)$-curves with the dual graph shown in Figure \ref{fig:dualgraph}. In this picture, the $E_i$ (resp. $F_i$) are exceptional curves arising from the resolution of the $A_7$ singularity at $P_1$ (resp. $P_2$), the $D_i$ are exceptional curves arising from the resolution of the $A_3$ singularity at $P_3$, and $C$ is the exceptional curve arising from the resolution of the $A_1$ singularity at $P_4$.

\begin{figure}
\begin{tikzpicture}[scale=0.8,thick]
\draw (1,3)--(1,1);
\draw (0,1)--(14,1);
\draw(13,3)--(13,1);
\draw(1,3)--(7,3)--(13,3);
\draw(7,1)--(7,2);

\node[below] at (0,1) {$E_1$};
\node[below] at (1,1) {$E_2$};
\node[below] at (2,1) {$E_3$};
\node[below] at (3,1) {$E_4$};
\node[below] at (4,1) {$E_5$};
\node[below] at (5,1) {$E_6$};
\node[below] at (6,1) {$E_7$};
\node[below] at (7,1) {$L_x$};
\node[below] at (8,1) {$F_7$};
\node[below] at (9,1) {$F_6$};
\node[below] at (10,1) {$F_5$};
\node[below] at (11,1) {$F_4$};
\node[below] at (12,1) {$F_3$};
\node[below] at (13,1) {$F_2$};
\node[below] at (14,1) {$F_1$};

\node[left] at (1,2) {$L_y$};
\node[left] at (1,3) {$D_1$};
\node[above] at (7,3) {$D_2$};
\node[right] at (13,3) {$D_3$};
\node[right] at (13,2) {$L_z$};

\node[left] at (7,2) {$C$};

\filldraw[black]
(1,3) circle (2pt)
(1,2) circle (2pt)
(1,1) circle (2pt)
(0,1) circle (2pt)
(2,1) circle (2pt)
(3,1) circle (2pt)
(4,1) circle (2pt)
(5,1) circle (2pt)
(6,1) circle (2pt)
(7,1) circle (2pt)
(8,1) circle (2pt)
(9,1) circle (2pt)
(10,1) circle (2pt)
(11,1) circle (2pt)
(12,1) circle (2pt)
(13,1) circle (2pt)
(14,1) circle (2pt)
(13,2) circle (2pt)
(13,3) circle (2pt)
(7,2) circle (2pt)
(7,3) circle (2pt)
;

\end{tikzpicture}
\caption{Dual graph of the configuration of $(-2)$-curves lying in a general fibre.}
\label{fig:dualgraph}
\end{figure}

From this diagram, we can clearly see the four types of elliptic fibration on an $M_1$-polarized K3 surface (see \cite[Remark 7.11]{mslpk3s}). Note that there are three fibrations of each type (with the exception of type (4)), induced by the threefold rotational symmetry of Figure \ref{fig:dualgraph}.
\begin{enumerate}
\item $F := 2D_1 + 4L_y + 3E_1 + 6E_2 + 5E_3+4E_4+3E_5+2E_6+E_7$ and $F' := 2D_2 + 4L_z+ 3F_1 + 6F_2 + 5F_3+4F_4+3F_5+2F_6+F_7$ are a pair of fibres of type $\mathrm{II}^*$ and $C$ is one component of a fibre of type $\mathrm{I}_2$ (the other component is not part of this configuration). The line $L_x$ is a section and $D_2$ a bisection. This is the ``standard fibration'' of Clingher and Doran \cite[Proposition 3.10]{milpk3s}. Classes of sections and fibres in this fibration give rise to the $M_1 := H \oplus E_8 \oplus E_8 \oplus A_1$ polarization; from this we see that the $19$ classes $E_1,\ldots,E_6$, $F_1,\ldots,F_6$, $L_x$, $L_y$, $L_z$, $D_1$, $D_3$, $F$, and $F - C$ are a primitive set of generators for $M_1$. Coupled with the invariance under monodromy of the configuration from Figure \ref{fig:dualgraph}, this gives another proof of the fact (Proposition \ref{prop:X1}) that the family $\calX_1$ is an $M_1$-polarized family of K3 surfaces.
\item $L_y + E_1 + 2(E_2 + \cdots + E_7 + L_x + F_7 + \cdots + F_2) + F_1 + L_z$ is a fibre of type $\mathrm{I}_{12}^*$  and $D_2$ is one component of a fibre of type $\mathrm{I}_2$ (the other component is not part of this configuration). The curves $D_1$ and $D_3$ are sections and $C$ is a bisection. This is the ``alternate fibration'' of Clingher and Doran \cite[Proposition 3.10]{milpk3s}.
\item $D_2 + 2D_1 + 3L_y + 4E_2 + 3E_3 + 2 E_4 + E_5 + 2E_1$ is a fibre of type $\mathrm{III}^*$ and $L_z + F_1 + 2(F_2 + F_3+ F_4 + F_5 + F_6 + F_7 + L_x) + E_7 + C$ is a fibre of type $\mathrm{I}_6^*$. The curves $D_3$ and $E_6$ are sections.
\item $E_2 + \cdots + E_7 + L_x + F_7 + \ldots + F_2 + L_z + D_3 + D_2 + D_1 + L_y$ is a fibre of type $\mathrm{I}_{18}$, and $E_1$, $C$, $F_1$ are sections.
\end{enumerate}

We conclude this subsection with a brief discussion of involutions. There are two obvious involutions on the family \eqref{eq:2dimX1}, which both lift to involutions on the general fibres of a minimal resolution. The first is the involution $x \mapsto -x$. On a smooth $M_1$-polarized K3 surface, this is the antisymplectic involution that preserves the $M_1$-polarization and acts as $-\mathrm{Id}$ on the transcendental lattice (c.f. Remark \ref{rem:antisymplectic}).

More interestingly, there is also a second involution, exchanging $y$ and $z$. This involution lifts to a symplectic involution $\iota$ on a smooth $M_1$-polarized K3 surface $S$, which lies in the Mordell-Weil group of the ``alternate'' elliptic fibration (2). Work of Clingher and Doran  \cite[Proposition 3.13]{milpk3s} shows that $\iota$ defines a canonical Shioda-Inose structure (named for Shioda and Inose \cite{sk3s}, who first studied such structures) on $S$.

If we quotient $S$ by the involution $\iota$ and resolve the resulting singularities, \cite[Proposition 3.13]{milpk3s} and \cite[Section 3.2]{nfk3smmp} show that the resulting surface is the Kummer surface $\mathrm{Kum}(E \times E)$ associated to the product of an elliptic curve $E$ with itself. Moreover, the $j$-invariant of $E$ is given by $j(E) = -\frac{1}{\gamma}$, where $\gamma$ is the generalized functional invariant of $S$ (c.f.  Remark \ref{rem:jgamma}).

This process is reversible and canonically associates an elliptic curve $E$ to every $M_1$-polarized K3 surface. Furthermore, this construction can also be made to work for $M_1$-polarized families of K3 surfaces $\calX^U \to U$ (see \cite{cytfksapec}), although there are serious difficulties inherent in extending it further to singular fibres. The upshot is that any $M_1$-polarized family of K3 surfaces has a canonically associated elliptic surface; this explains the many parallels between the theory presented in this paper and the well-known theory of elliptic surfaces.

\subsection{An intrinsic definition of the Weierstrass form}\label{sec:canonical}

In this section we prove the following proposition, which provides an intrinsic derivation of the Weierstrass form \eqref{eq:2dimX1} that comes directly from the structure of the lattice $M_1$, rather than from abstract mirror considerations.

\begin{proposition} \label{prop:canonicalmodel} Let $S$ be an $M_1$-polarized K3 surface. By examining the elliptic fibrations on $S$ we may identify the $21$ curves from Figure \ref{fig:dualgraph}, up to the action of the symmetric group $S_3$. Such an identification induces a morphism $\phi\colon S \to \mathbb{P}^3[w,x,y,z]$, unique up to multiplication of $x$ by a nonzero scalar. If the generalized functional invariant of $S$ is not equal to $0$ or $\infty$, the image $\phi(S)$ is given by a quartic equation in the Weierstrass form \eqref{eq:2dimX1}.
\end{proposition}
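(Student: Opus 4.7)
The plan is in four steps: recover the $21$ curves of Figure \ref{fig:dualgraph} intrinsically; construct a divisor class $\calH$ with $\calH^2=4$ and $h^0(\calH)=4$; exhibit four distinguished sections $w,x,y,z \in H^0(\calH)$; and verify that the resulting quartic image is in Weierstrass form.

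For the first two steps, by \cite[Proposition 3.10]{milpk3s} the standard elliptic fibration (type (1) of Section \ref{sec:genfibres}) is canonical up to automorphisms of $M_1$; it exhibits two $\mathrm{II}^*$ fibres (18 curves), a single $(-2)$-component $C$ of an $\mathrm{I}_2$ fibre, a section $L_x$, and a bisection $D_2$. The remaining elliptic fibrations fill in the rest of the labels, with the residual $S_3$-ambiguity permuting the three spokes of Figure \ref{fig:dualgraph}. Now I would define
\[
\calH := 2L_x + L_y + L_z + E_1 + F_1 + 2\sum_{i=2}^{7}(E_i + F_i) + D_1 + D_2 + D_3 + C.
\]
A direct calculation from Figure \ref{fig:dualgraph} yields $\calH \cdot L_x = \calH \cdot L_y = \calH \cdot L_z = 1$, $\calH \cdot E = 0$ for each of the $18$ remaining curves, and hence $\calH^2 = 4$. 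Nonnegativity on the $21$ classes generating the $M_1$-effective cone shows $\calH$ is nef; Riemann--Roch together with the Saint-Donat criterion then give $h^0(\calH) = 4$ and base-point-freeness of $|\calH|$, so we obtain a morphism $\phi : S \to \bP^3$ birational onto a quartic $\bar S$.

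For the third step, the antisymplectic involution $\iota_{\mathrm{anti}}$ from Remark \ref{rem:antisymplectic} fixes each class in $M_1$ and hence each of the $21$ curves, so it acts on $H^0(\calH)$ with a $\pm 1$-eigenspace decomposition. I would let $w$ be a section with zero divisor $\calH$ itself, and $y, z$ sections with zero divisors $4 \phi^* L_y$ and $4 \phi^* L_z$ respectively (both in $|\calH|$ because all hyperplane sections of $\bar S$ are linearly equivalent). These three sections are linearly independent $+1$-eigenvectors. Since $\phi$ is birational onto a quartic of K3 type, $\iota_{\mathrm{anti}}$ cannot descend to the identity on $\bar S$, so the $-1$-eigenspace is nonzero; a dimension count then forces it to be one-dimensional, spanned by the required section $x$. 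The scalar ambiguities in $w, y, z$ can be absorbed by rescaling the $\bP^3$-coordinates, leaving only the scalar freedom on $x$.

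For the fourth step, the divisor-theoretic characterization yields $\{w=0\} \cap \bar S = 2L_x + L_y + L_z$, $\{y=0\} \cap \bar S = 4 L_y$, and $\{z=0\} \cap \bar S = 4 L_z$. Enumerating quartic monomials in $\bP^3[w,x,y,z]$ and imposing these three conditions forces the defining equation of $\bar S$ to be a linear combination of $w^4$, $w^2 yz$, $wy^2 z$, $wyz^2$, $wxyz$, and $x^2yz$. The anti-invariance of $x$ under $\iota_{\mathrm{anti}}$ kills the $wxyz$ term, and the scalar freedoms in $w, y, z$ combined with the $y \leftrightarrow z$ symmetry of the configuration normalize the coefficients of $w^2 yz$, $wy^2 z$, $wyz^2$ to be equal, yielding precisely Equation \eqref{eq:2dimX1}. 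The hard part will be this final normalization: rigorously showing that the three coefficients can be set to $1$ simultaneously via the residual scalar freedoms rather than just related up to a common factor. The hypothesis that the generalized functional invariant is neither $0$ nor $\infty$ ensures the coefficient of $w^4$ is nonzero, making $\gamma$ finite and well-defined.
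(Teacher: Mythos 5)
Your divisor $\calH$ is exactly the paper's divisor $W$, and your monomial elimination at the end is the same as the paper's, so the skeleton is right; but there are genuine gaps at the two places where the real work happens. First, you never prove that $|\calH|$ actually contains members supported on the other curve configurations, i.e.\ the linear equivalences $W \sim Y \sim Z$ (and $W \sim X$), where $Y = 4L_y + 3D_1 + 2D_2 + D_3 + 3E_1 + 6E_2 + 5E_3 + 4E_4 + 3E_5 + 2E_6 + E_7$ and similarly for $Z$. Your proposed zero divisors $4L_y$ and $4L_z$ are not even in $|\calH|$ (they have self-intersection $-32$, not $4$): the correct members must carry the contracted chains with multiplicities, and your justification (``all hyperplane sections of $\bar S$ are linearly equivalent'') is circular, since the morphism $\phi$ does not exist until the sections $y,z$ have been produced. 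The paper obtains these equivalences from the elliptic fibrations: $W-Y$ is the difference of the $\mathrm{I}_6^*$ and $\mathrm{III}^*$ fibres of fibration (3), $Y-Z$ the difference of the two $\mathrm{II}^*$ fibres of fibration (1), and $X \sim W$ via the $\mathrm{I}_{18}$ fibre of fibration (4); without some such argument your sections $y$, $z$ (and $x$) simply do not exist. Relatedly, your nefness/base-point-freeness argument rests on the $21$ classes generating the effective cone, which is unproven and dubious (for instance the second component of the $\mathrm{I}_2$ fibre of fibration (1) is a $(-2)$-curve outside the configuration); the paper instead deduces global generation from the fact that the four members $W$, $X$, $Y$, $Z$ have no common point.

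Second, two issues in your endgame. Using the antisymplectic involution to pin down $x$ and kill the $wxyz$ term is a reasonable alternative to the paper's explicit substitution $x \mapsto x - \tfrac{a_3}{2a_6}w$, but the paper only guarantees that involution for a \emph{general} $M_1$-polarized K3 (Remark \ref{rem:antisymplectic}), whereas the proposition concerns an arbitrary one, and your claim that $w$, $y$, $z$ all lie in the $+1$-eigenspace is asserted rather than proved. More seriously, you misplace where the hypothesis on the generalized functional invariant enters: the coefficient of $w^4$ is nonzero for purely geometric reasons (the restriction of $\phi(S)$ to $\{y=0\}$ is $\{w^4=0\}$), whereas reaching the form \eqref{eq:2dimX1} requires the coefficient of $w^2yz$ after removing $wxyz$, namely $a_2 - a_3^2/4a_6$ (which becomes $\beta'$ after rescaling), to be nonzero. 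You never address its possible vanishing; that is precisely the degenerate case handled by Lemma \ref{lem:gammainfinity} and excluded by $\gamma \neq \infty$. By contrast, the simultaneous normalisation of the three coefficients of $w^2yz$, $wy^2z$, $wyz^2$ that you flag as ``the hard part'' is routine torus rescaling once those coefficients are known to be nonzero.
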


\begin{remark} This result holds for individual $M_1$-polarized K3 surfaces $S$. Non-uniqueness of the variable $x$ means that there are significant technical obstructions to using this method to prove a result that works in families, akin to Theorem \ref{thm:weierstrassform}. This indeterminacy corresponds to the fact that we may freely rescale $x \mapsto tx$ and $\beta' \mapsto \frac{\beta'}{t^2}$ for $t \in \bC^*$ in Equation \eqref{eq:2dimX1} and obtain the same K3 surface.
\end{remark}

\begin{proof} Choose an identification of the $21$ curves from Figure \ref{fig:dualgraph} on $S$. Then consider the three divisors
\begin{align*}W &:= 2L_x + L_y + L_z + C + \sum_{i=1}^3D_i + E_1 + 2 \sum_{i=2}^7E_i + F_1 + 2 \sum_{i=2}^7 F_i, \\
Y &:= 4L_y + 3D_1 + 2D_2 + D_3 + 3E_1 + 6E_2 + 5E_3 + 4E_4 + 3E_5 + 2E_6 + E_7,\\
Z &:= 4L_z + D_1 + 2D_2 + 3D_3 + 3F_1 + 6F_2 + 5F_3 + 4F_4 + 3F_5 + 2F_6 + F_7.\\
\end{align*}
From the linear equivalence of the $\mathrm{III}^*$ and $\mathrm{I}_6^*$ fibres in elliptic fibration (3),  $W$ is linearly equivalent to $Y$, and from the linear equivalence of the two $\mathrm{II}^*$ fibres in elliptic fibration (1), $Y$ is linearly equivalent to $Z$. Thus $W$, $Y$, and $Z$ are cut out by linearly independent sections $w$, $y$, and $z$ of a common line bundle $\calL$ and these sections are unique up to scalar multiples.

A Riemann-Roch computation shows that $h^0(S,\calL) = 4$. We can extend $\langle w,y,z\rangle$ to a complete set of generators by adding an additional section. Let $X$ denote the divisor
\[X:= L_x + C + \sum_{i=1}^7E_i + \sum_{i=1}^7F_i + G,\]
where $G$ is an arbitrary smooth elliptic curve in the elliptic fibration (4). From the linear equivalence between $G$ and the $I_{18}$ fibre in fibration (4), we see that $X$ is linearly equivalent to $W$. The divisor $X$ is cut out by a section $x \in H^0(S,\calL)$ which is linearly independent of $\langle w,y,z\rangle$ and uniquely determined up to scalar multiples and transformations of the form $x + \mu w$, with $\mu \in \bC$ (the choice of $\mu$ is equivalent to choice of the curve $G$, which is not uniquely determined by the $M_1$-polarization).

The divisors $W$, $X$, $Y$, $Z$ contain no common points, so the line bundle $\calL$ is generated by its global sections. By Riemann-Roch we see that $\calL$ induces a map $\phi\colon S \to \bP^3$ whose image is a quartic hypersurface, and we may identify $w$, $x$, $y$, $z$ with homogeneous coordinates. The map $\phi$ satisfies:
\begin{enumerate}[(i)]
\item the divisors $C$, $D_i$, $E_j$, and $F_k$ are contracted for all $i,j,k$;
\item the image of $W$ is the hyperplane $\{w = 0\}$, and the intersection of $\phi(S)$ with this hyperplane is $\{x^2yz = 0\}$;
\item the images of $Y$ and $Z$ are the hyperplanes $\{y=0\}$ and $\{z=0\}$ respectively, and the restriction of $\phi(S)$ to either hyperplane is $\{w^4 = 0\}$;
\item the image of $X$ is the hyperplane $\{x = 0\}$ and the restriction of $\phi(S)$ to this hyperplane is the union of the line $\{w = 0\}$ and a smooth cubic curve.
\end{enumerate}
Using these properties, we find that $\phi(S)$ is a hypersurface with the general form
\[a_1w^4 + a_2w^2yz + a_3wxyz + a_4wy^2z + a_5wyz^2 + a_6x^2yz = 0,\]
where $a_i \in \bC$. Moreover, (ii) above gives $a_6 \neq 0$, (iii) gives $a_1 \neq 0$, and (iv) gives $a_4,a_5 \neq 0$. Using this, the coordinate change $x \mapsto x - \frac{a_3}{2a_6}w$ (which uniquely specifies the elliptic curve $G$) gives
\begin{equation} \label{eq:phiimage} a_1w^4 + \big(a_2 - \tfrac{a_3^2}{4a_6}\big)w^2yz + a_4wy^2z + a_5wyz^2 + a_6x^2yz = 0.\end{equation}

If we assume that $a_2 - \tfrac{a_3^2}{4a_6} \neq 0$, then the change of coordinates  $(w,y,z) \mapsto (\frac{a_6w}{a_2a_6-\frac{1}{4}a_3^2},\frac{y}{a_4}, \frac{z}{a_5})$ and rescaling by $\frac{a_4a_6}{a_6}(a_2a_6-\frac{1}{4}a_3^2)$ gives
\[ \frac{a_1a_4a_5a_6^3}{(a_2a_6 - \tfrac{1}{4}a_3^2)^3} w^4 + w^2yz + wy^2z + wyz^2 + (a_2a_6 - \tfrac{1}{4}a_3^2)x^2yz = 0\]
This is clearly in the form of Equation \eqref{eq:2dimX1}, with $\beta'$ and $\gamma$ given by
\[\beta' = (a_2a_6 - \tfrac{1}{4}a_3^2) \notin \{0,\infty\}, \quad\quad \gamma =  \frac{a_1a_4a_5a_6^3}{(a_2a_6 - \tfrac{1}{4}a_3^2)^3} \notin \{0,\infty\}.\]
After this coordinate change the sections $w$, $y$, $z$ are defined uniquely, so the map $\phi$ is unique up to multiplication of $x$ by a nonzero scalar. 

It just remains to deal with the case $a_2 - \tfrac{a_3^2}{4a_6} = 0$, which is treated by the following lemma; compare also Proposition \ref{prop:infinityfibres2}, which describes the same surfaces in the context of degenerations.\end{proof}

\begin{lemma} \label{lem:gammainfinity} If $a_2 - \tfrac{a_3^2}{4a_6} = 0$, then $S$ has generalized functional invariant $\infty$ and $\phi(S)$ is isomorphic to the surface
\[\{\tfrac{1}{27}w^4 + wy^2z+wyz^2+x^2yz = 0\} \subset \bP^3[w,x,y,z].\]
\end{lemma}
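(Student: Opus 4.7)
The plan is to derive the stated normal form from equation \eqref{eq:phiimage} by a diagonal coordinate rescaling under the hypothesis $a_2 - \tfrac{a_3^2}{4a_6} = 0$, and then identify the generalized functional invariant of $S$ via an intrinsic order-$3$ symmetry of the resulting surface.

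First I would substitute $a_2 - \tfrac{a_3^2}{4a_6} = 0$ into \eqref{eq:phiimage} to obtain
\[ a_1 w^4 + a_4 w y^2 z + a_5 w y z^2 + a_6 x^2 yz = 0, \]
with $a_1, a_4, a_5, a_6 \in \bC^*$ by properties (ii)--(iv) in the proof of Proposition \ref{prop:canonicalmodel}. I would then apply a diagonal rescaling $(w,x,y,z) \mapsto (\lambda_w w, \lambda_x x, \lambda_y y, \lambda_z z)$ together with an overall scaling of the equation by $c$. The target coefficients $\tfrac{1}{27}, 1, 1, 1$ give four normalization conditions in the five free scalars, leaving a one-parameter family of solutions consistent with the indeterminacy $x \mapsto tx$ recorded in the remark following Proposition \ref{prop:canonicalmodel}. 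Concretely, the two middle normalizations force $\lambda_y/\lambda_z = a_4/a_5$; one then solves successively for $\lambda_w$, $c$, and $\lambda_x$ using the remaining conditions, with all roots realizable in $\bC$ precisely because $a_1, a_4, a_5, a_6 \neq 0$.

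For the functional invariant claim, I would observe that the target surface $\{\tfrac{1}{27}w^4+wy^2z+wyz^2+x^2yz=0\}$ admits the order-$3$ automorphism $(w,x,y,z)\mapsto(\omega w, \omega^2 x, y, z)$, where $\omega$ is a primitive cube root of unity: each of the four monomials in the defining polynomial acquires the uniform factor $\omega$. This automorphism preserves the divisors $W$, $X$, $Y$, $Z$ individually, and hence preserves the $M_1$-polarization constructed from them in the proof of Proposition \ref{prop:canonicalmodel}. Since $\overline{\calM}_{M_1}$ has a unique orbifold point of order $3$, located at $\gamma = \infty$, the existence of this polarization-preserving order-$3$ automorphism forces $S$ to sit over $\gamma = \infty$.

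As a backup, one can alternatively argue by degeneration using the explicit formula $\gamma = C a_1 a_4 a_5 a_6^3 / (a_2 a_6 - \tfrac{1}{4}a_3^2)^3$ (for some nonzero constant $C$) arising in the proof of Proposition \ref{prop:canonicalmodel}: a holomorphic one-parameter deformation of the $a_i$ with $a_2(t) a_6(t) - \tfrac{1}{4}a_3(t)^2 = t$ gives an $M_1$-polarized family over the punctured disc with $\gamma(t) \to \infty$, so continuity of the extended generalized functional invariant at the central fibre $S$ yields the conclusion. The hardest step in either route is justifying the claim that the relevant symmetry or limit genuinely lives at the level of $M_1$-polarized K3 surfaces, rather than merely at the level of the quartic equation; this requires checking that $\phi$ really does induce the $M_1$-polarization in a manner compatible with the symmetry (or family) under consideration.
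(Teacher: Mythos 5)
Your normal-form step is essentially the paper's own: with $a_2 - \tfrac{a_3^2}{4a_6} = 0$, Equation \eqref{eq:phiimage} reduces to $a_1w^4 + a_4wy^2z + a_5wyz^2 + a_6x^2yz = 0$ with all four coefficients nonzero, and a diagonal rescaling together with an overall scaling of the equation brings it to the stated form (the paper rescales so that $a_1a_4a_5a_6^3 = \tfrac{1}{27}$ and substitutes $(w,y,z)\mapsto(a_6w,\tfrac{y}{a_4},\tfrac{z}{a_5})$); note only the small slip that the two middle normalizations force $\lambda_y/\lambda_z = a_5/a_4$, not $a_4/a_5$, which does not affect solvability. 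Where you genuinely diverge is the claim $\gamma = \infty$: the paper simply reads this off from the displayed formula $\gamma = a_1a_4a_5a_6^3/(a_2a_6 - \tfrac{1}{4}a_3^2)^3$, whose denominator vanishes --- in effect the degeneration/continuity argument you relegate to a backup --- whereas your primary argument via the order-$3$ automorphism $(w,x,y,z)\mapsto(\omega w,\omega^2 x,y,z)$ is a different, more intrinsic route. It does work, but two checks you only gesture at are needed to make it bite: first, that the induced automorphism of the minimal resolution fixes the generating classes of the $M_1$-polarization pointwise (it preserves each coordinate plane, hence each of $L_x$, $L_y$, $L_z$ and each singular point, and it cannot flip any exceptional $A_n$-chain because the two ends of each chain meet different preserved lines); second, and more importantly, that it acts nontrivially on the transcendental lattice --- an order-$3$ polarization-preserving automorphism acting trivially on $M_1^{\perp}$ would impose no condition on the moduli point. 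The cleanest way to see the latter is via the residue description of the holomorphic $2$-form: the coordinate scaling has determinant $\omega^3 = 1$ while the quartic is multiplied by $\omega$, so the $2$-form is multiplied by a primitive cube root of unity, the induced element of the modular group stabilizing the period point has order $3$, and the moduli point must be the unique order-$3$ orbifold point $\gamma = \infty$. With these checks supplied your automorphism argument is a valid alternative; your backup limiting argument matches the paper's reasoning but, to be complete, needs the observation that the four $A_n$-sections resolve simultaneously, so that the central fibre of your one-parameter family is $S$ itself as a smooth $M_1$-polarized member of the family and the extended functional invariant at the centre computes its moduli point.
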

\begin{proof} The expression for $\gamma$ above shows that the generalized functional invariant of $S$ is $\infty$ in this case. To show that $\phi(S)$ has the required form, set $a_2 - \tfrac{a_3^2}{4a_6} = 0$ in Equation \eqref{eq:phiimage}, and rescale the resulting equation so that $a_1a_4a_5a_6^3 = \frac{1}{27}$. Changing coordinates $(w,y,z) \mapsto (a_6w,\tfrac{y}{a_4}, \tfrac{z}{a_5})$ and rescaling by $\frac{a_4a_5}{a_6}$ proves the lemma.
\end{proof}

\section{Singular fibres}\label{sec:singularfibres}

By Theorem \ref{thm:weierstrassform}, any threefold fibred non-isotrivially by $M_1$-polarized K3 surfaces has a Weierstrass model and, by the results of Section \ref{sec:genfibres} we know how to resolve the singularities in the generic fibres of such models. It remains to study the local behaviour around the singular fibres. In this section we will classify these singular fibres up to birational equivalence and exhibit special local models for each.

Let $\beta'(\delta)$ and $\gamma(\delta)$ be meromorphic functions on the open complex unit disc $\Delta$ such that $\beta'(\delta) \notin \{0,\infty\}$ and $\gamma(\delta) \notin \{0,-1,\infty\}$ for all $\delta$ in the punctured disc $\Delta^* = \Delta \setminus \{0\}$. Let $\overline{\pi}\colon \overline{\calX} \to \Delta$ be a threefold defined by Equation \eqref{eq:2dimX1} with $\beta' = \beta'(\delta)$ and $\gamma = \gamma(\delta)$ and let $\pi \colon \calX \to \Delta$ denote a minimal resolution of $\overline{\calX}$. Note that the fibres of $\calX$ over $\Delta^*$ are smooth $M_1$-polarized K3 surfaces.

Suppose that $\beta'$ ramifies to order $b$ and $\gamma$ ramifies to order $d$ at $\delta = 0$. It then follows from Theorem \ref{thm:Lclassification} and Proposition \ref{prop:plusminus} that, up to shrinking $\Delta$ and a birational map affecting only the fibre over $\delta = 0$, the threefold $\calX$ is determined by the following three pieces of data:
\begin{enumerate}[(i)]
\item The point $\gamma(0)$;
\item The order of ramification $d$ of $\gamma$ at $0$;
\item The sign of the determinant $\det(\rho(\sigma))$ of the generalized homological invariant $\rho$ of $\calX$, where $\sigma \in  \pi_1(\Delta^*)$ is a simple closed loop around $\delta = 0$; this may be computed explicitly from $b$ and $d$ by Proposition \ref{prop:generalmonodromy}.
\end{enumerate}

The results of this section are summarized by the following theorem. We prove it via a case-by-case analysis in subsections \ref{subsec:gammageneral}--\ref{subsec:gammainfinity}. This analysis also contains a detailed description of the various fibre types, along with figures illustrating some of them.

\begin{theorem} \label{thm:singularfibres} With notation and assumptions as above, we may perform a birational modification $\calX \dashrightarrow \calX'$ that affects only the fibre over $0 \in \Delta$, so that the singularities of $\calX'$ are at worst terminal and the canonical sheaf $\omega_{\calX'}$ is trivial. The central fibre of $\calX'$ is one of the possibilities listed in Table \ref{tab:fibres}.
\end{theorem}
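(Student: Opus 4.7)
The plan is to prove Theorem \ref{thm:singularfibres} by an explicit case-by-case analysis, taking \eqref{eq:2dimX1} as the local model for $\overline{\pi}\colon \overline{\calX} \to \Delta$ and combining with the generic-fibre resolution from Section \ref{sec:genfibres}. After an analytic reparametrization of $\Delta$, I would normalize so that $\gamma(\delta) - \gamma(0)$ (or $\gamma(\delta)^{-1}$ when $\gamma(0)=\infty$) equals $\delta^d$ times a unit, and $\beta'(\delta)$ equals $\delta^{\pm b}$ times a unit. By Theorem \ref{thm:Lclassification} together with Propositions \ref{prop:plusminus} and \ref{prop:generalmonodromy}, any $\calX'$ satisfying the conclusion is then determined, up to birational modifications of its central fibre, by the triple $\bigl(\gamma(0),\, d,\, (-1)^b\bigr)$; this triple takes only finitely many essentially distinct values, and my task reduces to producing a good $\calX'$ for each.

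The next step is to split into cases according to $\gamma(0)$. When $\gamma(0)\notin\{0,-1,\infty\}$, the singularities of $\overline{\calX}$ are concentrated on the pull-backs of the loci $L_x,L_y,L_z$ and $P_1,\dots,P_4$ identified in Section \ref{sec:genfibres}, together with new singular loci created by the ramification of $\beta'$; the central fibre should be a (possibly multiple) smooth $M_1$-polarized K3 surface. When $\gamma(0)\in\{-1,\infty,0\}$, the relevant monodromy $\Gamma_{\gamma(0)}^d$ from Lemma \ref{lem:X1monodromy} is of order $2$, of order $6$, or unipotent respectively, and the expected degenerations closely parallel the Kodaira classification of elliptic surfaces and the $M_n$-classification of \cite{cytfhrlpk3s}, whose analogous lists of fibres would guide the naming and organization of Table \ref{tab:fibres}.

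For each triple $\bigl(\gamma(0),d,b\bigr)$, the construction of $\calX'$ proceeds by writing \eqref{eq:2dimX1} in local analytic coordinates around each point of the central fibre—at the preimages of $P_i$, at the intersections $\{\delta=0\}\cap L_\bullet$, and at the new loci arising from the vanishing of $\beta'$, of $\gamma-\gamma(0)$, or of $\gamma^{-1}$—and then resolving the resulting singularities by an explicit sequence of weighted blowups along smooth centres and small contractions. Because the Weierstrass form embeds as an anticanonical hypersurface in a toric ambient, $\omega_{\overline{\calX}}$ is trivial, and each blowup along a smooth centre contained in the singular locus of the current model is crepant, preserving triviality of the canonical sheaf throughout. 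One halts the resolution process as soon as all remaining singularities have strictly positive discrepancy, i.e.\ are terminal, yielding the desired $\calX'$; its central fibre is then read off and recorded as an entry of Table \ref{tab:fibres}.

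The main obstacle is the volume and delicacy of the explicit singularity computations: the singular loci of $\overline{\calX}$ along the central fibre are, in many cases, neither isolated nor compound Du Val, so the correct sequence of crepant centres (and the correct moment to stop short of a full resolution) is highly case-dependent and must be determined by hand for each fibre type. A secondary difficulty is verifying simultaneously that the final model has both trivial $\omega_{\calX'}$ \emph{and} only terminal singularities, which forces every step of the resolution to be crepant rather than merely a blowup and requires careful discrepancy bookkeeping. As noted in the acknowledgments, these computations are substantially aided by the computer algebra systems Sagemath and Singular.
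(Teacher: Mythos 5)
Your overall strategy is the paper's: reduce, via Theorem \ref{thm:Lclassification}, Proposition \ref{prop:plusminus} and Proposition \ref{prop:generalmonodromy}, to the data $(\gamma(0),d,\det\rho)$, realize each case by an explicit choice of $\beta'(\delta),\gamma(\delta)$ in the Weierstrass form, and resolve case by case while tracking crepancy, stopping at terminal singularities. However, there are genuine gaps in how you propose to carry this out. First, the claim that the triple ``takes only finitely many essentially distinct values'' is false as stated and hides the two real difficulties: for $\gamma(0)=0$ the fibres $\mathrm{I}_d$, $\mathrm{I}_d^*$ form genuinely infinite families (with $d^2+2$, resp.\ $2d^2+9d+10$, components), handled in the paper by base change, an inductive resolution of the singularity $\{stu^2+stv+v^{d+1}=0\}$ at the point $R$, and the semistable results of \cite{bgd7}; and for $\gamma(0)=\infty$ the fact that only $d \bmod 6$ matters is not automatic --- it is proved by an induction using the substitutions $w\mapsto \delta w$ (and $x\mapsto \delta x$), which take the family \emph{out} of Weierstrass form, forcing the paper to work with the more general families \eqref{eq:infinity1}, \eqref{eq:infinity2} with the auxiliary exponent $k\geq 0$. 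A plan that only ever resolves models of the form \eqref{eq:2dimX1} cannot reach this mod-$6$ periodicity. Second, your expectation that for $\gamma(0)\notin\{0,-1,\infty\}$ the central fibre is ``a (possibly multiple) smooth $M_1$-polarized K3'' is wrong in the $\det\rho=-1$ case: the $\mathrm{I}_0^*$ fibre of Proposition \ref{prop:I0*} has $11$ components, one of them elliptic ruled, and no multiple fibres occur anywhere in the classification.

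Third, the assertion that ``each blowup along a smooth centre contained in the singular locus of the current model is crepant'' is not true and cannot replace discrepancy bookkeeping: blowing up an ordinary threefold node has discrepancy $1$, which is precisely why the paper uses \emph{small} resolutions of the three nodes arising in the $\mathrm{I}_d$ analysis, a toric simultaneous resolution for the transverse intersection of three $A_1$ curves, and the explicit Lemma \ref{lem:singularity} for the type $(m,n)$ points. Relatedly, in the $\gamma(0)=-1$ cases the correct endpoint is \emph{not} a resolution at all: the fibres of types $\mathrm{III}$, $\mathrm{III}^*$ (and their even-$d$ relatives) retain isolated terminal hypersurface singularities such as $\{s^2+t^2+u^2+v^d=0\}$ that admit no crepant divisorial resolution, so ``resolve until everything is terminal'' must in places mean ``stop before blowing up,'' and one must verify terminality of the specific singularities left behind rather than positivity of discrepancies extracted during a resolution. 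These are the concrete points at which your outline, as written, would fail or stall; filling them in essentially reproduces Sections \ref{subsec:gammageneral}--\ref{subsec:gammainfinity} of the paper.
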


Table \ref{tab:fibres} classifies the singular fibres that appear in our models $\pi'\colon \calX' \to \Delta$ and gives some of their basic properties. In this table: ``Type'' gives a name for the singular fibre, in analogy with the singular fibres appearing in elliptic surfaces; ``$\gamma$'' gives the value of $\gamma(0)$; ``$d$'' is the order of ramification of $\gamma$ at $0$; ``Components'' is the number of components of the fibre; ``$\rho$'' is the matrix of the generalized homological invariant $\rho(\sigma)$ (up to conjugation); ``det'' is its determinant; $\omega$ is a primitive sixth root of unity; and fibres marked ``(singular)'' have some additional isolated singularity behaviour, either in the fibre or in the threefold $\calX'$. Finally, $R$ and $S$ are two invariants associated to the singular fibre that will be explained further in Section \ref{sec:invariants}. 

\begin{table}
\begin{center}
\begin{tabular}{|c|c|c|c|c|c|c|c|}
\hline
Type & $\gamma$ & $d$ & Components & $\rho$ & det & $R$ & $S$ \\
\hline
  & $\neq 0,-1,\infty$ &  $d \geq 1$ & $1$ & & & &  \\ 
\cline{2-3}
$\mathrm{I}_0$& $\infty$ & $0 \pmod 3$  &   & $\mathrm{Id}$ & $1$ &$0$ & $0$\\
\cline{2-4}
& $-1$ & $0 \pmod 2$ & $1$ (singular) & & & & \\
\hline
& $\neq 0,-1,\infty$ & $d \geq 1$   & $11$ && & & \\ 
\cline{2-3}
$\mathrm{I}_0^*$ & $\infty$ & $0 \pmod 3$ &   & $-\mathrm{Id}$ & $-1$ & $3$ & $\frac{1}{2}$  \\
\cline{2-4}
& $-1$ & $0 \pmod 2$ & $11$ (singular) & & & & \\
\hline
$\mathrm{I}_d$ & $0$ &  $d \geq 1$ & $d^2 + 2$ & $\begin{psmallmatrix}1 & d & \frac{d(d-1)}{2} \\ 0 & 1 & d \\ 0 & 0 & 1 \end{psmallmatrix}$ & $1$ & $2$ & $0$ \\
\hline
$\mathrm{I}_d^*$ & $0$ & $d \geq 1$ & $2d^2 + 9d + 10$ & $\begin{psmallmatrix}-1 & -d & -\frac{d(d-1)}{2} \\ 0 & -1 & -d \\ 0 & 0 & -1 \end{psmallmatrix}$ & $-1$ & $3$ & $\frac{1}{2}$ \\
\hline
$\mathrm{III}$ & $-1$ & $1 \pmod 2$ & $1$ (singular) & $\begin{psmallmatrix}-1 & 0 & 0 \\ 0 & 1 & 0 \\ 0 & 0 & 1 \end{psmallmatrix}$ & $-1$ & $1$ & $0$ \\
\hline 
$\mathrm{III}^*$ & $-1$ & $1 \pmod 2$ & $11$ (singular) & $\begin{psmallmatrix}1 & 0 & 0 \\ 0 & -1 & 0 \\ 0 & 0 & -1 \end{psmallmatrix}$ & $1$ & $2$ & $\frac{1}{2}$ \\
\hline
$\mathrm{II}^*$ & $\infty$ & $1 \pmod 3$ & $53$ & $\begin{psmallmatrix}-1 & 0 & 0 \\ 0 & \omega & 0 \\ 0 & 0 & \omega^5 \end{psmallmatrix}$ & $-1$ & $3$ & $\frac{5}{6}$ \\
\hline
$\mathrm{IV}^*$ & $\infty$ & $2 \pmod 3$ & $22$ & $\begin{psmallmatrix}1 & 0 & 0 \\ 0 & \omega^2 & 0 \\ 0 & 0 & \omega^4 \end{psmallmatrix}$ & $1$ & $2$ & $\frac{2}{3}$ \\
\hline
$\mathrm{IV}$ & $\infty$ & $1 \pmod 3$ & $6$ & $\begin{psmallmatrix}1 & 0 & 0 \\ 0 & \omega^4 & 0 \\ 0 & 0 & \omega^2 \end{psmallmatrix}$ & $1$ & $2$ &$\frac{1}{3}$ \\
\hline
$\mathrm{II}$ & $\infty$ & $2 \pmod 3$ & $3$ & $\begin{psmallmatrix}-1 & 0 & 0 \\ 0 & \omega^5 & 0 \\ 0 & 0 & \omega \end{psmallmatrix}$ & $-1$ & $3$ & $\frac{1}{6}$ \\
\hline
\end{tabular}
\end{center}
\caption{Summary of singular fibres}
\label{tab:fibres}
\end{table}

\begin{remark} Any two such models $\pi' \colon \calX' \to \Delta$ are related by a birational map affecting only the fibre over $\delta = 0$, which must be an isomorphism in codimension $1$ by \cite[Proposition 3.54]{bgav}. The classification given by Theorem \ref{thm:singularfibres} only holds up to such birational maps. These maps do not affect any of the properties listed in Table \ref{tab:fibres}, but may alter the configurations of components shown in the various illustrative figures in this section.
\end{remark}

To prove Theorem \ref{thm:singularfibres}, we construct the birational models $\pi'\colon \calX' \to \Delta$ explicitly. To do this, we first build singular families $\overline{\pi} \colon \overline{\calX} \to \Delta$ realizing all possibilities for the data (i)-(iii) above by choosing appropriate meromorphic functions $\beta'(\delta)$ and $\gamma(\delta)$ in Equation \eqref{eq:2dimX1}. When we can take $\beta'(0) \notin \{0,\infty\}$, we may simplify matters by using Equation \eqref{eq:X1} instead of Equation \eqref{eq:2dimX1}. 

The cases below come in pairs for each value of $\gamma(0)$, corresponding to the two possible values of $\det(\rho(\sigma))$. In each case we first use Equation \eqref{eq:X1} to treat the case where $\beta'(0) \notin \{0,\infty\}$; this gives us one of the possibilities for $\det(\rho(\sigma))$. We then proceed to Equation \eqref{eq:2dimX1} and allow $\beta'$ to have a zero or pole, which flips the sign of $\det(\rho(\sigma))$ and gives the other possibility. In each case we perform explicit birational modifications to obtain a model $\calX'$ with the required properties. 

By Section \ref{sec:genfibres}, away from $\delta = 0$ the threefolds $\overline{\calX}$ will have precisely four curves of singularities that form sections of the fibration: two curves of $A_7$ singularities, denoted $C_1^{A_7} := \{w= x = y = 0\}$ and $C_2^{A_7} := \{w= x = z = 0\}$; a curve of $A_3$'s,  denoted $C^{A_3} := \{w=y = z = 0\}$; and a curve of $A_1$'s, denoted $C^{A_1} := \{w=x = y+z = 0\}$. Away from $\delta = 0$ these may be crepantly resolved by blowing up in the usual way; their behaviour at $\delta = 0$ will be dealt with case-by-case. 

\subsection{A legend for interpreting figures} \label{sec:legend} 

In the remainder of this section, we will display partial or full resolutions of many of the singular fibre types in Figures \ref{fig:mn}--\ref{fig:II}. Here is a legend for interpreting these figures.
\begin{itemize}
\item Solid lines denote double curves and the boundaries of components; components should be assumed to meet normally along double curves and at triple points unless otherwise stated.
\item In some cases the threefold is singular along a double curve or a component boundary; when this occurs the line is drawn bold and labelled with the corresponding singularity.
\item In some cases the threefold contains a curve of $A_1$ singularities that lies wholly within a component; such curves are drawn dashed.
\item Dotted lines mark the edges of local diagrams.
\item Bold points give the intersection of a fibre with the four curves of singularities that form sections of the fibration: $C_1^{A_7}$ and $C_2^{A_7}$ are always drawn in the bottom corners, $C^{A_3}$ is always at the top, and $C^{A_1}$ is always the bottom middle.
\item Points labelled by a letter correspond to more complicated threefold singularities, which will be described individually as they occur.
\item Faint gray lines radiating from the edges of a diagram denote exceptional divisors arising from the curves  $C_1^{A_7}$, $C_2^{A_7}$, $C^{A_3}$, and $C^{A_1}$; as each of these curves forms a section of the fibration, these exceptional divisors are horizontal and do not form part of the singular fibre under examination.
\item Unshaded components should be visualized as lying flat on the page, with gray shaded components protruding out of the page towards the reader.
\item Integers labelling components denote multiplicities.
\end{itemize}

For example, Figure \ref{fig:I0*unresolved} shows an unresolved singular fibre containing three irreducible components. It contains two curves of $A_1$ singularities: one along the bottom edge and one given by the dashed curve in the component $\{x = 0\}$. The curves $C_1^{A_7}$, $C_2^{A_7}$, $C^{A_3}$, and $C^{A_1}$, meet this fibre in the four bold points, and the points labelled $P$ and $Q$ are more complicated threefold singularities. Figure \ref{fig:I0*} shows a resolution of this same fibre. It contains eleven components, ten of which lie in the plane of the page and one of which protrudes outwards. There are ten components of multiplicity $1$ and one of multiplicity $2$.

\subsection{Singularities of type \texorpdfstring{$(m,n)$}{(m,n)}}

A particular type of singularity will arise repeatedly in our analysis of the singular fibres. It will be very useful to have an understanding of this type of singularity before we embark on this analysis.

\begin{definition} Let $m \geq n \geq 0$ be integers with $m \geq 2$. A point $P$ on a threefold $\calX$ is called a \emph{singularity of type $(m,n)$} if $P \in \calX$ is locally analytically isomorphic to 
\[0 \in \{s^mt^n + stu + uv^2 = 0\} \subset \bA^4[s,t,u,v].\]
\end{definition}

If $n > 0$, a singularity of type $(m,n)$ lies at the confluence of three curves of singularities of types $A_{2m-1}$, $A_{2n-1}$, and $A_1$. If $n = 0$, instead there are two curves of singularities, of types $A_{2m-1}$ and $A_1$.

\begin{lemma} \label{lem:singularity} Consider a singularity of type $(m,n)$, given locally by the point $(0,0,0,0)$ in the hypersurface
\[\{s^mt^n + stu + uv^2 = 0\} \subset \bA^4[s,t,u,v].\]
After crepantly resolving the singularities of this hypersurface, the point $(0,0,0,0)$ gives rise to $(m+n-2)$ exceptional components in addition to the exceptional components arising from the curves of singularities. Moreover, after the resolution,
\begin{itemize}
\item if $n > 0$, all exceptional components and the strict transforms of the four hyperplanes $\{s = u = 0\}$, $\{s = v = 0\}$, $\{t = u = 0\}$, and $\{t = v = 0\}$ meet normally, and
\item if $n = 0$, all exceptional components and the strict transforms of the two hyperplanes $\{s = u = 0\}$ and $\{s = v = 0\}$ meet normally.
\end{itemize}
\end{lemma}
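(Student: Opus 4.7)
The proof uses toric geometry applied to the Newton polytope $\Delta \subset \bR^4$ of $f := s^mt^n + stu + uv^2$, the triangle with vertices $P_1 = (m,n,0,0)$, $P_2 = (1,1,1,0)$, $P_3 = (0,0,1,2)$. First I would verify the claim about the singular locus by applying the Jacobian criterion directly. Splitting into cases according to whether $u$ or $v$ vanishes, one finds that the singular locus of $\{f = 0\}$ consists of the $t$-axis (with transverse $A_{2m-1}$), the $u$-axis (transverse $A_1$), and, when $n \geq 1$, a third curve with transverse $A_{2n-1}$: this third curve is the $s$-axis when $n \geq 2$, and the twisted curve $\{t = v = 0,\ u = -s^{m-1}\}$ when $n = 1$. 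In each case the transverse type is identified by completing the square in $s$ or $t$ and factoring the resulting discriminant.

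Since $f$ is Newton non-degenerate with respect to $\Delta$ (the partial $\partial_v f = 2uv$ already excludes critical points of any face restriction in $(\bC^*)^4$), I would then invoke the standard toric resolution theory. Any smooth simplicial subdivision $\Sigma$ of the positive orthant $\bR^4_{\geq 0}$ whose new primitive rays $w = (a,b,c,d)$ all satisfy the crepancy condition
\[
\min\bigl(am + bn,\ a+b+c,\ c+2d\bigr) = a+b+c+d - 1
\]
induces a toric morphism $\pi\colon Y_\Sigma \to \bA^4$ whose restriction to the strict transform $\tilde X$ is a smooth crepant resolution of $X = \{f = 0\}$. The exceptional divisors of $\tilde X \to X$ are in bijection with the new rays of $\Sigma$, and the image of the divisor attached to a ray $w$ lies in the coordinate stratum corresponding to the smallest face of the orthant containing $w$.

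The central computation is then a classification of primitive rays on the Newton roof. Rewriting the crepancy condition as $d \leq 1$, $d \geq a+b-1$, and $am + bn \geq a+b+c+d-1$, with equality in at least one inequality, a short case analysis shows that the rays with all four coordinates strictly positive must have $d = 1$, $a = b = 1$, and $1 \leq c \leq m+n-2$. These are the $m+n-2$ rays $(1,1,c,1)$, and by the image description above their exceptional divisors lie entirely over the origin. The remaining new rays of $\Sigma$ lie on proper faces of the orthant and are chosen so as to resolve the transverse $A_{2m-1}$, $A_{2n-1}$, and $A_1$ singularities along the singular curves---including the non-toric $A_1$ curve when $n = 1$, which is resolved as a side effect of the subdivision of the 2-face $\{b = d = 0\}$. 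Once such a $\Sigma$ is fixed, smoothness of $\tilde X$ and simple normal crossings of the exceptional divisors together with the strict transforms of the four (resp.\ two) coordinate hyperplanes follow from Newton non-degeneracy, via the standard transversality of $\tilde X$ with the toric strata of $Y_\Sigma$.

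I expect the principal obstacle to be the combinatorial bookkeeping required to construct $\Sigma$ explicitly: the simplicial triangulation of each 3-face of the orthant that realizes the transverse $A_k$-resolution must be compatible with the triangulations of the adjacent 2-faces and with the $m+n-2$ interior ``point rays'' $(1,1,c,1)$, and one must verify smoothness of every maximal cone by checking that its generators form a $\bZ$-basis of $\bZ^4$. This is a routine but lengthy exercise in toric combinatorics, which can be organized cleanly by induction on $m+n$.
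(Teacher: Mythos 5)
Your route is genuinely different from the paper's: the paper proves this lemma by an explicit sequence of blow-ups (first the curve of $A_{2m-1}$ singularities, then the $A_1$ curve -- in the opposite order when $(m,n)=(2,0)$ -- and then the remaining curves one at a time), reading the count $m+n-2$ and the normal-crossings statement off the resulting configurations, whereas you work with a toric modification of the ambient $\bA^4$ adapted to the Newton polyhedron. Your discrepancy computation is correct: the crepancy condition $\min(am+bn,\,a+b+c,\,c+2d)=a+b+c+d-1$ does force interior rays to be $(1,1,c,1)$ with $1\le c\le m+n-2$, which is an attractive explanation of where the number $m+n-2$ comes from.

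However, as written the argument has genuine gaps. First, ``the exceptional divisors of $\tilde X\to X$ are in bijection with the new rays of $\Sigma$'' is not a theorem: for each interior crepant ray $w$ you must check that $D_w\cap\tilde X$ is nonempty and irreducible (here this needs the observation that the face selected by $(1,1,c,1)$ is the edge with face polynomial $u(st+v^2)$, resp.\ the whole triangle when $c=m+n-2$), and, since the lemma counts components over the origin \emph{in addition to} those coming from the singular curves, you must also rule out that rays lying on proper faces of the orthant contribute components of $D_w\cap\tilde X$ mapping to the origin rather than dominating a singular curve; neither point is addressed. Second, the existence of a smooth simplicial subdivision all of whose new rays satisfy the crepancy condition, compatible with resolving the transverse $A_k$ loci along the non-compact directions, is precisely the content of the lemma rather than ``routine bookkeeping'': the singularities here are non-isolated, so Newton non-degeneracy of the compact faces alone does not give you smoothness of the strict transform or the SNC statement along the whole exceptional fibre -- you need control of the non-compact faces, which is exactly what the paper's explicit blow-ups verify. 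Third, a concrete error: when $n=1$ the transverse $A_1$ curve is $\{t=v=0,\ u=-s^{m-1}\}$, which is not torus-invariant and lies over the stratum $\{t=v=0\}$, i.e.\ over the face $\{a=c=0\}$ of the orthant, not over $\{b=d=0\}$ as you assert; that a toric modification (e.g.\ the ray $(0,1,0,1)$, the blow-up of the plane $\{t=v=0\}$) does resolve along this curve is true but must be checked by hand, since it does not follow from non-degeneracy at the origin. Until these points are supplied, the proposal is a plausible programme rather than a proof.
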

\begin{proof} This is a lengthy explicit computation. If $(m,n) \neq (2,0)$, we begin by blowing-up the curve of $A_{2m-1}$ singularities, followed by the curve of $A_1$ singularities. This gives rise to four exceptional components: two from the curve of $A_{2m-1}$'s, one from the curve of $A_1$'s, and one from the point $(0,0,0,0)$. If $(m,n) = (2,0)$ we perform the same blow-ups in the opposite order, obtaining three exceptional components. The resulting threefolds are depicted in Figure \ref{fig:mn}. In this figure, the strict transforms of hyperplanes in the original threefold are labelled by the pair of variables that vanish on that component (so the strict transform of the component ${s=v = 0}$ is labelled $(s,v)$, etc.), and the other components are exceptional. 

\begin{figure}
  \begin{center}
  \begin{subfigure}{0.45\textwidth}
  \begin{center}
\begin{tikzpicture}[scale=0.7]
\filldraw[lightgray](5,2)--(3.5,4.6)--(3.5,2.6)--(5,0)--(5,2);
\draw (0,1.5)--(2,1.5)--(2,0);
\draw (2,1.5)--(2.5,2)--(2.25,2.43)--(0,2.43);
\draw(2.5,2)--(3.85,2);
\draw[dotted](3.5,2)--(5,2);
\draw(5,2)--(7,2);
\draw(2.25,2.43)--(3.5,4.6)--(3.5,2.6);
\draw[dotted] (3.5,2.6)--(5,0);
\draw[very thick] (3.5,6)--(3.5,4.6)--(5,2)--(5,0);
\draw [dotted] (0,0)--(0,6)--(7,6)--(7,0)--(0,0);

\node[above right] at (4.95,-0.1) {$A_{2m-3}$};
\node[below right] at (3.45,6) {$A_{2n-1}$};
\node[above right] at (4.25,3) {$A_{m+n-3}$};
\node at (1,4.7) {$(t,v)$};
\node at (6,4.7) {$(t,u)$};
\node at (1,0.7) {$(s,v)$};
\node at (6,1) {$(s,u)$};
\end{tikzpicture}
\caption{$n \geq 2$.}
\label{fig:n>1}
\end{center}
\end{subfigure}
\begin{subfigure}{0.45\textwidth}
  \begin{center}
\begin{tikzpicture}[scale=0.7]
\filldraw[lightgray](5,2)--(3.5,4.6)--(3.5,2.6)--(5,0)--(5,2);
\draw (0,1.5)--(2,1.5)--(2,0);
\draw (2,1.5)--(2.5,2)--(2.25,2.43)--(0,2.43);
\draw(2.5,2)--(3.85,2);
\draw(2.5,2)--(3.85,2);
\draw[dotted](3.5,2)--(5,2);
\draw(5,2)--(7,2);
\draw(2.25,2.43)--(3.5,4.6)--(3.5,2.6);
\draw[dotted] (3.5,2.6)--(5,0);
\draw(3.5,6)--(3.5,4.6);
\draw[very thick] (3.5,4.6)--(5,2)--(5,0);

\draw[dashed,domain=0:1.4] plot ({3.5-\x*\x*\x},{\x+4.6});

\draw [dotted] (0,0)--(0,6)--(7,6)--(7,0)--(0,0);

\node[above right] at (4.95,-0.1) {$A_{2m-3}$};
\node[above right] at (4.25,3) {$A_{m-2}$};
\node at (1,4.7) {$(t,v)$};
\node at (6,4.7) {$(t,u)$};
\node at (1,0.7) {$(s,v)$};
\node at (6,1) {$(s,u)$};
\end{tikzpicture}
\caption{$m \geq 3$, $n = 1$.}
\label{fig:n=1m>2}
\end{center}
\end{subfigure}
\bigskip 

\begin{subfigure}{0.45\textwidth}
  \begin{center}
\begin{tikzpicture}[scale=0.7]
\filldraw[lightgray](5,2)--(2.75,3.3)--(2.75,1.3)--(5,0)--(5,2);
\draw (0,1.5)--(2,1.5)--(2,0);
\draw (2,1.5)--(2.5,2)--(2.25,2.43)--(0,2.43);
\draw(2.5,2)--(2.75,2);
\draw[dotted](2.75,2)--(5,2);
\draw(5,2)--(7,2);
\draw(2.25,2.43)--(3.5,4.6);
\draw(3.5,6)--(3.5,4.6)--(5,2);
\draw [very thick] (5,2)--(5,0);
\draw (5,2)--(2.75,3.3)--(2.75,1.3);
\draw [dotted] (2.75,1.3)--(5,0);
\draw [dashed] (2.75,3.3)--(0.05,6);

\draw [dotted] (0,0)--(0,6)--(7,6)--(7,0)--(0,0);

\node[above right] at (4.95,-0.1) {$A_1$};
\node at (1,4) {$(t,v)$};
\node at (6,4.7) {$(t,u)$};
\node at (1,0.7) {$(s,v)$};
\node at (6,1) {$(s,u)$};
\end{tikzpicture}
\caption{$m =2$, $n = 1$.}
\label{fig:n=1m=2}
\end{center}
\end{subfigure}
\begin{subfigure}{0.45\textwidth}
  \begin{center}
\begin{tikzpicture}[scale=0.7]
\filldraw[lightgray](5,2)--(3.5,4.6)--(3.5,2.6)--(5,0)--(5,2);
\draw (0,1.5)--(2,1.5)--(2.5,2)--(2.5,4.6);
\draw (0,4.6)--(3.5,4.6);
\draw (2,1.5)--(2,0);
\draw(2.5,2)--(3.85,2);
\draw[dotted](3.5,2)--(5,2);
\draw (3.5,4.6)--(3.5,2.6);
\draw[dotted] (3.5,2.6)--(5,0);
\draw[very thick] (5,2)--(5,0);
\draw [very thick] (3.5,4.6)--(5,2);
\draw (5,2)--(7,2);
\draw[dotted] (0,4.6)--(0,0)--(7,0)--(7,2);

\node[above right] at (4.95,-0.1) {$A_{2m-3}$};
\node[above right] at (4.25,3) {$A_{m-3}$};
\node at (1,0.7) {$(s,v)$};
\node at (6,1) {$(s,u)$};
\end{tikzpicture}
\caption{$m \geq 3$, $n =0$.}
\label{fig:n=0m>2}
\end{center}
\end{subfigure}
\bigskip

\begin{subfigure}{0.45\textwidth}
 \begin{center}
\begin{tikzpicture}[scale=0.7]
\filldraw[lightgray](5,2)--(3.5,4.6)--(3.5,2.6)--(5,0)--(5,2);
\draw (0,4.6)--(7,4.6);
\draw (2,2)--(2,0);
\draw(0,2)--(3.85,2);
\draw[dotted](3.5,2)--(5,2);
\draw (3.5,4.6)--(3.5,2.6);
\draw[dotted] (3.5,2.6)--(5,0);
\draw[very thick] (5,2)--(5,0);
\draw (3.5,4.6)--(5,2);
\draw (5,2)--(7,4.6);
\draw[dotted] (0,4.6)--(0,0)--(7,0)--(7,4.6);

\node[above right] at (4.95,-0.1) {$A_{1}$};
\node at (1,1) {$(s,v)$};
\node at (6,1) {$(s,u)$};
\end{tikzpicture}
\caption{$m =2$, $n =0$.}
\label{fig:n=0m=2}
\end{center}
\end{subfigure}
\caption{Partial resolution of a singularity of type $(m,n)$.}
\label{fig:mn}
\end{center}
\end{figure}

From this point, each remaining curve of singularities may be blown up individually to obtain the resolution; there is no unusual behaviour at the points where the curves of singularities meet.
\end{proof}

\subsection{Fibres with \texorpdfstring{$\gamma \notin \{0, -1, \infty\}$}{gamma not equal to 0, -1, infinity}}\label{subsec:gammageneral} We now begin our case-by-case analysis of the singular fibres, beginning with those fibres with $\gamma(0) \notin \{0, -1, \infty\}$.

 \subsubsection{Type \texorpdfstring{$\mathrm{I}_0$}{I0}} Data: $\gamma \notin \{0,-1,\infty\}$, $d \geq 1$, $\det(\rho) = 1$.  
 
These data are realized by the family $\overline{\calX} \to \Delta$ given by restricting Equation \eqref{eq:X1} to an appropriate disc $\Delta$ that does not contain $\{0,-1,\infty\}$. Section \ref{sec:genfibres} shows that the threefold $\overline{\calX}$ will be singular only along the four non-intersecting curves $C_1^{A_7}$, $C_2^{A_7}$, $C^{A_3}$, and $C^{A_1}$. They may be resolved in the usual way to obtain the smooth model $\calX'$. The generalized homological invariant evaluated on any loop is the identity. A cover totally ramified over the central fibre retains the same behaviour and the same type.

\subsubsection{Type \texorpdfstring{$\mathrm{I}_0^*$}{I0*}}\label{sec:I0*} Data: $\gamma \notin \{0,-1,\infty\}$, $d \geq 1$, $\det(\rho) = -1$. 

Let $c \notin \{0,-1,\infty\}$ be an arbitrary constant and $\delta \in \Delta$. These data are realized by Equation \eqref{eq:2dimX1} when $\beta'(\delta) = \frac{1}{\delta}$ and $\gamma =  (\delta^d + c)$. Clearing denominators gives
\begin{equation} \label{eq:I0*}\frac{\delta(\delta^d + c)}{27} w^4+ \delta w^2yz  + \delta wy^2z + \delta wyz^2  + x^2yz = 0,\end{equation}
which produces a family $\overline{\calX} \to \Delta$ with fibre over $\delta = 0$ as displayed in Figure \ref{fig:I0*unresolved}.  Note that the dotted line of $A_1$ singularities in this figure is a smooth elliptic curve in the component $\{x = 0\}$, defined by the equation $\frac{c}{27}w^3 + wyz + y^2z + yz^2 = 0$.

\begin{figure}
\begin{center}
\begin{tikzpicture}[scale=0.8]
\draw[very thick] (0,0)--(8,0);
\draw (0,0)--(4,6.95)--(8,0);
\draw (0,0)--(4,2.32)--(8,0);
\draw (4,2.32)--(4,6.95);
\draw [dashed] (4,0) .. controls (5,1.1) .. (8,0);
\draw [dashed] (0,0) .. controls (3,1.1) .. (4,0);

\filldraw[black]
(0,0) circle (3pt)
(4,0) circle (3pt)
(8,0) circle (3pt)
(4,6.95) circle (3pt)
;

\node[above] at (5.2,-0.1) {$A_1$};
\node[right] at (7.9,0.3) {$P$};
\node[right] at (-0.5,0.3) {$P$};
\node[below] at (3,4) {$y=0$};
\node[below] at (5,4) {$z=0$};
\node at (4,1.1) {$x = 0$};
\node[below] at (4,-0.1) {$Q$};
\end{tikzpicture}
\caption{Fibre over $\delta = 0$ in Equation \eqref{eq:I0*}.}
\label{fig:I0*unresolved}
\end{center}
\end{figure}

\begin{proposition}\label{prop:I0*} The threefold defined by Equation \eqref{eq:I0*} admits a crepant resolution to a smooth threefold. The central fibre of this threefold contains $11$ components which meet normally. The generalized homological invariant evaluated on a simple closed loop around $\delta = 0$ is conjugate to $-\mathrm{Id}$. This fibre type is called $\mathrm{I}_0^*$.
\end{proposition}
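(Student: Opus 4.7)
The plan is to identify the singular locus of $\overline{\calX}$, classify each isolated singularity in the fibre over $\delta = 0$, construct an explicit crepant resolution, and finally read off the monodromy from Proposition \ref{prop:generalmonodromy}. First I would compute the singular locus of $\overline{\calX}$ by differentiating the defining equation. Away from $\delta = 0$ this recovers the four horizontal section curves $C_1^{A_7}, C_2^{A_7}, C^{A_3}, C^{A_1}$ from Section \ref{sec:genfibres}. At $\delta = 0$ there are two additional vertical curves of $A_1$ singularities: the line $\{w = x = 0, \delta = 0\}$ (the bottom edge in Figure \ref{fig:I0*unresolved}) and the smooth elliptic curve $E = \{x = 0, \delta = 0, \tfrac{c}{27}w^3 + wyz + y^2z + yz^2 = 0\}$ sitting inside the component $\{x = 0\}$. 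The three bold points $P_1, P_2, Q$ in Figure \ref{fig:I0*unresolved} are isolated threefold singularities at which the horizontal curves meet the vertical ones, and will require individual treatment.

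Next I would classify these isolated singularities by local coordinate calculations. In the chart $z = 1$ near $P_1$, the three curves meeting there are the horizontal $C_1^{A_7}$ (of type $A_7$) together with the edge curve and $E$ (each of type $A_1$), and I expect a Weierstrass-preparation change of coordinates straightening $E$ to yield the normal form $s^4 t + stu + uv^2 = 0$ of a $(4,1)$-singularity in the sense of the definition preceding Lemma \ref{lem:singularity}. The analogous statement at $P_2$ follows by the $y \leftrightarrow z$ symmetry. For $Q$, working in the chart $y = 1$ with $\tilde z := z + 1$, one factors out the unit $(1 - \tilde z)$ and absorbs the $\delta w^4$ correction into a new coordinate $p'$ replacing $w + \tilde z$; this reduces the local equation to $x^2 + \delta w p' = 0$, which is the ordinary triple-point singularity $\bA^3/(\bZ_2)^2$ sitting at the transverse meeting of three $A_1$-curves (the edge, $E$, and $C^{A_1}$).

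With these local models in hand I construct the crepant resolution. Lemma \ref{lem:singularity} gives a normal-crossings crepant resolution at each of $P_1, P_2$, contributing $m+n-2 = 3$ new vertical exceptional components per point. The triple point at $Q$ is resolved crepantly by successively blowing up the three $A_1$-curves through it; the resulting three exceptional divisors coincide with the divisors obtained from the global resolutions of those three curves, so no additional divisor arises from the point itself (this is consistent with the McKay count of $3$ junior classes for $\bA^3/(\bZ_2)^2$). Exceptional divisors lying over the horizontal section curves are horizontal and do not enter the central fibre, while the vertical contributions are: the $3$ strict transforms of $\{x=0\}, \{y=0\}, \{z=0\}$; $3$ each from $P_1$ and $P_2$; $1$ from the edge curve; and $1$ from $E$. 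This totals $3 + 6 + 1 + 1 = 11$ components, meeting normally by Lemma \ref{lem:singularity} together with the explicit analysis of the blow-up at $Q$.

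The claim on the generalized homological invariant then follows immediately from Proposition \ref{prop:generalmonodromy}: since $\beta'(\delta) = 1/\delta$ has a simple pole we have $b = 1$, and since $\gamma(0) = c \notin \{0, -1, \infty\}$ we have $d = 0$, so the monodromy is conjugate to $(-\mathrm{Id})^1 \cdot \mathrm{Id} = -\mathrm{Id}$. I expect the main obstacle to be the local analysis at $Q$: since the triple-point singularity is not of $(m,n)$-type, one must verify independently that its crepant resolution contributes exactly the two vertical exceptional divisors coming from the edge curve and $E$ (and no more), and the Weierstrass preparation needed to absorb the elliptic correction term requires some care.
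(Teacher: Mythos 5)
Your argument is correct and follows essentially the same route as the paper's proof: the same identification of the singular locus (the vertical edge and elliptic $A_1$-curves, the points $P$ being type $(4,1)$ singularities handled by Lemma \ref{lem:singularity}, and the local model $\{x^2 + \delta w p' = 0\}$, i.e.\ $\{stu = v^2\}$, at $Q$), the same count $3+6+1+1=11$ of vertical components, and the same appeal to Proposition \ref{prop:generalmonodromy} to get $-\mathrm{Id}$. The only divergence is at $Q$, where the paper resolves $\{stu=v^2\}$ in one step by the toric method of \cite{bgd7} (from which crepancy and normal crossings are read off the fan), whereas you blow up the three $A_1$-curves successively and invoke the McKay count; this is an equally valid crepant resolution, producing the same three curve-exceptional divisors and no extra divisor over the point, so nothing in the statement is affected.
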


\begin{proof}Away from the points marked $P$ and $Q$ in Figure \ref{fig:I0*unresolved}, we have smooth curves of $A_i$ singularities which may be crepantly resolved by blowups in the usual way. 

The point marked $Q$ is the transverse intersection of three curves of $A_1$ singularities (two in the fibre and the curve $C^{A_1}$). This configuration may be simultaneously resolved by the toric method of \cite[Section 1]{bgd7} as follows. Local analytically, $Q \in \overline{\calX}$ has the form $0 \in \{stu = v^2\} \subset \bA^4[s,t,u,v]$. This corresponds to an affine toric variety with cone generated by the rays $(1, 0, 0)$, $(0, 1, 0)$, $(-1, -1, 2)$ and dual cone generated by the rays $(0, 0, 1)$, $(2, 0, 1)$, $(0, 2, 1)$. The singularity can be resolved by refining this dual cone until the minimal generators of each subcone form a $\bZ$-basis for $\bZ^3$, then taking the associated toric variety. Such a refinement can be achieved by adding three additional rays generated by $(1,0,1)$, $(0,1,1)$, and $(1,1,1)$, along with three faces spanned by pairs of these. As all rays added are generated by interior lattice points on a height $1$ slice, the resulting resolution is crepant.

The points marked $P$ are singularities of type $(4,1)$ which we may resolve by Lemma \ref{lem:singularity}. The components that pass through them are the hyperplanes from this lemma, so the resulting configuration of divisors meets normally.

The resulting threefold is smooth with trivial canonical sheaf and its central fibre contains $11$ components which meet normally; this fibre is shown in Figure \ref{fig:I0*}. We note for future reference that the gray shaded component is elliptic ruled, but the other components are rational. By Proposition \ref{prop:generalmonodromy}, the generalized homological invariant evaluated on a simple closed loop around $\delta = 0$ must be $-\mathrm{Id}$. \end{proof}

\begin{figure}
  \begin{center}
\begin{tikzpicture}[scale=0.33]
\draw (27.5,3.15)--(18.6,18.5)--(17.3,19.25)--(16,18.5)--(14.7,19.25)--(13.4,18.5)--(4.5,3.15);

\draw (16,9.2)--(16,18.5);
\draw [lightgray] (18.6,18.5)--(18.6,20.5);
\draw [lightgray] (17.3,19.25)--(17.3,20.5);
\draw [lightgray] (14.7,19.25)--(14.7,20.5);
\draw [lightgray] (13.4,18.5)--(13.4,20.5);

\draw (3.5,2)--(4.5,3.15)--(5.5,3.15)--(16,9.2);
\draw (5.5,3.15)--(6.5,1.4);
\draw (3.5,2)--(4.5,0.25);

\draw (4.5,0.25)--(5.85,0.25)--(6.5,1.4)--(8.5,0.25);
\draw (5.85,0.25)--(6.5,-0.9);

\draw (6.5,-0.9)--(7.85,-0.9)--(8.5,0.25)--(10.5,-0.9);
\draw (7.85,-0.9)--(8.5,-2.05);

\draw (8.5,-2.05)--(9.85,-2.05)--(10.5,-0.9);
\draw (9.85,-2.05)--(10.5,-3.2);

\draw (10.5,-3.2)--(14.5,-5.5);

\draw [lightgray] (3.5,2)--(1.5,0.85);
\draw [lightgray] (4.5,0.25)--(2.5,-0.9);
\draw [lightgray] (6.5,-0.9)--(4.5,-2.05);
\draw [lightgray] (8.5,-2.05)--(6.5,-3.2);
\draw [lightgray] (10.5,-3.2)--(8.5,-4.35);
\draw [lightgray] (12.5,-4.35)--(10.5,-5.5);
\draw [lightgray] (14.5,-5.5)--(12.5,-6.65);

\draw (28.5,2)--(27.5,3.15)--(26.5,3.15)--(16,9.2);
\draw (26.5,3.15)--(25.5,1.4);
\draw (28.5,2)--(27.5,0.25);

\draw (27.5,0.25)--(26.15,0.25)--(25.5,1.4)--(23.5,0.25);
\draw (26.15,0.25)--(25.5,-0.9);

\draw (25.5,-0.9)--(24.15,-0.9)--(23.5,0.25)--(21.5,-0.9);
\draw (24.15,-0.9)--(23.5,-2.05);

\draw (23.5,-2.05)--(22.15,-2.05)--(21.5,-0.9);
\draw (22.15,-2.05)--(21.5,-3.2);

\draw (21.5,-3.2)--(17.5,-5.5);

\draw [lightgray] (28.5,2)--(30.5,0.85);
\draw [lightgray] (27.5,0.25)--(29.5,-0.9);
\draw [lightgray] (25.5,-0.9)--(27.5,-2.05);
\draw [lightgray] (23.5,-2.05)--(25.5,-3.2);
\draw [lightgray] (21.5,-3.2)--(23.5,-4.35);
\draw [lightgray] (19.5,-4.35)--(21.5,-5.5);
\draw [lightgray] (17.5,-5.5)--(19.5,-6.65);

\draw (10.5,-0.9)--(21.5,-0.9);
\draw (14.5,-5.5)--(17.5,-5.5);

\filldraw[white] (16.5,-7)--(16.5,-2.7)--(16,-3.2)--(15.5,-5.5)--(15.5,-7)--(16.5,-7);
\filldraw[lightgray] (16,-3.2)--(15.5,-0.9)--(16.5,0.1)--(16.5,-2.7)--(16,-3.2);
\draw [lightgray] (15.5,-7)--(15.5,-5.5);
\draw [lightgray] (16.5,-7)--(16.5,-2.7);
\draw (15.5,-5.5)--(16,-3.2)--(15.5,-0.9);
\draw (16.5,-2.7)--(16.5,0.1);
\draw (16,-3.2)--(16.5,-2.7);

\filldraw[white] (5.25,2.4)--(1.75,0.4)--(2.25,-0.45)--(4.25,0.7)--(5.25,2.4);
\filldraw[lightgray] (5.25,1.85)--(6.25,1.85)--(6.25,2.975)--(5.25,2.4)--(5.25,1.85);
\draw [lightgray] (5.25,2.4)--(1.75,0.4);
\draw [lightgray] (4.25,0.7)--(2.25,-0.45);
\draw (4.25,0.7)--(5.25,1.85)--(6.25,1.85);
\draw (5.25,2.4)--(6.25,2.975);
\draw (5.25,2.4)--(5.25,1.85);

\filldraw[white] (26.75,2.4)--(30.25,0.4)--(29.75,-0.45)--(27.75,0.7)--(26.75,2.4);
\filldraw[lightgray] (26.75,1.85)--(25.75,1.85)--(25.75,2.975)--(26.75,2.4)--(26.75,1.85);
\draw [lightgray] (26.75,2.4)--(30.25,0.4);
\draw [lightgray] (27.75,0.7)--(29.75,-0.45);
\draw (27.75,0.7)--(26.75,1.85)--(25.75,1.85);
\draw (26.75,2.4)--(25.75,2.975);
\draw (26.75,2.4)--(26.75,1.85);

\filldraw[lightgray] (6.25,1.85) .. controls (15,5.6) .. (15.5,-0.9)--(16.5,0.1) .. controls (17,5.6) .. (25.75,1.85)--(25.75,2.975) .. controls (16,7) .. (6.25,2.975)--(6.25,1.85);
\draw (6.25,1.85) .. controls (15,5.6) .. (15.5,-0.9);
\draw (6.25,2.975) .. controls (16,7) .. (25.75,2.975);
\draw (25.75,1.85) .. controls (17,5.6) .. (16.5,0.1);

\node at (19.5,11) {$1$};
\node at (12.5,11) {$1$};

\node at (16,7) {$2$};
\node at (14.5,-3.2) {$1$};
\node at (16,4.5) {$1$};

\node at (9.175,-1.2) {$1$};
\node at (7.175,-0.05) {$1$};
\node at (5.5,1) {$1$};

\node at (22.825,-1.2) {$1$};
\node at (24.825,-0.05) {$1$};
\node at (26.5,1) {$1$};
\end{tikzpicture}
\caption{Fibre of type $\mathrm{I}_0^*$.}
\label{fig:I0*}
  \end{center}
\end{figure}

Finally, one may check that if one pulls back $\overline{\calX}$ by a double cover $\delta \mapsto \delta^2$, the resulting threefold is non-normal along the locus $\{x = \delta= 0\}$. After normalizing, the two components $\{y = \delta = 0\}$ and $\{z = \delta = 0\}$ become exceptional and may be contracted. Resolving singularities of the resulting threefold gives a family with central fibre a smooth K3, as expected. 

\subsection{Fibres with \texorpdfstring{$\gamma = -1$}{gamma = -1}} These fibres turn out to be mild degenerations of fibres of types $\mathrm{I}_0$ and $\mathrm{I}_0^*$, and so are quite simple to describe. The reader may wish to compare the results in this section with \cite[Proposition 3.5]{cytfhrlpk3s}, which gives an analogous description of singular fibres for threefolds fibred by $M_n$-polarized K3 surfaces with $n>1$.

\subsubsection{Types \texorpdfstring{$\mathrm{III}$ and $\mathrm{I}_0$}{III and I0}} Data: $\gamma = -1$, $d \geq 1$, $\det(\rho) = (-1)^d$.

These data are realized by the family $\overline{\calX} \to \Delta$ given by restricting Equation \eqref{eq:X1} to a small disc $\Delta$ containing $\gamma = -1$ and its pull-back under $d$-fold covers. 

\begin{proposition} \label{prop:III} In a neighbourhood of $\gamma = -1$, after crepantly resolving the curves $C_1^{A_7}$, $C_2^{A_7}$, $C^{A_3}$, and $C^{A_1}$, the threefold $\overline{\calX}$ becomes smooth and its singular fibre over $\gamma = -1$ contains an isolated $A_1$ singularity.  

If we pull-back $\overline{\calX}$ by a $d$-fold cover totally ramified over $\gamma = -1$ and crepantly resolve the curves $C_1^{A_7}$, $C_2^{A_7}$, $C^{A_3}$, and $C^{A_1}$, then the singular fibre will still contain an isolated $A_1$ singularity and the resulting threefold will contain an isolated terminal singularity that is locally analytically isomorphic to 
\[0 \in \{s^2 + t^2 + u^2 + v^{d} = 0\} \subset \bA^4[s,t,u,v].\] 
The generalized homological invariant evaluated on a simple closed loop around the preimage of $\gamma = -1$ is conjugate to
\[ \begin{pmatrix} (-1)^d & 0 & 0 \\ 0 & 1 & 0 \\ 0 & 0 & 1 \end{pmatrix}.\]
This fibre type is called $\mathrm{III}$ if $d$ is odd and $\mathrm{I}_0$ if $d$ is even.
\end{proposition}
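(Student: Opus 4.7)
The plan is to locate the extra singularity that appears on the central fibre when $\gamma = -1$, check that it lies in the smooth part of $\overline{\calX}$ in the unramified case, and then pull back by a $d$-fold cover to obtain the claimed terminal singularity.

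First I would substitute $\gamma = -1$ into Equation \eqref{eq:X1} and compute the Jacobian ideal to identify the singular locus of the resulting surface, disregarding the four section curves $C_1^{A_7}$, $C_2^{A_7}$, $C^{A_3}$, $C^{A_1}$, which are singular on every fibre. I expect the Jacobian to cut out a single additional isolated point, lying away from the four section curves, at which the local structure on the surface is that of an ordinary double point. This is consistent with the fact that $\gamma = -1$ corresponds to an orbifold point of order $2$ on $\overline{\calM}_{M_1}$ (Proposition \ref{prop:X1}), where the general $M_1$-polarized K3 surface acquires an order-$2$ automorphism.

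Next, working analytically in a neighbourhood of this new point, I expect $\overline{\calX}$ to admit a local model of the form $\{s^2+t^2+u^2+h(\gamma)=0\}$ with $h$ holomorphic and $h(-1) = 0$. If $h$ vanishes simply at $-1$ (which should follow because the orbifold degeneration is transverse to the base), then a holomorphic change of the base coordinate puts the model in the form $\{s^2+t^2+u^2+(\gamma+1)=0\}$, which is a smooth threefold. Hence, when $\gamma$ itself is a local parameter on $\Delta$, crepantly resolving the four section curves in the usual way yields a smooth threefold whose central fibre is an $M_1$-polarized K3 with a single extra isolated $A_1$ singularity. For the $d$-fold cover, substituting $\gamma + 1 = \delta^d$ into the local model gives $\{s^2+t^2+u^2+\delta^d = 0\} \subset \bA^4[s,t,u,\delta]$, which is the required terminal singularity; the central fibre still carries the isolated $A_1$ on setting $\delta = 0$. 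Finally, the generalized homological invariant follows directly from Proposition \ref{prop:generalmonodromy}: since we work with Equation \eqref{eq:X1} we have $\beta'$ constant and $b = 0$, while $\gamma$ ramifies to order $d$ at $-1$, so the monodromy is conjugate to $\Gamma_{-1}^d$, which is exactly the matrix displayed in the statement. The naming convention $\mathrm{III}$ (for $d$ odd, nontrivial monodromy) versus $\mathrm{I}_0$ (for $d$ even, trivial monodromy) then follows the elliptic-surface convention.

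The main obstacle should be the first step: the explicit computation of the Jacobian ideal of Equation \eqref{eq:X1} at $\gamma = -1$, modulo the defining ideals of the four known section curves, and the verification that this yields a single reduced isolated point with the local analytic form of an $A_1$ singularity on the surface, together with the transversality statement needed for the local model on the threefold. This is conceptually straightforward but requires careful chart-switching in $\bP^3[w,x,y,z]$, and is the sort of computation the authors mention verifying with Singular elsewhere in the paper. Once this local model is in hand, the remaining assertions are formal consequences of it together with Proposition \ref{prop:generalmonodromy}.
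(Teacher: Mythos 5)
Your proposal is correct and follows essentially the same route as the paper, whose proof is simply the explicit local computation you outline (the extra fibre singularity is the single ordinary double point at $(w,x,y,z)=(-3,0,1,1)$, away from the four section curves, with smooth total space and local model $s^2+t^2+u^2=\gamma+1$), combined with the monodromy computation. Your use of Proposition \ref{prop:generalmonodromy} with $b=0$ is equivalent to the paper's citation of Lemma \ref{lem:X1monodromy} and raising $\Gamma_{-1}$ to the $d$th power.
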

\begin{proof} This is an explicit computation. The statement about the generalized homological invariant of the resolution of $\overline{\calX}$ follows from Lemma \ref{lem:X1monodromy}, and its $d$-fold cover is obtained by raising this matrix to the $d$th power. \end{proof}

\begin{remark} \label{rem:isolatedsingularity} When $d$ is even, one may perform a small \emph{analytic} resolution of the isolated threefold singularity to obtain a smooth K3 fibre sitting inside a smooth threefold. This is reflected in our naming convention, which combines this case with the smooth $\mathrm{I}_0$ case. Whether or not such a resolution exists \emph{algebraically} depends upon the global geometry of the threefold in which the fibre appears; this is rather a subtle question which we will not address here. 
\end{remark}

\subsubsection{Types \texorpdfstring{$\mathrm{III}^*$ and $\mathrm{I}_0^*$}{III* and I0*}} Data: $\gamma = -1$, $d \geq 1$, $\det(\rho) = (-1)^{d+1}$.

These data are realized by Equation \eqref{eq:2dimX1} when $\beta'$ has a simple pole and $\gamma$ takes the value $-1$ with order $d$. To construct this, for $\delta \in \Delta$, we set $\beta'(\delta) = \frac{1}{\delta}$ and $\gamma =  (\delta^d -1)$ and clear denominators to give
\begin{equation} \label{eq:III*}\frac{\delta(\delta^d -1)}{27} w^4 +\delta w^2yz + \delta wy^2z + \delta wyz^2 + x^2yz = 0,\end{equation}
which defines a family $\overline{\calX} \to \Delta$. The fibre of this family over $\delta = 0$ is almost identical to the central fibre in the $\mathrm{I}_0^*$ family \eqref{eq:I0*}: the only difference is that the dotted line of $A_1$ singularities in Figure \ref{fig:I0*unresolved}  is now a nodal cubic in the component $\{x = 0\}$, defined by the equation $- \frac{1}{27}w^3 + wyz + y^2z + yz^2 = 0$. The node in this curve appears at the point $(w,y,z) = (-3,1,1)$, which is away from its intersections with the other special curves $\{w = 0\}$, $\{y = 0\}$, and $\{z = 0\}$.

\begin{proposition} \label{prop:III*} The threefold defined by Equation \eqref{eq:III*} admits a partial crepant resolution to a threefold with an isolated terminal singularity. This singularity appears above the node in the cubic curve of $A_1$ singularities in the fibre over $\delta = 0$ and is locally analytically isomorphic to
\[0 \in \{s^2 + t^2 + v(u^2 + v^{d-1}) = 0\} \subset \bA^4[s,t,u,v].\] 
Note that if $d = 1$, this singularity splits into a pair of isolated threefold nodes. The fibre of this threefold over $\delta = 0$ contains $11$ components which meet normally away from the singularity. The generalized homological invariant evaluated on a simple closed loop around $\delta = 0$ is conjugate to
\[\begin{pmatrix} (-1)^{d+1} & 0 & 0 \\ 0 & -1 & 0 \\ 0 & 0 & -1 \end{pmatrix}.\]
This fibre type is called $\mathrm{III}^*$ if $d$ is odd and $\mathrm{I}_0^*$ if $d$ is even.
\end{proposition}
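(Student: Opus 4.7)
The plan is to mirror Proposition \ref{prop:I0*} closely; the only new phenomenon is the node of the cubic $A_1$ locus. First, I would verify that $\tilde g(w, y, z) := -\tfrac{1}{27}w^3 + wyz + y^2 z + yz^2$ has an ordinary double point at $[-3:1:1]$: one checks that $\tilde g$ and its three partial derivatives all vanish there, and that the $2 \times 2$ Hessian of the restriction $\tilde g(w, y, 1)$ at $(w, y) = (-3, 1)$ has determinant $\tfrac{1}{3}$ and so is non-degenerate.

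Away from this node, the geometry of $\overline{\calX}$ is literally that of the $\mathrm{I}_0^*$ case. The curves $C_1^{A_7}$, $C_2^{A_7}$, $C^{A_3}$, $C^{A_1}$ and the smooth part of the cubic $A_1$-curve are crepantly resolved by successive blow-ups; the triple point $Q$ is handled torically; and the two $(4,1)$-singularities $P$ are resolved via Lemma \ref{lem:singularity}. This yields the $11$ components shown in Figure \ref{fig:I0*}, meeting normally outside the preimage of the node.

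At the node I would pass to the affine chart $z = 1$ with coordinates $(w, x, y, \delta)$ centred on $p = (-3, 0, 1, 0)$. Since $y$ and $w$ are units near $p$, the analytic substitution $\tilde x = x \sqrt{y}$ reduces $F$ to the form $\tilde x^2 + \delta H(w, y, \delta)$, where $H|_{\delta = 0} = w\tilde g(w, y, 1)$ has a Morse-nondegenerate critical point at the node with critical value $0$. Applying the parametrized analytic Morse lemma and tracking the critical point via the implicit function theorem gives $H = c(\delta) + uv$ in analytic coordinates $(u, v)$ (depending on $\delta$), with $c(\delta) = 3\delta^d + O(\delta^{2d})$. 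After absorbing the resulting units into analytic rescalings of $\delta$ and of $u, v$, the local equation takes the normal form
\[\tilde x^2 + \delta uv + 3\delta^{d+1} = 0,\]
in which the two branches of the nodal cubic $A_1$-curve become $\{\tilde x = \delta = u = 0\}$ and $\{\tilde x = \delta = v = 0\}$. A single crepant blow-up of the codimension-$2$ center $\{\tilde x = \delta = 0\}$ resolves both branches simultaneously; it is crepant because the hypersurface has multiplicity $1$ along this center (the term $\delta u v$ lies in $(\tilde x, \delta) \setminus (\tilde x, \delta)^2$). In the $\delta$-chart, writing $\tilde x = \delta \xi$, the strict transform is
\[\delta\xi^2 + uv + 3\delta^d = 0,\]
which is smooth along the preimages of the two $A_1$-curves but, for $d \geq 2$, has an isolated singularity at the preimage of $p$. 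The linear change $s = (u+v)/2$, $t = (u-v)/(2i)$ converts $uv$ into $s^2 + t^2$, and a final rescaling of $\delta$ and $\xi$ absorbs the constant $3$, identifying the equation with the model $\{s^2 + t^2 + v(u^2 + v^{d-1}) = 0\}$. The slice $u = 0$ of this model is the $A_{d-1}$ Du Val surface singularity $s^2 + t^2 + v^d$, so the threefold singularity is compound Du Val and therefore terminal. For $d = 1$ the singular locus consists of the two points $(0, 0, \pm i, 0)$, each an ordinary threefold node, as one verifies directly by computing the quadratic form after centering. The monodromy claim follows at once from Proposition \ref{prop:generalmonodromy} with $b = 1$ and $\gamma(0) = -1$: since $\Gamma_{-1} = \mathrm{diag}(-1, 1, 1)$, we have $(-\mathrm{Id})\Gamma_{-1}^d = \mathrm{diag}((-1)^{d+1}, -1, -1)$.

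The main obstacle is the parametrized Morse reduction in the third paragraph, and in particular establishing that the leading term of $c(\delta)$ is exactly $3\delta^d$; this requires controlling both the motion of the critical point of $H_\delta$ and the value of $H$ there to leading order in $\delta$. Once the local normal form is in hand, the crepancy check, the single blow-up, and the identification with the target model are all direct computations.
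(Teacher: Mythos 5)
Your proposal is correct and takes essentially the same route as the paper: the paper's own proof simply states that the resolution proceeds as in the $\mathrm{I}_0^*$ case with minor adjustments for the nodal cubic and cites Proposition \ref{prop:generalmonodromy} for the monodromy, which is exactly the structure of your argument. Your parametrized Morse reduction at the node (with $c(\delta) = 3\delta^d + O(\delta^{2d})$), the single crepant blow-up of $\{\tilde x = \delta = 0\}$, and the cDV/terminality and $d=1$ node-splitting checks correctly fill in the details the paper leaves implicit.
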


\begin{proof} The resolution here proceeds analogously to the $\mathrm{I}_0^*$ case (Subsection \ref{sec:I0*}), with only minor adjustments to deal with the singular cubic curve; we therefore don't reproduce it here. The form of the generalized homological invariant follows from Proposition \ref{prop:generalmonodromy}.
\end{proof}

\subsection{Fibres with \texorpdfstring{$\gamma = 0$}{gamma = 0}} The reader may wish to compare the results in this section with \cite[Proposition 3.7]{cytfhrlpk3s}, which gives an analogous description of singular fibres for  threefolds fibred by $M_n$-polarized K3 surfaces with $n>1$.

\subsubsection{Type \texorpdfstring{$\mathrm{I}_d$}{Id}} Data: $\gamma = 0$, $d \geq 1$, $\det(\rho) = 1$. 

These data are realized by the family $\overline{\calX} \to \Delta$ given by restricting Equation \eqref{eq:X1} to a small disc $\Delta$ containing $\gamma = 0$ and its pull-back under $d$-fold covers. 

\begin{proposition} \label{prop:Id} In a neighbourhood of $\gamma = 0$, the threefold $\overline{\calX}$ is only singular along the four curves $C_1^{A_7}$, $C_2^{A_7}$, $C^{A_3}$, and $C^{A_1}$. After blowing up each curve $i$ times we are left with three isolated threefold nodes, which admit a small resolution to a smooth threefold with trivial canonical sheaf. The resulting smooth threefold has a semistable  singular fibre with three components over $\gamma = 0$.

If we pull back by an $d$-fold cover totally  ramified over $\gamma = 0$, then we may crepantly resolve all singularities to obtain a smooth threefold containing a semi\-stable fibre with $d^2 + 2$ components. The generalized homological invariant evaluated on a simple closed loop around the preimage of $\gamma = 0$ is conjugate to
\[\begin{pmatrix} 1 & d & \tfrac{d(d-1)}{2} \\ 0 & 1 & d \\ 0 & 0 & 1 \end{pmatrix}.\]
This fibre type is called $\mathrm{I}_d$.
\end{proposition}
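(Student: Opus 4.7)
The plan is to perform an explicit geometric analysis of the family \eqref{eq:X1} near $\gamma = 0$, following the pattern established in Subsection \ref{subsec:gammageneral}. Setting $\gamma = 0$ in Equation \eqref{eq:X1} factors the central fibre of $\overline{\calX}$ as
\[yz(w^2 + wy + wz + x^2) = 0,\]
the union of two planes $\{y = 0\}$, $\{z = 0\}$ and a quadric cone $Q := \{w^2 + wy + wz + x^2 = 0\}$ with apex $P_4 := (0{:}0{:}1{:}{-}1)$. These three components meet pairwise in the line $L := \{y = z = 0\}$ and two smooth conics on $Q$, and meet simultaneously at the two triple points of $L$ cut out by $w^2 + x^2 = 0$. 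A direct Jacobian computation confirms that, in a neighbourhood of $\gamma = 0$, the threefold $\overline{\calX}$ has no isolated singularities: its singular locus consists only of the four horizontal section curves.

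For $d = 1$, I would perform the prescribed $i$ blow-ups along each section curve. By a local computation at the four intersection points of the sections with the central fibre---where each section meets either a double curve or the apex of $Q$---one verifies that these blow-ups resolve the $A_i$ curves but produce three isolated ordinary threefold nodes. Each such conifold admits a small resolution, which is automatically crepant and so preserves triviality of the canonical sheaf; since small resolutions introduce no new divisors, the central fibre of the resulting smooth threefold retains exactly its three original components.

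For general $d \geq 1$, I would pull back by the cover $\gamma = \delta^d$. The horizontal $A_i$ curves contribute no new fibre components. Along each double curve of the central fibre, $\overline{\calX}$ is smooth with local form $\{uv = \gamma\}$, so the pullback acquires a curve of $A_{d-1}$ singularities contributing $d-1$ exceptional divisors per curve under crepant resolution. At each triple point, $\overline{\calX}$ is smooth with local model $\{abc = \gamma\}$; the pullback is the toric cyclic quotient singularity $\{abc = \delta^d\} \cong \bA^3/G$, with $G \cong (\bZ/d)^2$ acting on $\bA^3$ by diagonal characters with total weight zero. A standard toric calculation, equivalently the McKay correspondence, shows that the crepant resolution contributes exactly $\binom{d-1}{2}$ interior exceptional divisors per triple point beyond those coming from the three intersecting $A_{d-1}$-strata. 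The three conifolds from the $d=1$ analysis pull back to conifolds and again contribute no new divisors. Summing gives
\[3 + 3(d-1) + 2\binom{d-1}{2} = d^2 + 2\]
components. The main obstacle will be this local toric analysis at the triple points, where one must verify crepancy and correctly match the interior divisors with those arising from the three intersecting $A_{d-1}$-strata.

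Finally, the generalized homological invariant is immediate from Proposition \ref{prop:generalmonodromy}: since $\beta'$ can be chosen constant we have $b = 0$, and $\gamma$ ramifies to order $d$, giving monodromy $\Gamma_0^d$. Writing $N := \Gamma_0 - \mathrm{Id}$, one has $N^3 = 0$, hence $\Gamma_0^d = \mathrm{Id} + dN + \binom{d}{2}N^2$, which is exactly the stated upper triangular matrix.
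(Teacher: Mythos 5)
Your setup (the factorization of the central fibre, the Jacobian check that only the four section curves are singular, the $d=1$ blow-up analysis leaving three nodes) and your monodromy computation $\Gamma_0^d = \mathrm{Id} + dN + \binom{d}{2}N^2$ agree with the paper. Two points, one minor and one substantive. Minor: you assert that each of the three nodes "admits a small resolution" as if this were automatic; an ordinary threefold node always has an \emph{analytic} small resolution, but a projective one requires a Weil divisor through the node that is not Cartier. The paper supplies exactly this: each node lies on a double curve of the fibre, so blowing up one of the two fibre components through it gives the (crepant, divisor-free) small resolution.

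The substantive gap is in your general-$d$ count. You base-change the \emph{unresolved} Weierstrass family and assume the only sources of vertical exceptional divisors are generic points of the double curves (local form $\{uv=\delta^d\}$) and the two triple points (local form $\{abc=\delta^d\}$). But at the four points where the sections meet the central fibre the local model is neither: at the $C^{A_3}$ point it is $\{yz = \delta^d w^4\}$, and at the $C^{A_7}$ points $\{y\,(w(1+w+y)+x^2) = c\,\delta^d w^4\}$. For $d\ge 2$ the base-changed threefold is singular both along the horizontal section and along the double curve through such a point, and these curves of singularities meet there; you must verify that the crepant resolution over these confluences adds no further vertical divisors and that the $d-1$ divisors over each double curve extend irreducibly across them. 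Your shortcut "the three conifolds from the $d=1$ analysis pull back to conifolds" does not make sense as stated: the conifolds exist only after the $d=1$ blow-ups, and the corresponding residual singularities after a degree-$d$ base change are not ordinary nodes when $d\ge 2$ (e.g.\ the $C^{A_3}$ model leaves a residual $\{y''z''=\delta^d\}$-type locus). The paper avoids this entirely by first constructing the smooth semistable model for $d=1$ (normal crossings fibre with $3$ components, $3$ double curves, $2$ triple points) and only then base-changing, so that every point of the central fibre has local model $u=\delta^d$, $uv=\delta^d$ or $uvw=\delta^d$ and the count $3+3(d-1)+2\binom{d-1}{2}=d^2+2$ follows at once from the standard base-change result \cite[Proposition 1.2]{bgd7}. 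You could close the gap the same way, or by carrying out the missing local analysis at the four section points in the spirit of Lemma \ref{lem:singularity}.
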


\begin{proof} A  version of this result, using a different birational model for the threefold, follows from a result of Przyjalkowski \cite[Corollary 4.9]{cyctlgmsft}. We include a proof here for completeness.

The statement that the threefold $\overline{\calX}$ is only singular along the four curves $C_1^{A_7}$, $C_2^{A_7}$, $C^{A_3}$, and $C^{A_1}$ is an explicit computation. The fibre over $\gamma = 0$ has three components $\{y=0\}$, $\{z=0\}$, and $\{w^2 + wy + wz + x^2 = 0\}$. The curve $C^{A_1}$ intersects the interior of the component  $\{w^2 + wy + wz + x^2 = 0\}$ in an $A_1$ singularity, so this curve may be crepantly resolved by a single blow-up. The curve $C^{A_3}$ intersects the double curve $\{y=z=0\}$ and the singularity along this curve jumps to a $A_4$ at $\gamma = 0$; blowing this curve up three times leaves an isolated singularity at its intersection with $\gamma = 0$. Similarly, the curves $C^{A_7}_i$ intersect the two double curves on the component  $\{w^2 + wy + wz + x^2 = 0\}$ and the singularities along these curves jump to $A_8$'s at $\gamma = 0$; blowing these curves up seven times leave isolated singularities at their intersections with $\gamma = 0$. This leaves three isolated threefold nodes, each of which lies on a double curve; these admit small resolutions given by blowing up along components.

The statement about the covers follows immediately from \cite[Proposition 1.2]{bgd7}, and their generalized homological invariants are given by the $d$th power of the generalized homological invariant for ${\calX}_1$, as computed by Lemma \ref{lem:X1monodromy}.
\end{proof}

\subsubsection{Type \texorpdfstring{$\mathrm{I}_d^*$}{Id*}} Data: $\gamma = 0$, $d \geq 1$, $\det(\rho) = -1$. 

These data are realized by Equation \eqref{eq:2dimX1} when $\beta'$ has a simple pole and $\gamma$ vanishes to order $d$. To construct this, for $\delta \in \Delta$, we set $\beta'(\delta) = \frac{1}{\delta}$ and $\gamma =  \delta^d$ and clear denominators to give
\begin{equation} \frac{\delta^{d+1}}{27}w^4 + \delta w^2yz + \delta wy^2z + \delta wyz^2 + x^2yz = 0.\label{eq:Id*},\end{equation}
which defines a family $\overline{\calX} \to \Delta$. The central fibre $\delta = 0$ is shown in Figure \ref{fig:Id*unresolved}.

\begin{figure}
\begin{center}
\begin{tikzpicture}
\draw[very thick] (0,0)--(8,0);
\draw (0,0)--(4,6.95)--(8,0);
\draw[very thick] (4,6.95)--(4,2.31);
\draw[very thick] (0,0)--(4,2.31);
\draw[very thick] (4,2.31)--(8,0);
\draw [dashed] (4,0) -- (2,1.15);
\draw [dashed] (4,0) -- (6,1.15);

\filldraw[black]
(0,0) circle (3pt)
(4,0) circle (3pt)
(8,0) circle (3pt)
(4,6.95) circle (3pt)
;

\node[above] at (5.2,-0.1) {$A_1$};
\node[right] at (7.9,0.3) {$P$};
\node[right] at (-0.4,0.3) {$P$};
\node[above] at (5.3,1.7) {$A_{2d+1}$};
\node[above] at (2.6,1.7) {$A_{2d+1}$};
\node[right] at (4,5) {$A_d$};
\node[below] at (3,4) {$y=0$};
\node[below] at (5,4) {$z=0$};
\node at (4,1.1) {$x = 0$};
\node[above] at (2,1.15) {$Q$};
\node[above] at (6,1.15) {$Q$};
\node[above right] at (4,2.2) {$R$};
\node[below] at (4,-0.1) {$S$};
\end{tikzpicture}
\end{center}
\caption{The fibre $\delta = 0$ in Equation \eqref{eq:Id*}.}
\label{fig:Id*unresolved}
\end{figure}

\begin{proposition} The threefold defined by Equation \eqref{eq:Id*} admits a crepant resolution to a smooth threefold. The central fibre of this threefold  contains $2d^2 + 9d + 10$ components which meet normally. The generalized homological invariant evaluated on a simple closed loop around $\delta = 0$ is conjugate to
\[\begin{pmatrix} -1 & -d &- \tfrac{d(d-1)}{2} \\ 0 & -1 & -d \\ 0 & 0 & -1 \end{pmatrix}.\]
This fibre type is called $\mathrm{I}_d^*$
\end{proposition}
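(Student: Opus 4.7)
The proof strategy parallels that of Proposition \ref{prop:I0*}, but with additional combinatorial complexity coming from the higher multiplicities produced when $\gamma$ vanishes to order $d$. The central fibre shown in Figure \ref{fig:Id*unresolved} consists of the three components $\{y=0\}$, $\{z=0\}$, $\{x=0\}$ meeting along a triangle of curves carrying various $A_n$ singularities of $\overline{\calX}$, together with the four special points $P$, $Q$, $R$, $S$ where the singularity behaviour degenerates further. My plan is to verify these singularity types by direct local computation, then to crepantly resolve the smooth $A_n$ curves first and handle the special points afterwards.

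For the curve resolutions, I would blow up the $A_1$ curve along the bottom edge, the two $A_{2d+1}$ curves running from the bottom corners up to $R$, the $A_d$ curve from $R$ to the top vertex, and the two $A_1$ curves in the interior of $\{x=0\}$ connecting $S$ to the two points $Q$. Away from the four special points this is routine and produces the expected exceptional fibre components meeting the original components normally. The horizontal curves $C_1^{A_7}$, $C_2^{A_7}$, $C^{A_3}$, and $C^{A_1}$ are then crepantly resolved in the usual way and contribute no additional fibre components.

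Next I would treat each special point. By local coordinate analysis the two points $P$ should appear as singularities of type $(m,n)$ in the sense of Lemma \ref{lem:singularity} (with $m,n$ depending on $d$), so that lemma produces the required exceptional divisors with normal crossings against the strict transforms of the neighbouring components; the two points $Q$ are expected to be similar. The point $S$ lies at the transverse intersection of three curves of $A_1$ singularities as in the $\mathrm{I}_0^*$ case, and should admit exactly the same toric crepant resolution (refining the dual cone of $\{stu = v^2\} \subset \bA^4$) that was used in Proposition \ref{prop:I0*}. The point $R$, at the confluence of the two $A_{2d+1}$ curves and the $A_d$ curve inside $\{x=0\}$, requires the most delicate analysis, but should likewise resolve crepantly once the three curves meeting there have been blown up iteratively. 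After all the resolutions are performed, a careful accounting of the new fibre components should give the claimed total of $2d^2 + 9d + 10$, and crepancy is preserved at every step since each blow-up centre is a smooth curve of Du Val singularities, a toric centre, or a centre covered by Lemma \ref{lem:singularity}. The homological invariant then follows immediately from Proposition \ref{prop:generalmonodromy}: $\beta'$ has a simple pole at $\delta=0$ and $\gamma$ vanishes to order $d$ at $\gamma(0)=0$, so the invariant is conjugate to $(-\mathrm{Id})\Gamma_0^d$, and a direct computation of the $d$th power of the Jordan block $\Gamma_0$ from Lemma \ref{lem:X1monodromy} produces precisely the stated matrix.

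The main obstacle will be the explicit resolution at $R$: three curves of singularities of different orders ($A_{2d+1}$, $A_{2d+1}$, and $A_d$) collide on a single component, and one must check that iterated blow-ups along these curves leave only singularities that are themselves crepantly resolvable, and that the resulting divisorial configuration meets normally with the correct multiplicities. Bookkeeping the component count at $R$ as a function of $d$ is the step most likely to require computer algebra assistance, in the spirit of the other case analyses in this section.
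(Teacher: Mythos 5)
Your overall strategy coincides with the paper's: resolve the $A_k$ curves away from the special points, treat $S$ by the toric method of Proposition \ref{prop:I0*}, treat $P$ and $Q$ via Lemma \ref{lem:singularity}, and get the monodromy from Proposition \ref{prop:generalmonodromy} as $(-\mathrm{Id})\Gamma_0^d$. That part is fine. But the heart of the proposition --- the point $R$ and the resulting component count $2d^2+9d+10$ --- is exactly the part you defer (``should likewise resolve crepantly once the three curves meeting there have been blown up iteratively'', ``a careful accounting \ldots should give the claimed total''), so as written the proof has a genuine gap: nothing in your argument produces the quadratic term $2d^2$, and ``iterated blow-ups along the three curves'' is not by itself a procedure that visibly terminates or stays crepant.

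What is actually needed at $R$ is an induction on $d$ driven by the local equation: $R$ has local form $\{stu^2+stv+v^{d+1}=0\}\subset\bA^4[s,t,u,v]$, and the correct move is to blow up the vertical $A_d$ curve (not all three curves indiscriminately). This produces two points $R'$ of type $(d+1,d-1)$, which Lemma \ref{lem:singularity} resolves with $2d-2$ new components each and normal crossings against the four hyperplanes, together with one new point $R''$ of the \emph{same} local form with $d$ replaced by $d-2$; one then needs the base cases $d=1$ (where the $R'$ are of type $(2,0)$, contributing no new components, and one checks the third component through them is $\{t=s^2+uv^2=0\}$ so crossings are still normal) and $d=2$ (where the $R'$ are of type $(3,1)$). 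This recursion yields $2d^2-1$ components from $R$. You also need to pin down the types at $P$ and $Q$ --- they are $(d+1,4)$ and $(d+1,0)$ respectively, giving $d+3$ and $d-1$ components each --- since the final count is assembled as $3$ original components, $5d+4$ from the $A_k$ curves, $2(d+3)$ from the two $P$'s, $2(d-1)$ from the two $Q$'s, and $2d^2-1$ from $R$, summing to $2d^2+9d+10$. Without the inductive analysis at $R$ and this explicit bookkeeping, the stated component count and the normal-crossings claim are unsupported; the monodromy computation, by contrast, is complete as you give it.
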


\begin{proof}Away from the points $P$, $Q$ and $R$, and $S$, the resolution is given by blowing up the curves of singularities. We deal with each of these points in turn. 

The point $S$ is a transverse intersection of three curves of $A_1$ singularities, which may be simultaneously resolved by the method of \cite[Section 1]{bgd7}; this method is detailed in the proof of Proposition \ref{prop:I0*}. 

The points $P$ are singularities of type $(d+1,4)$ and the points $Q$ are singularities of type $(d+1,0)$, which we may resolve by Lemma \ref{lem:singularity}. The components that pass through them are the hyperplanes from this lemma, so the resulting configurations of divisors meet normally. Each point $P$ gives rise to $(d+3)$ new components and each point $Q$ gives rise to $(d-1)$ new components, in addition to those coming from the curves of $A_k$ singularities. 

The point $R$ is much more difficult: it is an intersection between curves of types $A_{2d+1}$, $A_{2d+1}$, and $A_d$, with local form
\begin{equation} \label{eq:R} \{stu^2 + stv + v^{d+1} = 0\} \subset \bA^4[s,t,u,v].\end{equation} 
The strategy here is to blow-up the vertical curve of type $A_d$ from Figure \ref{fig:Id*unresolved}. This will give rise to a number of new components and a new singularity of the same type. We may then iterate the procedure inductively to resolve all of the singularities.

We first need a couple of base cases: $d=1$ and $d=2$. After blowing up the curve of type $A_d$ we obtain the configurations shown in Figures \ref{fig:Rforn=1} and \ref{fig:Rforn=2}.

In the case $d=1$, after blowing up the vertical curve of type $A_1$ from Figure \ref{fig:Id*unresolved} we are left with a horizontal curve of type $A_1$ and two curves of types $A_3$, which are the strict transforms of the curves of types $A_3$ from before the blow-up, as shown in Figure \ref{fig:Rforn=1}. These curves meet in two points, marked $R'$ in Figure \ref{fig:Rforn=1}, which are singularities of type $(2,0)$; these may be resolved by Lemma \ref{lem:singularity}. Two of the components meeting at these points are the hyperplanes from Lemma \ref{lem:singularity} and, in the notation of that lemma, the third is the hypersurface $\{t = s^2 + uv^2 = 0\}$, so the resulting configuration of divisors meets normally. This lemma shows that the points $R'$ do not give rise to any additional exceptional components, so we only obtain $1$ additional component from the point $R$ in this case, given by blowing up the horizontal $A_1$ curve from Figure \ref{fig:Rforn=1}. The resolved fibre is illustrated in Figure \ref{fig:I1*}.

\begin{figure}
\begin{center}
\begin{tikzpicture}[scale=0.9]
\draw [dotted] (0,0)--(12,0);
\draw [dotted] (0,0)--(4,6.95)--(8,6.95)--(12,0);
\draw (4,6.95)--(4,2.31);
\draw (8,6.95)--(8,2.31);
\draw[very thick] (4,2.31)--(8,2.31);
\draw[very thick] (0,0)--(4,2.31);
\draw[very thick] (8,2.31)--(12,0);

\node[above] at (9.1,1.7) {$A_{3}$};
\node[above] at (2.8,1.7) {$A_{3}$};
\node[above] at (6,1.7) {$A_1$};
\node[above left] at (4.1,2.2) {$R'$};
\node[above right] at (7.9,2.2) {$R'$};
\node[below] at (3,4) {$y=0$};
\node[below] at (9,4) {$z=0$};
\node at (6,1.1) {$x = 0$};
\end{tikzpicture}
\end{center}
\caption{Blow-up of point of type $R$ with $d=1$.}
\label{fig:Rforn=1}
\end{figure}

\begin{figure}
\begin{center}
\begin{tikzpicture}[scale=0.9]
\draw [dotted] (0,0)--(12,0);
\draw [dotted] (0,0)--(4,6.95)--(8,6.95)--(12,0);
\draw (4,6.95)--(4,2.31);
\draw (8,6.95)--(8,2.31);
\draw [very thick] (4,2.31)--(8,2.31);
\draw [very thick] (0,0)--(4,2.31);
\draw (4,2.31)--(6,5.77);
\draw (6,5.77)--(8,2.31);
\draw [very thick] (8,2.31)--(12,0);
\draw (6,5.77)--(6,6.95);

\draw[dashed] (4,2.31) .. controls (6,3.5) .. (8,2.31);

\node[above] at (9.1,1.7) {$A_{5}$};
\node[above] at (2.8,1.7) {$A_{5}$};
\node[above] at (6,1.7) {$A_1$};
\node[below] at (3,4) {$y=0$};
\node[below] at (9,4) {$z=0$};
\node at (6,1.1) {$x = 0$};
\node[above left] at (4.1,2.2) {$R'$};
\node[above right] at (7.9,2.2) {$R'$};
\end{tikzpicture}
\end{center}
\caption{Blow-up of point of type $R$ with $d=2$.}
\label{fig:Rforn=2}
\end{figure}

\begin{figure}
  \begin{center}
\begin{tikzpicture}[scale=0.33]
\draw (27.5,3.15)--(18.6,18.5)--(17.7,18)--(16.8,18.5)--(15.2,18.5)--(14.3,18)--(13.4,18.5)--(4.5,3.15);

\draw (17.7,8.2)--(17.7,18);
\draw (14.3,8.2)--(14.3,18);
\draw [lightgray] (18.6,18.5)--(18.6,20);
\draw [lightgray] (16.8,18.5)--(16.8,20);
\draw [lightgray] (15.2,18.5)--(15.2,20);
\draw [lightgray] (13.4,18.5)--(13.4,20);

\draw (12.5,-2.05)--(19.5,-2.05);
\draw (14.5,-5.5)--(17.5,-5.5);

\draw (14.3,8.2)--(15.2,6.65)--(14.3,5.1)--(15.2,3.35);
\draw (17.7,8.2)--(16.8,6.65)--(17.7,5.1)--(16.8,3.35);
\draw (15.2,6.65)--(16.8,6.65);
\draw (15.2,3.35)--(16.8,3.35);

\draw (3.5,2)--(4.5,3.15)--(5.5,3.15)--(14.3,8.2);
\draw (5.5,3.15)--(6.5,1.4);
\draw (3.5,2)--(4.5,0.25);

\draw (4.5,0.25)--(5.85,0.25)--(6.5,1.4)--(7.85,1.4)--(14.3,5.1);
\draw (7.85,1.4)--(8.5,0.25);
\draw (5.85,0.25)--(6.5,-0.9);

\draw (6.5,-0.9)--(7.85,-0.9)--(8.5,0.25)--(9.85,0.25)--(15.2,3.35);
\draw (9.85,0.25)--(10.5,-0.9);
\draw (7.85,-0.9)--(8.5,-2.05);

\draw (8.5,-2.05)--(9.85,-2.05)--(10.5,-0.9)--(12.5,-2.05);
\draw (9.85,-2.05)--(10.5,-3.2);

\draw (10.5,-3.2)--(11.85,-3.2)--(12.5,-2.05);
\draw (11.85,-3.2)--(12.5,-4.35);

\draw (12.5,-4.35)--(14.5,-5.5);

\draw [lightgray] (1.5,0.85)--(3.5,2);
\draw [lightgray] (2.5,-0.9)--(4.5,0.25);
\draw [lightgray] (4.5,-2.05)--(6.5,-0.9);
\draw [lightgray] (6.5,-3.2)--(8.5,-2.05);
\draw [lightgray] (8.5,-4.35)--(10.5,-3.2);
\draw [lightgray] (10.5,-5.5)--(12.5,-4.35);
\draw [lightgray] (12.5,-6.65)--(14.5,-5.5);

\draw (28.5,2)--(27.5,3.15)--(26.5,3.15)--(17.7,8.2);
\draw (26.5,3.15)--(25.5,1.4);
\draw (28.5,2)--(27.5,0.25);

\draw (27.5,0.25)--(26.15,0.25)--(25.5,1.4)--(24.15,1.4)--(17.7,5.1);
\draw (24.15,1.4)--(23.5,0.25);
\draw (26.15,0.25)--(25.5,-0.9);

\draw (25.5,-0.9)--(24.15,-0.9)--(23.5,0.25)--(22.15,0.25)--(16.8,3.35);
\draw (22.15,0.25)--(21.5,-0.9);
\draw (24.15,-0.9)--(23.5,-2.05);

\draw (23.5,-2.05)--(22.15,-2.05)--(21.5,-0.9)--(19.5,-2.05);
\draw (22.15,-2.05)--(21.5,-3.2);

\draw (21.5,-3.2)--(20.15,-3.2)--(19.5,-2.05);
\draw (20.15,-3.2)--(19.5,-4.35);

\draw (19.5,-4.35)--(17.5,-5.5);

\draw [lightgray] (30.5,0.85)--(28.5,2);
\draw [lightgray] (29.5,-0.9)--(27.5,0.25);
\draw [lightgray] (27.5,-2.05)--(25.5,-0.9);
\draw [lightgray] (25.5,-3.2)--(23.5,-2.05);
\draw [lightgray] (23.5,-4.35)--(21.5,-3.2);
\draw [lightgray] (21.5,-5.5)--(19.5,-4.35);
\draw [lightgray] (19.5,-6.65)--(17.5,-5.5);

\filldraw [lightgray] (11.5,6)--(11.5,4.9)--(10.75,6.2)--(10.75,7.3);
\draw (11.5,4.9)--(10.75,6.2)--(10.75,7.3);

\filldraw[white] (1.75,0.4)--(5.25,2.4)--(5.25,1.85)--(4.25,0.7)--(2.25,-0.45)--(1.75,0.4);
\filldraw[lightgray] (5.25,1.85)--(6.25,1.85)--(12.5,5.45) .. controls (13.5,6) .. (14.5,5.45)--(15.6,6)--(15,6.3) .. controls (13.5,7.15) .. (12,6.3) --(5.25,2.4)--(5.25,1.85);
\draw [lightgray] (5.25,2.4)--(1.75,0.4);
\draw [lightgray] (4.25,0.7)--(2.25,-0.45);
\draw (4.25,0.7)--(5.25,1.85)--(6.25,1.85)--(12.5,5.45);
\draw (5.25,2.4)--(12,6.3);
\draw (5.25,2.4)--(5.25,1.85);
\draw (15,6.3)--(15.6,6);
\draw (14.5,5.45)--(15.6,6);
\draw (12,6.3) .. controls (13.5,7.15) .. (15,6.3);
\draw (12.5,5.45) .. controls (13.5,6) .. (14.5,5.45);

\filldraw [lightgray] (20.5,6)--(20.5,4.9)--(21.25,6.2)--(21.25,7.3);
\draw (20.5,4.9)--(21.25,6.2)--(21.25,7.3);

\filldraw[white] (30.15,0.4)--(26.75,2.4)--(26.75,1.85)--(27.75,0.7)--(29.75,-0.45)--(30.25,0.4);
\filldraw[lightgray] (26.75,1.85)--(25.75,1.85)--(19.5,5.45) .. controls (18.5,6) .. (17.5,5.45)--(16.4,6)--(17,6.3) .. controls (18.5,7.15) .. (20,6.3) --(26.75,2.4)--(26.75,1.85);
\draw [lightgray] (26.75,2.4)--(30.25,0.4);
\draw [lightgray] (27.75,0.7)--(29.75,-0.45);
\draw (27.75,0.7)--(26.75,1.85)--(25.75,1.85)--(19.5,5.45);
\draw (26.75,2.4)--(20,6.3);
\draw (26.75,2.4)--(26.75,1.85);
\draw (17,6.3)--(16.4,6);
\draw (17.5,5.45)--(16.4,6);
\draw (20,6.3) .. controls (18.5,7.15) .. (17,6.3);
\draw (19.5,5.45) .. controls (18.5,6) .. (17.5,5.45);

\filldraw[white] (16.5,-7)--(16.5,-3.275)--(16,-3.775)--(15.5,-5.5)--(15.5,-7)--(16.5,-7);
\filldraw[lightgray] (16,-3.775)--(15.5,-2.05)--(16.5,-2.05)--(16.5,-3.275)--(16,-3.775);
\draw [lightgray] (15.5,-7)--(15.5,-5.5);
\draw [lightgray] (16.5,-7)--(16.5,-3.275);
\draw (15.5,-5.5)--(16,-3.775)--(15.5,-2.05);
\draw (16.5,-3.275)--(16.5,-2.05);
\draw (16,-3.775)--(16.5,-3.275);

\filldraw [lightgray] (15.5,-2.05)--(11.5,4.9)--(11.5,6)--(15.5,-0.95);
\filldraw [lightgray] (16.5,-2.05)--(20.5,4.9)--(20.5,6)--(16.5,-0.95);
\filldraw [lightgray] (15.5,-2.05)--(13.6,1.25)--(13.6,2.35) .. controls (16,-1.8) .. (18.4,2.35)--(18.4,1.25)--(16.5,-2.05);
\draw (15.5,-2.05)--(11.5,4.9)--(11.5,6);
\draw (10.75,7.3)--(13.6,2.35);
\draw (16.5,-2.05)--(20.5,4.9)--(20.5,6);
\draw (21.25,7.3)--(18.4,2.35);
\draw (13.6,2.35) .. controls (16,-1.8) .. (18.4,2.35);

\node at (16,10) {$1$};
\node at (20,10) {$1$};
\node at (12,10) {$1$};

\node at (16,4.8) {$2$};
\node at (16,1) {$2$};
\node at (14.5,-3.775) {$1$};
\node at (16,-1.5) {$1$};

\node at (11,2) {$2$};
\node at (9.175,-0.9) {$1$};
\node at (7.175,0.25) {$1$};
\node at (10,3.3) {$2$};
\node at (5.5,1) {$1$};
\node at (11.175,-2.35) {$1$};
\node at (9.3,4.15) {$1$};

\node at (21,2) {$2$};
\node at (22.825,-0.9) {$1$};
\node at (24.825,0.25) {$1$};
\node at (22,3.3) {$2$};
\node at (26.5,1) {$1$};
\node at (20.825,-2.35) {$1$};
\node at (22.7,4.15) {$1$};
\end{tikzpicture}
\caption{Fibre of type $\mathrm{I}_1^*$.}
\label{fig:I1*}
  \end{center}
\end{figure}

In the case $d =2$, after blowing up the vertical curve of $A_2$ singularities from Figure \ref{fig:Id*unresolved} we are left with the configuration shown in Figure \ref{fig:Rforn=2}. The points labelled $R'$ are singularities of type $(3,1)$, which we may resolve by Lemma \ref{lem:singularity} to give $2$ new components from each. The four components that meet at them are the four hyperplanes from this lemma, so the resulting configuration of divisors meets normally. All of the remaining curves resolve in the expected way. We thus obtain $7$ additional components from the point $R$ in this case (the central triangle, two from the curves of $A_1$'s, and two from each of the two points $R'$).

Now we look at the general case $d > 2$. After blowing up the vertical curve of $A_d$ singularities  from Figure \ref{fig:Id*unresolved}, we are left with the configuration shown in Figure \ref{fig:generalR}. In this picture, the points labelled $R'$ are singularities of type $(d+1,d-1)$, which we may resolve by Lemma \ref{lem:singularity} to give $2d-2$ new components from each. The four components that meet at them are the four hyperplanes from this lemma, so the resulting configuration of divisors meets normally. The point $R''$ is an intersection between curves of types $A_{2d-3}$, $A_{2d-3}$, and $A_{d-2}$, which has local form
\[\{stu^2 + stv + v^{d-1} = 0\} \subset \bA^4[s,t,u,v].\]
This is the same setting as Equation \eqref{eq:R}, with $d$ reduced by $2$. The complete resolution of $R$  proceeds by induction and we find that $R$ gives rise to $2d^2 - 1$ components in total.

\begin{figure}
\begin{center}
\begin{tikzpicture}[scale=0.9]
\draw [dotted] (0,0)--(12,0);
\draw [dotted] (0,0)--(4,6.95)--(8,6.95)--(12,0);
\draw (4,6.95)--(4,2.31);
\draw  (8,6.95)--(8,2.31);
\draw [very thick] (4,2.31)--(8,2.31);
\draw [very thick] (0,0)--(4,2.31)--(6,5.77);
\draw [very thick] (6,5.77)--(8,2.31)--(12,0);
\draw [very thick] (6,5.77)--(6,6.95);

\node[above] at (9.3,1.7) {$A_{2d+1}$};
\node[above] at (2.7,1.7) {$A_{2d+1}$};
\node[above] at (6,1.7) {$A_1$};
\node[below] at (3,4) {$y=0$};
\node[below] at (9,4) {$z=0$};
\node at (6,1.1) {$x = 0$};
\node[above left] at (6.1,5.6) {$R''$};
\node[above left] at (4.1,2.2) {$R'$};
\node[above right] at (7.9,2.2) {$R'$};
\node[above] at (7.2,4.5) {$A_{2d-3}$};
\node[above] at (4.8,4.5) {$A_{2d-3}$};
\node[right] at (5.9,6.45) {$A_{d-2}$};
\end{tikzpicture}
\end{center}
\caption{Blow-up of point of type $R$ with $d> 2$.}
\label{fig:generalR}
\end{figure}

After all blow-ups, we obtain a smooth threefold with trivial canonical sheaf, whose central fibre is normal crossings and contains:
\begin{itemize} 
\item $3$ components as the strict transforms of the original $3$;
\item $5d + 4$ components from the curves of $A_k$ singularities;
\item $2(d+3)$ components from the points $P$;
\item $2(d-1)$ components from the points $Q$;
\item $2d^2 -1$ components from the point $R$.
\end{itemize}
Summing up, we obtain $2d^2 + 9n + 10$ components. The form of the generalized homological invariant evaluated around a closed loop follows from Proposition \ref{prop:generalmonodromy}.\end{proof}

Finally, it may be checked that after pulling a fibre of type $\mathrm{I}_d^*$ back by a double cover ramified at $\delta = 0$, normalizing, and contracting exceptional components, one obtains a fibre of type $\mathrm{I}_{2d}$, as expected.

\subsection{Fibres with \texorpdfstring{$\gamma = \infty$}{gamma = infinity}}\label{subsec:gammainfinity}

The most difficult fibres occur over $\gamma = \infty$. The reader may wish to compare the results in this section with \cite[Proposition 3.8]{cytfhrlpk3s}, which gives an analogous description of singular fibres for  threefolds fibred by $M_n$-polarized K3 surfaces with $n>1$.

\subsubsection{Types \texorpdfstring{$\mathrm{II}^*$, $\mathrm{IV}^*$, $\mathrm{I}_0^*$}{II*, IV*, I0*}} Data: $\gamma = \infty$, $d \in \{1,2,3\}$, $\det(\rho) = (-1)^d$. 

These data are most easily realized by the family $\overline{\calX} \to \Delta$ given by restricting Equation \eqref{eq:X1} to a small disc $\Delta$ containing $\gamma = \infty$ and its pull-back under $d$-fold covers. However, we instead consider a more general family $\overline{\calX} \to \Delta$ defined by
\begin{equation} \label{eq:infinity1} \{\tfrac{1}{27}w^4+ \delta^{d+k}w^2yz + \delta^dwy^2z + \delta^dwyz^2 + \delta^dx^2yz   = 0\} \subset \bP^3[w,x,y,z] \times \Delta,\end{equation}
where $k \geq 0$ and $d \in \{1,2,3\}$. Note that if $k = 0$ this family is obtained from Equation \eqref{eq:X1} by setting $\gamma = \frac{1}{\delta^d}$, then clearing denominators, but if $k > 0$ it is not in one of our standard forms. The choice of a family that is not in Weierstrass form may seem unexpected, but we will require the extra generality in order to complete an induction argument for $d > 3$ in Section \ref{sec:modularity}.

\begin{proposition} \label{prop:infinityfibres1}  Consider the threefold $\overline{\pi}\colon\overline{\calX} \to \Delta$ defined by Equation \eqref{eq:infinity1}. For any $k \geq 0$, after resolving the singularities of $\overline{\calX}$ we may perform a birational modification, affecting only the fibre over $\delta = 0$, so that the resulting threefold is smooth and has trivial canonical sheaf. In the cases where $d = 1$, $2$, or $3$ the resulting fibre contains $53$, $22$, or $11$ components, respectively, and these components meet normally. The generalized homological invariant evaluated on a simple closed loop around $\delta = 0$ is conjugate to
\[\begin{pmatrix}(-1)^d & 0 & 0 \\ 0 & \omega^d & 0 \\ 0 & 0 & \omega^{5d} \end{pmatrix},\]
where $\omega$ is a primitive sixth root of unity. For $d = 1$, $2$, and $3$ the resulting fibre type is called $\mathrm{II}^*$, $\mathrm{IV}^*$, and $\mathrm{I}_0^*$ respectively.
\end{proposition}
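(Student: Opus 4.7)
My approach has two parts: a direct computation of the monodromy, followed by an explicit resolution of the central fibre.

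For the monodromy, the substitution $w \mapsto \delta^{-k} w$ in Equation \eqref{eq:infinity1}, followed by division by $\delta^{d+3k}$, transforms the family into the Weierstrass form \eqref{eq:2dimX1} with $\gamma(\delta) = \delta^{-(d+3k)}$ and $\beta'(\delta) = \delta^{k}$. By Proposition \ref{prop:generalmonodromy}, the transcendental monodromy around $\delta = 0$ is therefore conjugate to $(-\mathrm{Id})^k\, \Gamma_{\infty}^{d+3k}$. Using $\omega^3 = -1$, one computes
\[
\Gamma_{\infty}^{d+3k} \;=\; (-1)^k\, \mathrm{diag}\bigl((-1)^d,\, \omega^d,\, \omega^{5d}\bigr),
\]
and the two signs $(-1)^k$ cancel, giving exactly the matrix claimed in the proposition, independent of $k$.

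For the resolution, observe that the central fibre is initially the quadruple hyperplane $4 \cdot \{w = 0\}$, and for $d > 1$ the threefold $\overline{\calX}$ is singular along the horizontal plane $\{w = \delta = 0\}$ in addition to the four horizontal curves $C_1^{A_7}$, $C_2^{A_7}$, $C^{A_3}$, $C^{A_1}$ inherited from Section \ref{sec:genfibres}. The plan is to perform a sequence of blow-ups of strata contained in $\{w = 0\}$, interleaved with blow-ups of these four horizontal curves, so as to produce a birational model in which the threefold has at worst terminal singularities and the central fibre is a union of reduced components meeting normally. The depth of this sequence grows with $k$, but I expect the extra components introduced for $k > 0$ to be contractible in a final birational modification affecting only the fibre over $\delta = 0$, leaving a configuration whose homeomorphism type depends only on $d$.

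The main obstacle will be the case-by-case classification of the isolated threefold singularities that appear at the intersections of the horizontal curves with the newly produced exceptional components, and at points where several chains of ADE singularities coalesce. As in the analyses of Propositions \ref{prop:I0*} and type $\mathrm{I}_d^*$, I expect these points to fall into three standard classes: transverse intersections of three curves of $A_1$-singularities (resolvable by the toric method used in the proof of Proposition \ref{prop:I0*}), singularities of type $(m,n)$ (resolvable crepantly by Lemma \ref{lem:singularity}, with the four bordering hyperplanes meeting normally after resolution), and, possibly, more complicated points analogous to the point $R$ of the $\mathrm{I}_d^*$ analysis at which several chains of ADE singularities meet, which would yield to an inductive argument on the number of coalescing chains. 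Once these singularities are handled, crepancy of all the blow-ups gives $\omega_{\calX'} \cong \calO_{\calX'}$, and the component counts of $53$, $22$, $11$ for $d=1,2,3$ follow by summing the contributions from the strict transforms of the original components, the chains of exceptional divisors coming from the four horizontal curves of singularities, and the components produced at the various isolated singular points. The delicate and most error-prone step in this book-keeping is identifying exactly which exceptional divisors are removed by the final birational contraction, which is what forces the final count to depend only on $d$ and not on $k$.
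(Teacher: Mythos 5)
Your monodromy argument is fine, and it handles general $k$ cleanly: after $w \mapsto \delta^{-k}w$ one divides by $\delta^{d-k}$ (not $\delta^{d+3k}$, though your conclusion is unaffected) to reach the Weierstrass form \eqref{eq:2dimX1} with $\beta' = \delta^{k}$ and $\gamma = \delta^{-(d+3k)}$, and then Proposition \ref{prop:generalmonodromy} together with $\omega^3 = -1$ gives the stated matrix independently of $k$. This is essentially the same mechanism the paper relies on, just made explicit.

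The rest of the proposal, however, is a plan rather than a proof, and the plan covers exactly the part of the proposition that carries its content: the existence of the crepant modification, the normal-crossings statement, and the component counts $53$, $22$, $11$. You never carry out the blow-ups, never identify the singularities that actually occur, and never perform the book-keeping; each of the decisive assertions is prefaced by ``I expect.'' In the paper's computation these steps are concrete: for $d=1$, four blow-ups of the central fibre inside $\bP^3 \times \Delta$ produce a nest of components with curves of $A_1,\ldots,A_5$ singularities and pairs of points of types $(2,1)$, $(3,2)$, $(4,3)$, which Lemma \ref{lem:singularity} resolves with $1$, $3$, $5$ new components respectively, yielding $53$; for $d=2$ the threefold is non-normal along the central fibre, two ambient blow-ups normalize it and leave points of types $(4,2)$ and $(2,0)$, yielding $22$; for $d=3$ the coordinate change $w \mapsto \delta w$ reduces the central fibre to exactly that of Equation \eqref{eq:I0*}, yielding the $\mathrm{I}_0^*$ fibre with $11$ components. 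None of this is recovered or replaced by an argument in your proposal. Moreover, your proposed mechanism for $k>0$ --- that extra components appear whose number grows with $k$ and are then removed by a final contraction, so that the answer ``should'' depend only on $d$ --- is both unjustified and not what happens: the term $\delta^{d+k}w^2yz$ is subordinate throughout the resolution, the same central fibre is obtained directly for every $k \geq 0$, and no contraction of extra exceptional components is needed (the only modifications of the central fibre are of the type $w \mapsto \delta w$ and the normalization for $d=2,3$). As it stands, the counts, multiplicities, and the normal-crossings claim remain unestablished, so the proposal has a genuine gap.
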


\begin{remark} We note that descriptions of the singular fibres for $d=1,2$ have previously appeared in work of Przyjalkowski \cite[Theorem 22, cases $X_2$ and $V_1$]{wlgmsft}, albeit using a somewhat different family over $\Delta$, and explicit computations of the fibre in the case $d=1$ have appeared in work of Iliev, Katzarkov, and Przyjalkowski \cite[Section 5.3]{dscnr}.
\end{remark}

\begin{proof}[Proof of Proposition \ref{prop:infinityfibres1}]  We begin with the case $d=1$. After blowing up the fibre over  $\delta = 0$ four times in the fourfold ambient space we obtain the configuration shown in Figure \ref{fig:d=1}.  

\begin{figure}
  \begin{center}
\begin{tikzpicture}[scale=0.7]
\draw [very thick] (0,0)--(16,0);
\draw (0,0)--(8,13.85)--(16,0);
\draw [very thick] (2,1.15)--(14,1.15);
\draw (14,1.15)--(8,11.55)--(2,1.15);
\draw [very thick] (4,2.3)--(12,2.3);
\draw (12,2.3)--(8,9.25)--(4,2.3);
\draw (6,3.45)--(10,3.45)--(8,6.95)--(6,3.45);
\draw [very thick] (0,0)-- (4,2.3);
\draw (4,2.3)--(6,3.45);
\draw [dashed] (8,0)--(8,2.3);
\draw [very thick] (8,13.85)--(8,9.23);
\draw (8,9.25)--(8,6.95);
\draw (10,3.45)--(12,2.3);
\draw [very thick] (12,2.3)--(16,0);
\draw [dashed] (8,2.3) .. controls (9,3.3) .. (12,2.3);
\draw [dashed] (4,2.3) .. controls (7,3.3) .. (8,2.3);

\filldraw[black]
(0,0) circle (3pt)
(16,0) circle (3pt)
(8,0) circle (3pt)
(8,13.85) circle (3pt)
;

\node[above] at (9.5,-0.1) {$A_1$};
\node[above] at (9.5,1.05) {$A_1$};
\node[above] at (9.5,2.2) {$A_1$};
\node[right] at (7.9,9.8) {$A_1$};
\node[right] at (7.9,12.1) {$A_2$};
\node[right] at (1,0.5) {$A_5$};
\node[right] at (14,0.5) {$A_5$};
\node[right] at (3,1.65) {$A_3$};
\node[right] at (12,1.65) {$A_3$};
\node[right] at (11.9,2.55) {$P_1$};
\node[right] at (13.9,1.4) {$P_2$};
\node[right] at (15.8,0.3) {$P_3$};
\node[right] at (3.25,2.55) {$P_1$};
\node[right] at (1.25,1.4) {$P_2$};
\node[right] at (-0.75,0.3) {$P_3$};
\end{tikzpicture}
\caption{Partial resolution of fibre $\overline{\pi}^{-1}(0)$ in case $d = 1$ of Proposition \ref{prop:infinityfibres1}.}
\label{fig:d=1}
  \end{center}
\end{figure}

In this figure, the points $P_i$ are singularities of type $(m,n)$, which we may resolve by Lemma \ref{lem:singularity}. The point $P_1$ has $(m,n) = (2,1)$, $P_2$ has $(m,n) = (3,2)$, and $P_3$ has $(m,n) = (4,3)$. The four components that meet at them are the four hyperplanes from this lemma, so the resulting configuration of divisors meets normally.  We obtain one new component from each $P_1$, three new components from each $P_2$, and five new components from each $P_3$. The resulting fibre of type $\mathrm{II}^*$ is shown in Figure \ref{fig:II*}. 

\begin{figure}
  \begin{center}
\begin{tikzpicture}[scale=0.33]
\draw (31.5,0.85)--(19.9,20.8)--(18.6,20.05)--(17.3,20.8)--(16,20.05)--(14.7,20.8)--(13.4,20.05)--(12.1,20.8)--(0.5,0.85);
\draw (27.5,3.15)--(18.6,18.5)--(17.3,17.75)--(16,18.5)--(14.7,17.75)--(13.4,18.5)--(4.5,3.15);
\draw (23.5,5.45)--(17.3,16.2)--(16,15.45)--(14.7,16.2)--(8.5,5.45);
\draw (12,6.9)--(20,6.9)--(16,13.9)--(12,6.9);

\draw (16,13.9)--(16,15.45);
\draw (17.3,16.2)--(17.3,17.75);
\draw (14.7,16.2)--(14.7,17.75);
\draw (18.6,18.5)--(18.6,20.05);
\draw (16,18.5)--(16,20.05);
\draw (13.4,18.5)--(13.4,20.05);
\draw [lightgray] (19.9,20.8)--(19.9,22);
\draw [lightgray] (17.3,20.8)--(17.3,22);
\draw [lightgray] (14.7,20.8)--(14.7,22);
\draw [lightgray] (12.1,20.8)--(12.1,22);

\draw (-0.5,-0.3)--(0.5,0.85)--(1.5,0.85)--(3.5,2)--(4.5,3.15)--(5.5,3.15)--(7.5,4.3)--(8.5,5.45)--(9.5,5.45)--(12,6.9);
\draw (9.5,5.45)--(10.5,3.7);
\draw (7.5,4.3)--(8.5,2.55);
\draw (5.5,3.15)--(6.5,1.4);
\draw (3.5,2)--(4.5,0.25);
\draw (1.5,0.85)--(2.5,-0.9);
\draw (-0.5,-0.3)--(0.5,-2.05);

\draw (0.5,-2.05)--(1.85,-2.05)--(2.5,-0.9)--(3.85,-0.9)--(4.5,0.25)--(5.85,0.25)--(6.5,1.4)--(7.85,1.4)--(8.5,2.55)--(9.85,2.55)--(10.5,3.7);
\draw (9.85,2.55)--(10.5,1.4);
\draw (7.85,1.4)--(8.5,0.25);
\draw (5.85,0.25)--(6.5,-0.9);
\draw (3.85,-0.9)--(4.5,-2.05);
\draw (1.85,-2.05)--(2.5,-3.2);

\draw (2.5,-3.2)--(3.85,-3.2)--(4.5,-2.05)--(5.85,-2.05)--(6.5,-0.9)--(7.85,-0.9)--(8.5,0.25)--(9.85,0.25)--(10.5,1.4);
\draw (9.85,0.25)--(10.5,-0.9);
\draw (7.85,-0.9)--(8.5,-2.05);
\draw (5.85,-2.05)--(6.5,-3.2);
\draw (3.85,-3.2)--(4.5,-4.35);

\draw (4.5,-4.35)--(5.85,-4.35)--(6.5,-3.2)--(7.85,-3.2)--(8.5,-2.05)--(9.85,-2.05)--(10.5,-0.9);
\draw (9.85,-2.05)--(10.5,-3.2);
\draw (7.85,-3.2)--(8.5,-4.35);
\draw (5.85,-4.35)--(6.5,-5.5);

\draw (6.5,-5.5)--(7.85,-5.5)--(8.5,-4.35)--(9.85,-4.35)--(10.5,-3.2);
\draw (9.85,-4.35)--(10.5,-5.5);
\draw (7.85,-5.5)--(8.5,-6.65);

\draw (8.5,-6.65)--(9.85,-6.65)--(10.5,-5.5);
\draw (9.85,-6.65)--(10.5,-7.8);

\draw [lightgray] (-0.5,-0.3)--(-2.5,-1.45);
\draw [lightgray] (0.5,-2.05)--(-1.5,-3.2);
\draw [lightgray] (2.5,-3.2)--(0.5,-4.35);
\draw [lightgray] (4.5,-4.35)--(2.5,-5.5);
\draw [lightgray] (6.5,-5.5)--(4.5,-6.65);
\draw [lightgray] (8.5,-6.65)--(6.5,-7.8);
\draw [lightgray] (10.5,-7.8)--(8.5,-8.95);

\draw (32.5,-0.3)--(31.5,0.85)--(30.5,0.85)--(28.5,2)--(27.5,3.15)--(26.5,3.15)--(24.5,4.3)--(23.5,5.45)--(22.5,5.45)--(20,6.9);
\draw (22.5,5.45)--(21.5,3.7);
\draw (24.5,4.3)--(23.5,2.55);
\draw (26.5,3.15)--(25.5,1.4);
\draw (28.5,2)--(27.5,0.25);
\draw (30.5,0.85)--(29.5,-0.9);
\draw (32.5,-0.3)--(31.5,-2.05);

\draw (31.5,-2.05)--(30.15,-2.05)--(29.5,-0.9)--(28.15,-0.9)--(27.5,0.25)--(26.15,0.25)--(25.5,1.4)--(24.15,1.4)--(23.5,2.55)--(22.15,2.55)--(21.5,3.7);
\draw (22.15,2.55)--(21.5,1.4);
\draw (24.15,1.4)--(23.5,0.25);
\draw (26.15,0.25)--(25.5,-0.9);
\draw (28.15,-0.9)--(27.5,-2.05);
\draw (30.15,-2.05)--(29.5,-3.2);

\draw (29.5,-3.2)--(28.15,-3.2)--(27.5,-2.05)--(26.15,-2.05)--(25.5,-0.9)--(24.15,-0.9)--(23.5,0.25)--(22.15,0.25)--(21.5,1.4);
\draw (22.15,0.25)--(21.5,-0.9);
\draw (24.15,-0.9)--(23.5,-2.05);
\draw (26.15,-2.05)--(25.5,-3.2);
\draw (28.15,-3.2)--(27.5,-4.35);

\draw (27.5,-4.35)--(26.15,-4.35)--(25.5,-3.2)--(24.15,-3.2)--(23.5,-2.05)--(22.15,-2.05)--(21.5,-0.9);
\draw (22.15,-2.05)--(21.5,-3.2);
\draw (24.15,-3.2)--(23.5,-4.35);
\draw (26.15,-4.35)--(25.5,-5.5);

\draw (25.5,-5.5)--(24.15,-5.5)--(23.5,-4.35)--(22.15,-4.35)--(21.5,-3.2);
\draw (22.15,-4.35)--(21.5,-5.5);
\draw (24.15,-5.5)--(23.5,-6.65);

\draw (23.5,-6.65)--(22.15,-6.65)--(21.5,-5.5);
\draw (22.15,-6.65)--(21.5,-7.8);

\draw [lightgray] (32.5,-0.3)--(34.5,-1.45);
\draw [lightgray] (31.5,-2.05)--(33.5,-3.2);
\draw [lightgray] (29.5,-3.2)--(31.5,-4.35);
\draw [lightgray] (27.5,-4.35)--(29.5,-5.5);
\draw [lightgray] (25.5,-5.5)--(27.5,-6.65);
\draw [lightgray] (23.5,-6.65)--(25.5,-7.8);
\draw [lightgray] (21.5,-7.8)--(23.5,-8.95);

\draw (10.5,3.7)--(21.5,3.7);
\draw (10.5,1.4)--(21.5,1.4);
\draw (10.5,-0.9)--(21.5,-0.9);
\draw (10.5,-3.2)--(21.5,-3.2);
\draw (10.5,-5.5)--(21.5,-5.5);
\draw (10.5,-7.8)--(21.5,-7.8);


\filldraw[white] (16.5,-9)--(16.5,-6.15)--(16,-6.65)--(15.5,-7.8)--(15.5,-9)--(16.5,-9);
\filldraw[lightgray] (16,-6.65)--(15.5,-5.5)--(15.5,-3.2)--(16,-2.05)--(15.5,-0.9)--(15.5,1.4)--(16,2.55)--(15.5,3.7)--(16.5,4.7)--(16.5,-6.15)--(16,-6.65);
\draw [lightgray] (15.5,-9)--(15.5,-7.8);
\draw [lightgray] (16.5,-9)--(16.5,-6.15);
\draw (15.5,-7.8)--(16,-6.65)--(15.5,-5.5)--(15.5,-3.2)--(16,-2.05)--(15.5,-0.9)--(15.5,1.4)--(16,2.55)--(15.5,3.7);
\draw (16.5,-6.15)--(16.5,4.7);
\draw (16,-6.65)--(16.5,-6.15);
\draw (16,-2.05)--(16.5,-1.55);
\draw (16,2.55)--(16.5,3.05);

\filldraw[white] (1.25,0.1)--(-2.25,-1.9)--(-1.75,-2.75)--(0.25,-1.6)--(1.25,-0.45)--(1.25,0.1);
\filldraw[lightgray] (1.25,-0.45)--(2.25,-0.45)--(4.25,0.7)--(5.25,1.85)--(6.25,1.85)--(8.25,3)--(9.25,4.15)--(10.25,4.15)--(10.25,5.275)--(1.25,0.1)--(1.25,-0.45);
\draw [lightgray] (1.25,0.1)--(-2.25,-1.9);
\draw [lightgray] (0.25,-1.6)--(-1.75,-2.75);
\draw (0.25,-1.6)--(1.25,-0.45)--(2.25,-0.45)--(4.25,0.7)--(5.25,1.85)--(6.25,1.85)--(8.25,3)--(9.25,4.15)--(10.25,4.15);
\draw (1.25,0.1)--(10.25,5.275);
\draw (1.25,0.1)--(1.25,-0.45);
\draw (5.25,2.4)--(5.25,1.85);
\draw (9.25,4.7)--(9.25,4.15);

\filldraw[white] (30.75,0.1)--(34.25,-1.9)--(33.75,-2.75)--(31.75,-1.6)--(30.75,-0.45)--(30.75,0.1);
\filldraw[lightgray] (30.75,-0.45)--(29.75,-0.45)--(27.75,0.7)--(26.75,1.85)--(25.75,1.85)--(23.75,3)--(22.75,4.15)--(21.75,4.15)--(21.75,5.275)--(30.75,0.1)--(30.75,-0.45);
\draw [lightgray] (30.75,0.1)--(34.25,-1.9);
\draw [lightgray] (31.75,-1.6)--(33.75,-2.75);
\draw (31.75,-1.6)--(30.75,-0.45)--(29.75,-0.45)--(27.75,0.7)--(26.75,1.85)--(25.75,1.85)--(23.75,3)--(22.75,4.15)--(21.75,4.15);
\draw (30.75,0.1)--(21.75,5.275);
\draw (30.75,0.1)--(30.75,-0.45);
\draw (26.75,2.4)--(26.75,1.85);
\draw (22.75,4.7)--(22.75,4.15);

\filldraw[lightgray] (10.25,4.15) .. controls (15,5.6) .. (15.5,3.7)--(16.5,3.7) .. controls (17,5.6) .. (21.75,4.15)--(21.75,5.275) .. controls (16,7) .. (10.25,5.275)--(10.25,4.15);
\draw (10.25,4.15) .. controls (15,5.6) .. (15.5,3.7);
\draw (10.25,5.275) .. controls (16,7) .. (21.75,5.275);
\draw (21.75,4.15) .. controls (17,5.6) .. (16.5,3.7);

\node at (16,10) {$4$};
\node at (19.5,10) {$3$};
\node at (22.25,10) {$2$};
\node at (25,10) {$1$};
\node at (12.5,10) {$3$};
\node at (9.75,10) {$2$};
\node at (7,10) {$1$};
\node at (16,16.8) {$2$};
\node at (14.7,19.2) {$1$};
\node at (17.3,19.2) {$1$};

\node at (14.5,4.4) {$6$};
\node at (14.5,2.55) {$5$};
\node at (14.5,0.25) {$4$};
\node at (14.5,-2.05) {$3$};
\node at (14.5,-4.35) {$2$};
\node at (14.5,-6.65) {$1$};
\node at (16,-4.35) {$1$};
\node at (16,0.25) {$2$};
\node at (16,5.5) {$3$};

\node at (9.175,1.4) {$4$};
\node at (9.175,-0.9) {$3$};
\node at (9.175,-3.2) {$2$};
\node at (9.175,-5.5) {$1$};
\node at (7.175,0.25) {$3$};
\node at (7.175,-2.05) {$2$};
\node at (7.175,-4.35) {$1$};
\node at (5.175,-0.9) {$2$};
\node at (5.175,-3.2) {$1$};
\node at (3.175,-2.05) {$1$};
\node at (9.5,3.3) {$5$};
\node at (7.5,1.95) {$4$};
\node at (5.5,1) {$3$};
\node at (3.5,-0.35) {$2$};
\node at (1.5,-1.3) {$1$};
\node at (2.95,0.5) {$1$};
\node at (6.9,2.8) {$2$};

\node at (22.825,1.4) {$4$};
\node at (22.825,-0.9) {$3$};
\node at (22.825,-3.2) {$2$};
\node at (22.825,-5.5) {$1$};
\node at (24.825,0.25) {$3$};
\node at (24.825,-2.05) {$2$};
\node at (24.825,-4.35) {$1$};
\node at (26.825,-0.9) {$2$};
\node at (26.825,-3.2) {$1$};
\node at (28.825,-2.05) {$1$};
\node at (22.5,3.3) {$5$};
\node at (24.5,1.95) {$4$};
\node at (26.5,1) {$3$};
\node at (28.5,-0.35) {$2$};
\node at (30.5,-1.3) {$1$};
\node at (29.05,0.5) {$1$};
\node at (25.1,2.8) {$2$};
\end{tikzpicture}
\caption{Fibre of type $\mathrm{II}^*$.}
\label{fig:II*}
  \end{center}
\end{figure}

In the case $d=2$, the threefold $\overline{\calX}$ is non-normal along its fibre over $\delta = 0$. After blowing up this fibre twice in the ambient space we obtain a normal threefold, with  fibre over $\delta = 0$ as shown in Figure \ref{fig:d=2}. The points labelled $P$ are singularities of type $(4,2)$, which we may resolve by Lemma \ref{lem:singularity}. The four components that meet at them are the four hyperplanes from this lemma, so the resulting configuration of divisors meets normally. The points labelled $Q$ are singularities of type $(2,0)$, which do not give rise to any new components on resolution. The resolution proceeds similarly to the $d=1$ case; the result is a normal-crossings fibre of type $\mathrm{IV}^*$, shown in Figure \ref{fig:IV*}.  

\begin{figure}
  \begin{center}
\begin{tikzpicture}[scale=0.6]
\draw [very thick] (0,0)--(8,0);
\draw (0,0)--(4,6.95)--(8,0);
\draw [very thick] (2,1.15)--(6,1.15);
\draw (6,1.15)--(4,4.65)--(2,1.15);
\draw [very thick] (0,0)--(2,1.15);
\draw [dashed] (4,0)--(4,1.15);
\draw [very thick] (4,6.95)--(4,4.65);
\draw [very thick] (6,1.15)--(8,0);

\filldraw[black]
(0,0) circle (3pt)
(4,0) circle (3pt)
(8,0) circle (3pt)
(4,6.95) circle (3pt)
;

\node[above] at (5,-0.15) {$A_1$};
\node[above] at (5,1) {$A_1$};
\node[right] at (3.9,5.2) {$A_1$};
\node[right] at (1,0.5) {$A_3$};
\node[right] at (5.9,0.5) {$A_3$};
\node[right] at (5.8,1.4) {$Q$};
\node[right] at (1.3,1.4) {$Q$};
\node[right] at (7.9,0.3) {$P$};
\node[right] at (-0.7,0.3) {$P$};
\end{tikzpicture}
\caption{Partial resolution of fibre $\overline{\pi}^{-1}(0)$ in case $d = 2$ of Proposition \ref{prop:infinityfibres1}.}
\label{fig:d=2}
\end{center}
\end{figure}

\begin{figure}
  \begin{center}
\begin{tikzpicture}[scale=0.33]
\draw (27.5,3.15)--(18.6,18.5)--(17.3,17.75)--(16.8,18.05)--(15.2,18.05)--(14.7,17.75)--(13.4,18.5)--(4.5,3.15);
\draw (23.5,5.45)--(17.3,16.2)--(14.7,16.2)--(8.5,5.45)--(23.5,5.45);

\draw (17.3,16.2)--(17.3,17.75);
\draw (14.7,16.2)--(14.7,17.75);
\draw [lightgray] (18.6,18.5)--(18.6,20);
\draw [lightgray] (16.8,18.05)--(16.8,20);
\draw [lightgray] (15.2,18.05)--(15.2,20);
\draw [lightgray] (13.4,18.5)--(13.4,20);

\draw (10.5,1.4)--(21.5,1.4);
\draw (12.5,-2.05)--(19.5,-2.05);
\draw (14.5,-5.5)--(17.5,-5.5);

\draw (3.5,2)--(4.5,3.15)--(5.5,3.15)--(7.5,4.3)--(8.5,5.45);
\draw (7.5,4.3)--(8.5,2.55);
\draw (5.5,3.15)--(6.5,1.4);
\draw (3.5,2)--(4.5,0.25);

\draw (4.5,0.25)--(5.85,0.25)--(6.5,1.4)--(7.85,1.4)--(8.5,2.55)--(10.5,1.4);
\draw (7.85,1.4)--(8.5,0.25);
\draw (5.85,0.25)--(6.5,-0.9);

\draw (6.5,-0.9)--(7.85,-0.9)--(8.5,0.25)--(9.85,0.25)--(10.5,1.4);
\draw (9.85,0.25)--(10.5,-0.9);
\draw (7.85,-0.9)--(8.5,-2.05);

\draw (8.5,-2.05)--(9.85,-2.05)--(10.5,-0.9)--(12.5,-2.05);
\draw (9.85,-2.05)--(10.5,-3.2);

\draw (10.5,-3.2)--(11.85,-3.2)--(12.5,-2.05);
\draw (11.85,-3.2)--(12.5,-4.35);

\draw (12.5,-4.35)--(14.5,-5.5);

\draw [lightgray] (1.5,0.85)--(3.5,2);
\draw [lightgray] (2.5,-0.9)--(4.5,0.25);
\draw [lightgray] (4.5,-2.05)--(6.5,-0.9);
\draw [lightgray] (6.5,-3.2)--(8.5,-2.05);
\draw [lightgray] (8.5,-4.35)--(10.5,-3.2);
\draw [lightgray] (10.5,-5.5)--(12.5,-4.35);
\draw [lightgray] (12.5,-6.65)--(14.5,-5.5);

\draw (28.5,2)--(27.5,3.15)--(26.5,3.15)--(24.5,4.3)--(23.5,5.45);
\draw (24.5,4.3)--(23.5,2.55);
\draw (26.5,3.15)--(25.5,1.4);
\draw (28.5,2)--(27.5,0.25);

\draw (27.5,0.25)--(26.15,0.25)--(25.5,1.4)--(24.15,1.4)--(23.5,2.55)--(21.5,1.4);
\draw (24.15,1.4)--(23.5,0.25);
\draw (26.15,0.25)--(25.5,-0.9);

\draw (25.5,-0.9)--(24.15,-0.9)--(23.5,0.25)--(22.15,0.25)--(21.5,1.4);
\draw (22.15,0.25)--(21.5,-0.9);
\draw (24.15,-0.9)--(23.5,-2.05);

\draw (23.5,-2.05)--(22.15,-2.05)--(21.5,-0.9)--(19.5,-2.05);
\draw (22.15,-2.05)--(21.5,-3.2);

\draw (21.5,-3.2)--(20.15,-3.2)--(19.5,-2.05);
\draw (20.15,-3.2)--(19.5,-4.35);

\draw (19.5,-4.35)--(17.5,-5.5);

\draw [lightgray] (30.5,0.85)--(28.5,2);
\draw [lightgray] (29.5,-0.9)--(27.5,0.25);
\draw [lightgray] (27.5,-2.05)--(25.5,-0.9);
\draw [lightgray] (25.5,-3.2)--(23.5,-2.05);
\draw [lightgray] (23.5,-4.35)--(21.5,-3.2);
\draw [lightgray] (21.5,-5.5)--(19.5,-4.35);
\draw [lightgray] (19.5,-6.65)--(17.5,-5.5);

\filldraw[white] (1.75,0.4)--(5.25,2.4)--(5.25,1.85)--(4.25,0.7)--(2.25,-0.45)--(1.75,0.4);
\filldraw[lightgray] (5.25,1.85)--(6.25,1.85)--(8.25,3)--(9.25,4.7)--(5.25,2.4)--(5.25,1.85);
\draw [lightgray] (5.25,2.4)--(1.75,0.4);
\draw [lightgray] (4.25,0.7)--(2.25,-0.45);
\draw (4.25,0.7)--(5.25,1.85)--(6.25,1.85)--(8.25,3);
\draw (5.25,2.4)--(9.25,4.7)--(8.25,3);
\draw (5.25,2.4)--(5.25,1.85);

\filldraw[white] (30.15,0.4)--(26.75,2.4)--(26.75,1.85)--(27.75,0.7)--(29.75,-0.45)--(30.25,0.4);
\filldraw[lightgray] (26.75,1.85)--(25.75,1.85)--(23.75,3)--(22.75,4.7)--(26.75,2.4)--(26.75,1.85);
\draw [lightgray] (26.75,2.4)--(30.25,0.4);
\draw [lightgray] (27.75,0.7)--(29.75,-0.45);
\draw (27.75,0.7)--(26.75,1.85)--(25.75,1.85)--(23.75,3);
\draw (23.75,3)--(22.75,4.7);
\draw (26.75,2.4)--(22.75,4.7);
\draw (26.75,2.4)--(26.75,1.85);


\filldraw[white] (16.5,-7)--(16.5,-3.275)--(16,-3.775)--(15.5,-5.5)--(15.5,-7)--(16.5,-7);
\filldraw[lightgray] (16,-3.775)--(15.5,-2.05)--(15.5,1.4)--(16.5,3.125)--(16.5,-3.275)--(16,-3.775);
\draw [lightgray] (15.5,-7)--(15.5,-5.5);
\draw [lightgray] (16.5,-7)--(16.5,-3.275);
\draw (15.5,-5.5)--(16,-3.775)--(15.5,-2.05)--(15.5,1.4);
\draw (16.5,-3.275)--(16.5,3.125);
\draw (16,-3.775)--(16.5,-3.275);
\draw (15.5,1.4)--(16.5,3.125);

\node at (16,10) {$2$};
\node at (22.25,10) {$1$};
\node at (9.75,10) {$1$};
\node at (16,17) {$1$};

\node at (16,4) {$3$};
\node at (14.5,-0.325) {$2$};
\node at (14.5,-3.775) {$1$};
\node at (16,-0.325) {$1$};

\node at (9.175,1.1) {$2$};
\node at (9.175,-0.9) {$1$};
\node at (7.175,0.25) {$1$};
\node at (7.5,1.95) {$2$};
\node at (5.5,1) {$1$};
\node at (11.175,-2.35) {$1$};
\node at (6.8,2.72) {$1$};

\node at (22.825,1.1) {$2$};
\node at (22.825,-0.9) {$1$};
\node at (24.825,0.25) {$1$};
\node at (24.5,1.95) {$2$};
\node at (26.5,1) {$1$};
\node at (20.825,-2.35) {$1$};
\node at (25.2,2.72) {$1$};
\end{tikzpicture}
\caption{Fibre of type $\mathrm{IV}^*$.}
\label{fig:IV*}
  \end{center}
\end{figure}

Finally, in the case $d=3$,  the threefold $\overline{\calX}$ is again non-normal along its fibre over $\delta$. This time we perform a change of coordinates $w \mapsto \delta w$; this is an isomorphism on a general fibre and performs a birational modification on the fibre over $\delta = 0$. The resulting family is given by 
\[\frac{\delta}{27}w^4+  \delta^{k+2}w^2yz+ \delta wy^2z + \delta wyz^2 +x^2yz   = 0\]
The central fibre in this family is identical to that in the family given by  Equation \eqref{eq:I0*} (see Figure \ref{fig:I0*unresolved}) and resolves in the same way; the resulting fibre of type $\mathrm{I}_0^*$ is shown in Figure \ref{fig:I0*}.
\end{proof}

\subsubsection{Types \texorpdfstring{$\mathrm{II}$, $\mathrm{IV}$, $\mathrm{I}_0$}{II, IV, I0}} Data: $\gamma = \infty$, $d \in \{1,2,3\}$, $\det(\rho) = (-1)^{d+1}$.  

These data may be realized by Equation \eqref{eq:2dimX1} when $\beta'$ has a simple pole and $\gamma$ has a pole of order $d$. However, as in the previous section, we consider the more general family $\overline{\calX} \to \Delta$ defined by the equation
\begin{equation} \label{eq:infinity2} \{\tfrac{1}{27}w^4 +  \delta^{d+k}w^2yz+ \delta^dwy^2z + \delta^dwyz^2 + \delta^{d-1}x^2yz   = 0\} \subset \bP^3[w,x,y,z] \times \Delta,\end{equation}
where $k \geq 0$ and $d \in \{1,2,3\}$. Note that if $k = 0$ this family is obtained from Equation \eqref{eq:2dimX1} by setting $\beta'=\frac{1}{\delta}$ and $\gamma = \frac{1}{\delta^d}$, then clearing denominators, but if $k > 0$ it is not in the form of Equation \eqref{eq:2dimX1}.

\begin{proposition} \label{prop:infinityfibres2} Consider the threefold $\overline{\calX} \to \Delta$ defined by Equation \eqref{eq:infinity2}. For any $k \geq 0$, after resolving the singularities of $\overline{\calX}$ we may perform a birational modification, affecting only the fibre over $\delta = 0$, so that the resulting threefold is smooth and has trivial canonical sheaf. In the cases where $d = 1$ or $2$ the resulting fibre contains $6$ or $3$ components, respectively, but does not have normal crossings. In the case $d = 3$ the resulting fibre is a smooth $M_1$-polarized K3 surface given as the minimal resolution of 
\[\{\tfrac{1}{27}w^4 + wy^2z+wyz^2+x^2yz = 0\} \subset \bP^4[w,x,y,z].\] 
The generalized homological invariant evaluated on a simple closed loop around $\delta = 0$ is conjugate to
\[\begin{pmatrix}(-1)^{d+1} & 0 & 0 \\ 0 & -\omega^d & 0 \\ 0 & 0 & -\omega^{5d} \end{pmatrix},\]
where $\omega$ is a primitive sixth root of unity. For $d = 1$, $2$, and $3$ the resulting fibre type is called $\mathrm{IV}$, $\mathrm{II}$, and $\mathrm{I}_0$ respectively.
\end{proposition}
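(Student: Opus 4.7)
My plan is to prove this proposition by a case-by-case analysis of $d \in \{1, 2, 3\}$, paralleling the structure of Proposition \ref{prop:infinityfibres1} but with the modified exponent on the $x^2yz$ term in Equation \eqref{eq:infinity2} producing qualitatively different outcomes. For each $d$, the steps are: (i) describe the central fibre of $\overline{\calX}$ and the singular locus of the threefold, (ii) perform appropriate ambient blow-ups or a coordinate substitution to tame any non-normal behaviour, (iii) resolve the remaining threefold singularities using Lemma \ref{lem:singularity} and the local techniques from the proof of Proposition \ref{prop:I0*}, and (iv) read off the generalized homological invariant at the end.

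The generalized homological invariant is the easiest piece. The substitution $y \mapsto \delta^k y$, $z \mapsto \delta^k z$ combined with a rescaling of $w$ and of the entire equation by appropriate powers of $\delta$ brings \eqref{eq:infinity2} into the Weierstrass form \eqref{eq:2dimX1} with $\gamma$ of order $-(d + 3k)$ and $\beta'$ of order $-(k+1)$ at $\delta = 0$. Proposition \ref{prop:generalmonodromy} then gives the monodromy as $(-\mathrm{Id})^{k+1} \Gamma_{\infty}^{d+3k}$; using $\Gamma_{\infty}^3 = -\mathrm{Id}$ this simplifies to $(-\mathrm{Id}) \cdot \Gamma_{\infty}^{d} = \mathrm{diag}((-1)^{d+1}, -\omega^d, -\omega^{5d})$ for every $k \geq 0$, matching the formula in the statement.

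The case $d = 3$ is the most straightforward, since the expected monodromy is trivial and we expect a smooth K3 fibre. I would perform the ambient substitution $(w, x) \mapsto (\delta w, \delta x)$ on \eqref{eq:infinity2} and divide by $\delta^4$, producing the family
\[\tfrac{1}{27}w^4 + \delta^{1+k} w^2yz + wy^2z + wyz^2 + x^2yz = 0,\]
whose central fibre is precisely the $M_1$-polarized K3 surface of Lemma \ref{lem:gammainfinity}. This birational modification affects only the fibre over $\delta = 0$, so the only remaining threefold singularities are along the four sections $C_1^{A_7}$, $C_2^{A_7}$, $C^{A_3}$, $C^{A_1}$, which may be crepantly resolved exactly as in Section \ref{sec:genfibres}.

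For $d = 1$ and $d = 2$ the nontrivial monodromy precludes a smooth K3 fibre and the analogous scaling does not produce integer exponents, so more work is needed. For $d = 2$, the central fibre of $\overline{\calX}$ is supported on the non-reduced plane $\{w = 0\}$ with multiplicity four (so $\overline{\calX}$ is non-normal), and preliminary ambient blow-ups of this fibre (analogous to those in the $d = 2$ case of Proposition \ref{prop:infinityfibres1}) are required before Lemma \ref{lem:singularity} can be applied at the remaining singularities. The reduced exponent on $x^2yz$ compared with Equation \eqref{eq:infinity1} leads to a qualitatively simpler configuration with only $3$ components (type $\mathrm{II}$), meeting in a non-normal-crossings fashion. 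For $d = 1$, the central fibre is the reduced but singular quartic $\{\tfrac{1}{27}w^4 + x^2yz = 0\}$, and an analogous sequence of blow-ups and singularity resolutions produces the $6$-component fibre of type $\mathrm{IV}$, again failing to be normal crossings. The main obstacle will be the bookkeeping in the $d = 1$ case: the many singular points appearing on the partially resolved threefold must be classified (mostly as singularities of type $(m, n)$) and resolved, with careful tracking of exceptional components to confirm the final count of $6$ components and the precise non-normal-crossings pattern dictated by the monodromy action on the transcendental lattice.
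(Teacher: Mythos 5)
Your monodromy computation is correct and, for general $k$, actually more explicit than what the paper records: the substitution $y \mapsto \delta^k y$, $z \mapsto \delta^k z$ followed by dividing the equation by $\delta^{d+3k}$ does put \eqref{eq:infinity2} into the Weierstrass form \eqref{eq:2dimX1} with $\beta' = \delta^{-(k+1)}$ and $\gamma = \delta^{-(d+3k)}$ over $\Delta^*$, and Proposition \ref{prop:generalmonodromy} then gives $(-\mathrm{Id})^{k+1}\Gamma_{\infty}^{d+3k} = -\Gamma_{\infty}^{d}$ as claimed. Your $d=3$ argument (the modification $(w,x)\mapsto(\delta w,\delta x)$, landing on the surface of Lemma \ref{lem:gammainfinity}, then crepantly resolving the four section curves) is exactly the paper's.

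The genuine gap is in $d=1$ and $d=2$, which is where the substance of the proposition lies: the component counts ($6$ and $3$), the precise failure of normal crossings, and smoothness with trivial canonical sheaf after the modification are asserted but never derived. For $d=2$ your plan starts from a false premise: the total space of \eqref{eq:infinity2} is \emph{not} non-normal along the central fibre (at a generic point of $\{w=\delta=0\}$ the local equation is $\tfrac{1}{27}w^4 + \delta u = 0$ with $u$ a unit, since $x^2yz$ carries $\delta^{d-1}=\delta$, so the threefold is smooth there; contrast \eqref{eq:infinity1}, where the $\delta^2$ gives two branches and genuine non-normality). Consequently the ``ambient blow-ups analogous to the $d=2$ case of Proposition \ref{prop:infinityfibres1}'' are not the right move, and you do not carry them out; in fact the very substitution you use for $d=3$ works verbatim for $d=2$, producing $\tfrac{\delta}{27}w^4 + \delta^{k+1}w^2yz + wy^2z + wyz^2 + x^2yz = 0$, whose central fibre is the reduced union $\{y=0\}\cup\{z=0\}\cup\{wy+wz+x^2=0\}$ and whose total space is singular only along the four section curves; this is how the paper obtains the type $\mathrm{II}$ fibre and its single non-normal-crossing point. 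For $d=1$ your plan to classify the bad points ``mostly as singularities of type $(m,n)$'' misses the key subtlety the paper flags: the two bad points are of type $(4,0)$, but the (single, irreducible) fibre component $\{\tfrac{1}{27}w^4 + x^2yz=0\}$ is not one of the coordinate hyperplanes in the local model of Lemma \ref{lem:singularity}, so that lemma cannot be invoked to control how the fibre meets the exceptional locus; instead one must blow up the curve of $A_1$ singularities directly (three new components) and then the two resulting $A_1$ curves (two more), which is what yields the six components of type $\mathrm{IV}$ and the line of triple points witnessing the failure of normal crossings. Without these explicit resolutions (and a check that each step is crepant), the fibre descriptions in the statement are not established.
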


\begin{proof} In the case $d=1$, the central fibre $\overline{\pi}^{-1}(0)$ consists of a single component which contains a curve of $A_1$ singularities, as shown in Figure \ref{fig:d=4}. The two points labelled $P$ are singularities of type $(4,0)$, but the component of the central fibre is not one of the hyperplanes from Lemma \ref{lem:singularity}. Thus, rather than using this lemma, we resolve directly by first blowing up the curve of $A_1$ singularities, which gives three new components, then blowing up again along the two curves of $A_1$ singularities that result. The resulting fibre of type $\mathrm{IV}$ has six components and is shown in Figure \ref{fig:IV}. Note that these components do not all meet normally: the bold line labelled $T$ is a line of triple points given locally as the intersection of the curves $\{s^2 + t^2 = 0\}$ (corresponding to the upper component) and $\{t = 0\}$ (corresponding to the lower component) in $\bA^2[s,t]$. 

\begin{figure}
\begin{center}
\begin{tikzpicture}[scale=0.6]
\draw [very thick] (0,0)--(8,0);
\draw (0,0)--(4,6.95)--(8,0);

\filldraw[black]
(0,0) circle (3pt)
(4,0) circle (3pt)
(8,0) circle (3pt)
(4,6.95) circle (3pt)
;

\node[above] at (5.2,-0.15) {$A_1$};
\node[right] at (7.9,0.3) {$P$};
\node[right] at (-0.7,0.3) {$P$};
\end{tikzpicture}
\caption{Partial resolution of fibre $\overline{\pi}^{-1}(0)$ in case $d = 1$ of Proposition \ref{prop:infinityfibres2}.}
\label{fig:d=4}
\end{center}
\end{figure}

\begin{figure}
\begin{center}
\begin{tikzpicture}[scale=0.6]
\draw (0,0)--(4,6.95)--(8,0);
\draw (0,0)--(0.5,-0.86)--(0.5,-1.86)--(1,-2.72)--(7,-2.72)--(7.5,-1.86)--(7.5,-0.86)--(8,0);
\draw (1,-2.72)--(2.5,-0.12)--(1.72,1.23);
\draw (0.5,-0.86)--(1.72,1.23)--(1.72,2.98);
\draw (7,-2.72)--(5.5,-0.12)--(6.28,1.23);
\draw (7.5,-0.86)--(6.28,1.23)--(6.28,2.98);
\draw [very thick] (2.5,-0.12)--(5.5,-0.12);

\node [above] at (4,-0.2) {$T$};
\node at (4,-1.5) {$1$};
\node at (1.5,-0.6) {$1$};
\node at (6.5,-0.6) {$1$};
\node at (1,0.9) {$1$};
\node at (7,0.9) {$1$};
\node at (4,3) {$1$};

\filldraw[black]
(0.5,-1.86) circle (3pt)
(4,-2.72) circle (3pt)
(7.5,-1.86) circle (3pt)
(4,6.95) circle (3pt)
;
\end{tikzpicture}
\caption{Fibre of type $\mathrm{IV}$}
\label{fig:IV}
\end{center}
\end{figure}

In the case $d = 2$, we perform a change of coordinates $w \mapsto \delta w$ and $x \mapsto \delta x$; this is an isomorphism on a general fibre and performs a birational modification on the fibre over $\delta = 0$.The resulting family is given by
\[\frac{\delta}{27}w^4 +  \delta^{k+1}w^2yz+ wy^2z + wyz^2 + x^2yz   = 0.\]
This family is singular only along the curves $C_1^{A_7}$, $C_2^{A_7}$, $C^{A_3}$, and $C^{A_1}$, and its central fibre $\delta = 0$ consists of three components $\{y = 0\}$, $\{z = 0\}$, and $\{ wy + wz + x^2 = 0\}$. This fibre is shown in Figure \ref{fig:II}. Note that the three components meet normally \emph{except} at the point $T = \{x = y = z = 0\}$, but this point is not a singular point of the threefold. Resolving the four curves $C_1^{A_7}$, $C_2^{A_7}$, $C^{A_3}$, and $C^{A_1}$, we obtain a family whose central fibre is of type $\mathrm{II}$: it contains three components, given by the transforms of the three components above.

\begin{figure}
\begin{center}
\begin{tikzpicture}[scale=0.7]
\draw(0,0)--(8,0);
\draw (0,0)--(4,6.95)--(8,0);
\draw (4,6.95)--(4,2.31);
\draw (0,0)--(4,2.31);
\draw (4,2.31)--(8,0);

\filldraw[black]
(0,0) circle (3pt)
(4,0) circle (3pt)
(8,0) circle (3pt)
(4,6.95) circle (3pt)
;

\node[below] at (3,4) {$y=0$};
\node[below] at (5,4) {$z=0$};
\node at (4,0.7) {$wy + wz + x^2 = 0$};
\node at (4,1.5) {$1$};
\node at (2.6,2.5) {$1$};
\node at (5.4,2.5) {$1$};
\node[above right] at (4,2.2) {$T$};
\end{tikzpicture}
\end{center}
\caption{Fibre of type $\mathrm{II}$.}
\label{fig:II}
\end{figure}

Finally, in the case $d=3$, we again perform a change of coordinates $w \mapsto \delta w$ and $x \mapsto \delta x$. The resulting family is given by
\[\tfrac{1}{27}w^4 +  \delta^{k+1}w^2yz  + wy^2z + wyz^2  + x^2yz = 0.\]
This family is singular only along the curves $C_1^{A_7}$, $C_2^{A_7}$, $C^{A_3}$, and $C^{A_1}$, and its central fibre $\delta = 0$ consists of a single component with equation
\[\tfrac{1}{27}w^4 + wy^2z+wyz^2+x^2yz = 0.\]
After resolving, it is a smooth K3 surface.
\end{proof}

\subsubsection{Modularity} \label{sec:modularity} Data: $\gamma = \infty$, $d \geq 1$, $\det(\rho) = \pm 1$. 

Propositions \ref{prop:infinityfibres1} and \ref{prop:infinityfibres2} give a description of all singular fibres with $\gamma = \infty$ and $d = \{1,2,3\}$. It remains to show that these are the only possibilities and to characterize when they appear.

All possible combinations of data in the $\gamma = \infty$ case are realized by either taking $\gamma = \frac{1}{\delta^d}$ in Equation \eqref{eq:X1} and clearing denominators, to get
\begin{equation}
\label{eq:inf1} \tfrac{1}{27}w^4 +  \delta^{d}w^2yz+ \delta^dwy^2z + \delta^dwyz^2 + \delta^dx^2yz   = 0,
\end{equation}
or by taking $\beta' = \frac{1}{\delta}$ and $\gamma = \frac{1}{\delta^d}$ in Equation \eqref{eq:2dimX1} and clearing denominators, to get
\begin{equation}
\label{eq:inf2} \tfrac{1}{27}w^4 +  \delta^{d}w^2yz + \delta^dwy^2z + \delta^dwyz^2 + \delta^{d-1}x^2yz   = 0.
\end{equation}

\begin{proposition} \label{prop:infinityfibresfull} Consider the threefolds $\overline{\pi}\colon\overline{\calX} \to \Delta$ defined by Equations \eqref{eq:inf1} and \eqref{eq:inf2}. After resolving the singularities of $\overline{\calX}$, we may perform a birational modification, affecting only the fibre over $\delta = 0$, so that the resulting threefold is smooth and has trivial canonical sheaf. The resulting fibres are given in the following table, where all values of $d \geq 1$ are taken modulo $6$. Descriptions of these fibres may be found in Propositions \ref{prop:infinityfibres1} and \ref{prop:infinityfibres2}.
\smallskip

\begin{center}
\begin{tabular}{|c|c|c|c|c|c|c|}
\hline
$d \pmod 6$ & $0$ & $1$ & $2$ & $3$ & $4$ & $5$ \\
\hline
Equation \eqref{eq:inf1} & $\mathrm{I}_0$ & $\mathrm{II}^*$ & $\mathrm{IV}^*$ & $\mathrm{I}_0^*$ & $\mathrm{IV}$ & $\mathrm{II}$ \\
Equation \eqref{eq:inf2} & $\mathrm{I}_0^*$ & $\mathrm{IV}$ & $\mathrm{II}$ & $\mathrm{I}_0$ & $\mathrm{II}^*$ & $\mathrm{IV}^*$ \\
\hline
\end{tabular}
\end{center}
\end{proposition}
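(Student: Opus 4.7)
The plan is to apply Theorem \ref{thm:Lclassification} to reduce every case in the table to one already analyzed in detail by Propositions \ref{prop:infinityfibres1} and \ref{prop:infinityfibres2}; the extra parameter $k$ in those propositions was introduced precisely to accommodate this reduction.

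First I would compute the generalized functional and homological invariants of \eqref{eq:inf1} and \eqref{eq:inf2}. Reading off the coefficients $a_1,\ldots,a_6$ and substituting into the formulas $\gamma = a_1 a_4 a_5 a_6^3/(a_2 a_6 - a_3^2/4)^3$ and $\beta' = a_2 a_6 - a_3^2/4$ derived in the proof of Proposition \ref{prop:canonicalmodel}, a short calculation yields $\gamma(\delta) = 1/(27\delta^d)$ for both equations, while $\beta' = \delta^{2d}$ for \eqref{eq:inf1} and $\beta' = \delta^{2d-1}$ for \eqref{eq:inf2}. Proposition \ref{prop:generalmonodromy} then gives monodromy representations conjugate to $\Gamma_{\infty}^d$ and $-\Gamma_{\infty}^d$, respectively.

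The crucial observation is the identity $\Gamma_{\infty}^3 = -\mathrm{Id}$ from Lemma \ref{lem:X1monodromy}, which gives the six-periodicity $\Gamma_{\infty}^6 = \mathrm{Id}$. Applying the same two formulas to Equations \eqref{eq:infinity1} and \eqref{eq:infinity2} produces, for any $k \geq 0$, monodromies $\Gamma_{\infty}^{d'}$ and $-\Gamma_{\infty}^{d'}$ (independent of $k$, with $d' \in \{1,2,3\}$) together with functional invariants $\gamma(\delta) = 1/(27\delta^{d'+3k})$. For any $d \geq 1$, writing $d = d' + 3k$ with $d' \in \{1,2,3\}$ and $k \geq 0$, a direct case-by-case check across the six residue classes modulo $6$ matches each of \eqref{eq:inf1} and \eqref{eq:inf2} to a unique family from \eqref{eq:infinity1} or \eqref{eq:infinity2} with identical functional and homological invariants, reproducing exactly the table as stated.

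Once this matching is in place, Theorem \ref{thm:Lclassification} implies that the two threefolds are isomorphic as $M_1$-polarized families over $\Delta^*$. Since the birational modifications of Propositions \ref{prop:infinityfibres1} and \ref{prop:infinityfibres2} affect only the central fibre, they transfer via this isomorphism to produce the required smooth models with trivial canonical sheaf for each of \eqref{eq:inf1} and \eqref{eq:inf2}, with central fibre of the listed type. The main obstacle is purely bookkeeping: carefully tracking monodromy signs and powers of $\Gamma_\infty$ across the twelve matchings (two equations times six residue classes) to ensure everything lines up.
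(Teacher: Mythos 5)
Your proposal is correct, and the bookkeeping does reproduce the table (I checked: with $d = d'+3k$, $d'\in\{1,2,3\}$, Equation \eqref{eq:inf1} matches \eqref{eq:infinity1} for $k$ even and \eqref{eq:infinity2} for $k$ odd, and vice versa for \eqref{eq:inf2}, which is exactly the stated mod-$6$ pattern). However, it takes a genuinely different route from the paper. The paper argues constructively: it exhibits two explicit reduction steps, $w \mapsto \delta w$ for \eqref{eq:infinity1} and $w \mapsto \delta w$, $x \mapsto \delta x$ for \eqref{eq:infinity2}, each an isomorphism away from $\delta = 0$, which convert one family into the other with $d \mapsto d-3$ and $k \mapsto k+1$; alternating these reduces to $1 \leq d \leq 3$, which is why Propositions \ref{prop:infinityfibres1} and \ref{prop:infinityfibres2} were proved with the auxiliary parameter $k$. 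This makes the birational modification completely explicit and shows directly where the alternation (hence the period $6$) comes from, without invoking any Torelli-type uniqueness. Your route instead matches invariants and appeals to Theorem \ref{thm:Lclassification}: you get the period $6$ from $\Gamma_\infty^3 = -\mathrm{Id}$, which is arguably cleaner bookkeeping, but you pay for it by relying on the Global-Torelli-based classification, and you should be a little more careful at two points. First, Proposition \ref{prop:generalmonodromy} is stated for families literally in the form \eqref{eq:2dimX1}; to apply it to \eqref{eq:infinity1}--\eqref{eq:infinity2} with $k>0$ you should first rescale $(w,y,z,x)$ by meromorphic powers of $\delta$ over $\Delta^*$ to reach Weierstrass form (this changes your $\beta'$ by an even power of $\delta$, so the parity, and hence the monodromy, is unaffected; the same caveat explains why your values $\beta' = \delta^{2d}$, $\delta^{2d-1}$ differ harmlessly from the direct ones $1$, $\delta^{-1}$, and why the $1/27$ normalization of $\gamma$ is immaterial, since you apply the same formula to both sides of the matching). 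Second, "identical homological invariants" should be justified via Proposition \ref{prop:plusminus}: since the matched families have literally the same functional invariant, their homological invariants agree up to sign on the generator of $\pi_1(\Delta^*)$, and the sign is pinned down by the determinant (odd rank), which is what your matrix computation actually verifies. With those two remarks made explicit, your argument is a valid alternative proof; alternatively, you could skip the recomputation entirely and quote the monodromy statements already contained in Propositions \ref{prop:infinityfibres1} and \ref{prop:infinityfibres2}.
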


\begin{proof} Observe that Equations \eqref{eq:inf1} and \eqref{eq:inf2} are the $k = 0$ cases of Equations \eqref{eq:infinity1} and \eqref{eq:infinity2}.  We proceed by an induction argument, based on two reduction steps. 

\emph{Step 1:} Start with a threefold given by \eqref{eq:infinity1}:
\[\tfrac{1}{27}w^4 + \delta^{d+k}w^2yz + \delta^dwy^2z + \delta^dwyz^2 + \delta^dx^2yz   = 0.\]
Perform a birational modification $w \mapsto \delta w$. The resulting family is given by
\[\tfrac{1}{27}w^4 + \delta^{d+k-2}w^2yz + \delta^{d-3}wy^2z + \delta^{d-3}wyz^2 + \delta^{d-4}x^2yz  = 0.\]
This is an equation of the form \eqref{eq:infinity2}, with $d \mapsto d-3$ and $k \mapsto k+1$.

\emph{Step 2:} Start with a threefold given by \eqref{eq:infinity2}:
\[\tfrac{1}{27}w^4 + \delta^{d+k}w^2yz + \delta^dwy^2z + \delta^dwyz^2 + \delta^{d-1}x^2yz  = 0.\]
Perform a birational modification $w \mapsto \delta w$ and $x \mapsto \delta x$. The resulting family is given by
\[\tfrac{1}{27}w^4 +  \delta^{d+k-2}w^2yz + \delta^{d-3}wy^2z + \delta^{d-3}wyz^2 + \delta^{d-3}x^2yz   = 0.\]
This is an equation of the form \eqref{eq:infinity1}, with $d \mapsto d-3$ and $k \mapsto k+1$.

\emph{Induction:} Given a family in one of the forms \eqref{eq:infinity1} or \eqref{eq:infinity2}, alternately perform reduction steps 1 and 2, above. Each step is an isomorphism on a general fibre and performs a birational modification on the fibre over $\delta = 0$, with the result of reducing $d$ by $3$ and increasing $k$ by $1$. Simply repeat until $1 \leq d \leq 3$, then conclude by applying Propositions \ref{prop:infinityfibres1} and \ref{prop:infinityfibres2}.

It remains to check the generalized homological invariants of the resulting fibres have the correct forms: this follows from Proposition \ref{prop:generalmonodromy}.
\end{proof}

\section{Minimal form and invariants} \label{sec:invariants}

\subsection{Minimal form} We now return to threefolds fibred non-isotrivially by $M_1$-polarized K3 surfaces over an arbitrary $1$-dimensional complex manifold $B$.

\begin{definition} A threefold fibred non-isotrivially by $M_1$-polarized K3 surfaces $\pi\colon \calX \to B$ is said to be in \emph{minimal form} if the singularities of $\calX$ are at worst terminal and the canonical sheaf of $\calX$ is trivial in a neighbourhood of any fibre.
\end{definition}

The results of the previous sections can be summarized by the following theorem.

\begin{theorem}\label{thm:minimalform} Every threefold $\pi\colon \calX \to B$ fibred non-isotrivially by $M_1$-polarized K3 surfaces is birational over $B$ to a threefold $\pi'\colon \calX' \to B$ in minimal form. Moreover, such a minimal form $\calX'$ is unique up to birational maps over $B$ that are isomorphisms in codimension $1$, and the singular fibres of $\calX'$ are classified by Table \ref{tab:fibres}.
\end{theorem}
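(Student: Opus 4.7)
The plan is to combine the Weierstrass form of Theorem \ref{thm:weierstrassform} with the local analysis of Theorem \ref{thm:singularfibres} for existence, then to deduce uniqueness from Theorem \ref{thm:Lclassification} together with a standard non-contraction argument for terminal threefolds with trivial canonical class.

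For \emph{existence}, I would first apply Theorem \ref{thm:weierstrassform} to pass from $\calX$ to a birational Weierstrass form $\overline{\pi}\colon\overline{\calX}\to B$. Let $\Sigma\subset B$ be the finite set of special points consisting of the zeros and poles of $\beta'$, the points with $\gamma(p)\in\{0,-1,\infty\}$, and the points over which $\overline{\calX}$ has additional singularities. On $B\setminus\Sigma$, Section \ref{sec:genfibres} shows that $\overline{\calX}$ is singular only along the four horizontal Du Val sections $C^{A_7}_1$, $C^{A_7}_2$, $C^{A_3}$, $C^{A_1}$, which admit a crepant resolution by iterated blowups. Around each $p\in\Sigma$, choose a small disc $\Delta_p\subset B$ and apply Theorem \ref{thm:singularfibres} to produce a local model $\calX'_p\to\Delta_p$ whose singularities are terminal and whose canonical sheaf is trivial. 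Since every birational modification in Section \ref{sec:singularfibres} is either a crepant blowup along the horizontal Du Val curves (which agrees with the global resolution outside $p$) or a modification supported set-theoretically in the central fibre, each $\calX'_p$ coincides with the generic-fibre resolution over $\Delta_p\setminus\{p\}$. The local pieces therefore glue to a global threefold $\pi'\colon\calX'\to B$ with terminal singularities and $\omega_{\calX'}$ trivial in a neighbourhood of every fibre, i.e., in minimal form.

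For \emph{uniqueness}, let $\pi'\colon\calX'\to B$ and $\pi''\colon\calX''\to B$ be two minimal forms birational to $\calX$ over $B$. Restricting to the cofinite open subset $U\subset B$ over which both are smooth $M_1$-polarized families of K3 surfaces, the two restricted families share the same generalized functional and homological invariants inherited from $\calX$, so Theorem \ref{thm:Lclassification} yields an isomorphism $\calX'|_U\cong\calX''|_U$ compatible with the projections to $U$. The resulting birational map $\phi\colon\calX'\dashrightarrow\calX''$ over $B$ is therefore an isomorphism away from the fibres over $B\setminus U$. Since $\calX'$ and $\calX''$ both have terminal singularities and canonical sheaves trivial near each such fibre, \cite[Proposition 3.54]{bgav} (the same reference invoked in the remark following Theorem \ref{thm:singularfibres}) shows that $\phi$ contracts no divisor, hence is an isomorphism in codimension $1$. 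The classification of singular fibres is then immediate from Theorem \ref{thm:singularfibres}, applied to the local model $\calX'_p$ at each $p\in\Sigma$: its central fibre is one of the types in Table \ref{tab:fibres}, and the uniqueness statement ensures this list is an intrinsic invariant of $\calX'$.

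The step most likely to require care is the gluing in the existence argument: Theorem \ref{thm:singularfibres} produces local models, but one must verify that the horizontal Du Val resolution performed on the open part of $B$ is compatible with the local construction on each $\Delta_p$. The key observation making this essentially automatic is the dichotomy above -- every local operation is either a blowup of one of the global sections $C^{A_7}_i$, $C^{A_3}$, $C^{A_1}$ (which commutes with the global resolution) or is supported entirely in the central fibre (which has empty intersection with the punctured disc). Consequently, the transition between local and global resolutions happens in codimension $\geq 2$, and the global minimal form is well defined.
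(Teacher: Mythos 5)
Your proposal is correct and follows essentially the same route as the paper: pass to the Weierstrass form via Theorem \ref{thm:weierstrassform}, resolve and then apply the local modifications of Theorem \ref{thm:singularfibres} (which affect only the singular fibres) to obtain the minimal form, and deduce uniqueness in codimension $1$ from the terminality and triviality of the canonical sheaf via \cite[Proposition 3.54]{bgav}. Your extra discussion of gluing the local models and your explicit appeal to Theorem \ref{thm:Lclassification} in the uniqueness step simply spell out details the paper's brief proof leaves implicit.
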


\begin{proof} By Theorem \ref{thm:weierstrassform}, we may find a Weierstrass model $\overline{\pi}\colon \overline{\calX} \to B$ for $\pi\colon \calX \to B$. After performing a minimal resolution of $\overline{\pi}\colon \overline{\calX} \to B$, Theorem \ref{thm:singularfibres} says that we can perform further birational modifications, which affect only the singular fibres, to obtain a minimal form $\pi'\colon \calX' \to B$ as required. By \cite[Proposition 3.54]{bgav}, the conditions on singularities and the canonical sheaf imply that any two such minimal forms must be isomorphic in codimension $1$.
\end{proof}

The remainder of this paper will be devoted to studying the properties of such minimal forms. We begin with a sufficient condition for a minimal form to be smooth.

\begin{proposition} \label{prop:smoothness} Let $\pi\colon \calX \to B$ be a threefold fibred non-isotrivially by $M_1$-polarized K3 surfaces in minimal form. Suppose that all fibres in $\calX$ with generalized functional invariant $\gamma = -1$ have type $\mathrm{III}$ with $d = 1$. Then $\calX$ is birational to a smooth threefold $\pi'\colon \calX' \to B$ fibred non-isotrivially by $M_1$-polarized K3 surfaces in minimal form.
\end{proposition}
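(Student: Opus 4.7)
The plan is to start from a Weierstrass form $\overline{\pi}\colon \overline{\calX}\to B$ (produced by Theorem \ref{thm:weierstrassform}), construct a minimal form $\pi'\colon \calX'\to B$ via the local modifications of Theorem \ref{thm:minimalform}, and then verify fibre-by-fibre that under the hypothesis every such modification produces a smooth threefold. Because any terminal singularity of a minimal form is an isolated point supported over a singular fibre, and because the analysis of Section \ref{sec:singularfibres} is entirely local around each singular fibre, it suffices to consult Table \ref{tab:fibres} and confirm that every fibre type admitted by our hypothesis can be locally resolved to a smooth threefold.

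I will organize the verification by the value of $\gamma(0)$. For generic $\gamma\notin\{0,-1,\infty\}$, types $\mathrm{I}_0$ and $\mathrm{I}_0^*$ yield smooth ambient threefolds by Subsection \ref{subsec:gammageneral} and Proposition \ref{prop:I0*}. For $\gamma=0$, type $\mathrm{I}_d^*$ resolves smoothly, and for $\mathrm{I}_d$ Proposition \ref{prop:Id} leaves only isolated threefold nodes on double curves, each of which can be small-resolved algebraically by blowing up one of the fibre components passing through it. For $\gamma=\infty$, Proposition \ref{prop:infinityfibresfull} (together with Propositions \ref{prop:infinityfibres1} and \ref{prop:infinityfibres2}) already produces smooth ambient threefolds for every possible fibre type.

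The critical case is $\gamma=-1$, and this is where the hypothesis does the work. Inspecting Propositions \ref{prop:III} and \ref{prop:III*}, the $\gamma=-1$ fibres that introduce genuine residual threefold singularities are precisely $\mathrm{III}$ with $d\geq 3$, $\mathrm{III}^*$ for every odd $d$, and $\mathrm{I}_0$ and $\mathrm{I}_0^*$ with $d$ even. The hypothesis excludes all of these, leaving only type $\mathrm{III}$ with $d=1$; for this case Proposition \ref{prop:III} states that after crepantly resolving the four section curves the ambient threefold is smooth (the K3 fibre itself retains a surface $A_1$ singularity at one point, but this is not a singularity of $\calX'$).

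Assembling these local smooth models will yield the desired global smooth $\pi'\colon \calX'\to B$ in minimal form, birational to $\calX$ over $B$. The main obstacle is therefore not analytic but taxonomic: one must simply track which fibre types in Table \ref{tab:fibres} contribute genuine terminal threefold singularities and confirm that the hypothesis on $\gamma=-1$ fibres is sharp enough to eliminate all of them. No geometric input is required beyond the case analysis already carried out in Section \ref{sec:singularfibres}.
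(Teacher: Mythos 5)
Your proposal is correct and follows essentially the same route as the paper: reduce to the local fibre analysis of Section \ref{sec:singularfibres}, observe that after the birational modifications of Theorem \ref{thm:minimalform} the only residual isolated terminal singularities arise from the $\gamma=-1$ fibres described in Propositions \ref{prop:III} and \ref{prop:III*}, and note that the hypothesis (type $\mathrm{III}$ with $d=1$ only) excludes exactly these cases. The paper's proof compresses the taxonomy you spell out into a single sentence, but the content is the same.
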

\begin{proof} Let $\pi\colon \calX \to B$ be a threefold satisfying the conditions of the theorem. The local analysis of the singular fibres  in Section \ref{sec:singularfibres} shows that, after possibly performing a birational modification affecting only the singular fibres, we may assume that $\calX$ is smooth apart from some isolated terminal singularities occurring over points in $B$ with $\gamma = -1$ as described in Propositions \ref{prop:III} and \ref{prop:III*}. But these singular cases are excluded by the assumptions on $\calX$.
\end{proof}

\begin{remark} The proposition above gives a sufficient condition for smoothness. The question of whether this condition is necessary is likely to be quite subtle; see Remark \ref{rem:isolatedsingularity}.
\end{remark}

\subsection{A canonical bundle formula}\label{sec:canonicalsheaf}

Our next goal is a formula for the canonical divisor. The reader may wish to compare \cite[Proposition 2.4]{cytfmqk3s} and its generalization \cite[Theorem 4.1]{cytfhrlpk3s}, which give conditions for triviality of the canonical bundle of a threefold fibred by $M_n$-polarized K3 surfaces with $n > 1$.

\begin{proposition} \label{prop:cbf} Let $\pi\colon \calX \to B$ be a  threefold fibred non-isotrivially by $M_1$-polarized K3 surfaces in minimal form over a compact curve $B$, with extended generalized functional invariant $\overline{g} \colon B \to \overline{\calM}_{M_1} \cong \bP^1$. Then the canonical divisor of $\calX$ is given by
\[K_{\calX} = \pi^*\left(K_B + \tfrac{1}{6}L + \sum_{P \in B} S_P P\right),\]
where $L$ is a divisor in the linear system $\overline{g}^*\calO(1)$ and $S_P$ is the number from the final column of Table \ref{tab:fibres} corresponding to the type of the fibre $\pi^{-1}(P)$.
\end{proposition}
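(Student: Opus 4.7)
The plan is to identify $K_\calX$ as the pullback of an explicit $\bQ$-divisor on $B$, compute that divisor on the smooth locus using Hodge theory, and then determine the local contributions $S_P$ at special fibres via the resolutions constructed in Section \ref{sec:singularfibres}. The first step is a general reduction: since $\pi$ has generic K3 fibre we have $\pi_*\calO_\calX = \calO_B$, and since $\calX$ is in minimal form $\omega_\calX$ is trivial in an analytic neighbourhood of every fibre. Combining these one obtains $\omega_\calX \cong \pi^*\calO_B(D)$ for a $\bQ$-Cartier divisor $D$ on $B$, so the theorem reduces to identifying $D$ with $K_B + \tfrac{1}{6}L + \sum_P S_P P$.

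To compute $D$ on the open set $U \subset B$ where the fibration is smooth and $\overline{g}(U)$ avoids the three special points $\{0, -1, \infty\} \subset \overline{\calM}_{M_1}$, I would use that the relative canonical sheaf $\omega_{\calX^U/U}$ is fibrewise trivial, hence equal to $(\pi^U)^*\lambda$ where $\lambda := \pi^U_*\omega_{\calX^U/U}$ is the Hodge line bundle. By the universal property of the coarse moduli space, $\lambda \cong \overline{g}^*\lambda_{X(1)}$ where $\lambda_{X(1)}$ is the Hodge line bundle on the orbifold $\overline{\calM}_{M_1} \cong X(1)$. The hypergeometric period computation in Lemma \ref{lem:X1monodromy} identifies $\lambda_{X(1)}^{\otimes 6}$ with $\calO_{\bP^1}(1)$ on the smooth orbifold locus, so $\lambda|_U = \tfrac{1}{6}L|_U$ as $\bQ$-divisor classes for any $L \in \overline{g}^*\calO(1)$; combining with $\omega_\calX = \omega_{\calX/B} \otimes \pi^*\omega_B$ gives $D|_U = K_B|_U + \tfrac{1}{6}L|_U$.

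It then remains to determine the coefficient $S_P$ of $D - (K_B + \tfrac{1}{6}L)$ at each $P \in B$ where $\overline{g}(P) \in \{0, -1, \infty\}$ or $\pi^{-1}(P)$ is singular; this I would do case-by-case, following Table \ref{tab:fibres}. For each fibre type I would take the corresponding local Weierstrass model from Section \ref{sec:singularfibres}, carry out the resolution and birational modifications described in the proofs of Propositions \ref{prop:III}--\ref{prop:infinityfibres2}, and track the canonical class along the way. Since the resulting threefold is in minimal form the central fibre represents the trivial class relative to $\pi$, so $S_P$ can be read off from the coefficient of any chosen component of the central fibre in $K_\calX$, divided by that component's multiplicity in $\pi^{-1}(P)$.

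The main obstacle will be the systematic bookkeeping in this last step, particularly for the $\mathrm{II}^*$, $\mathrm{I}_d^*$, and $\gamma = \infty$ fibres with intricate resolutions. A useful consistency check is that $\pi^*D = K_\calX$ must be an integral divisor, which strongly constrains the fractional part of each $S_P$ in terms of the multiplicities of components of $\pi^{-1}(P)$ listed in Table \ref{tab:fibres}; this determines the fractional part of the answer a priori and leaves only the integer part to be found by explicit computation, which should vanish once the resolution is carried out crepantly.
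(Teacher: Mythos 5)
Your overall strategy (reduce to $K_{\calX}=\pi^*D$, identify the ``Hodge part'' of $D$, then compute local corrections from the explicit models of Section \ref{sec:singularfibres}) is a reasonable by-hand substitute for what the paper does, but as written it has two genuine gaps, and they sit exactly where the content of the proposition lies. The paper's proof simply invokes Fujino--Mori's canonical bundle formula \cite[Theorem 1.2 and Corollary 1.3]{cbf2}: the moduli part $\tfrac{1}{a}L$ with $a=kn=6$ comes packaged with the theorem, the extra divisor $D$ vanishes by the minimal-form hypothesis, and $S_P=1-t_P$ is a log canonical threshold, computed from the explicit fibres ($t_P=1/m_P$ in the normal crossings cases, a short log resolution otherwise). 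Your proposal has to reconstruct both ingredients, and neither reconstruction is sound as stated.

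First, the identification $\lambda\cong\overline{g}^*\lambda_{X(1)}$ ``by the universal property of the coarse moduli space'' is not available: a coarse moduli space carries no universal family, and more importantly the family over $U$ is \emph{not} determined by $\overline{g}$ alone --- by Proposition \ref{prop:plusminus} there is a $\pm$ twist (the generalized homological invariant), and twisting changes the Hodge bundle, exactly as quadratic twists change the Hodge bundle of an elliptic surface. This twist is precisely what produces the starred fibres and the half-integer entries of Table \ref{tab:fibres}, so any argument that pins the degree of $\lambda$ to $\tfrac16\overline{g}^*\calO(1)$ without tracking the twist cannot give the stated formula. Second, your recipe for $S_P$ cannot produce the fractional values in the table. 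Near a fibre of a minimal form, $K_{\calX}$ is trivial, so ``the coefficient of a component of the central fibre in $K_{\calX}$'' is zero; and if instead you compare the minimal-form trivialization with any meromorphic $3$-form pulled back from a frame of the (canonically extended) Hodge bundle over the punctured disc, the ratio is an invertible function on the punctured preimage, hence constant on fibres, hence the pullback of $\delta^k$ times a unit --- so the discrepancy is an \emph{integer} multiple of the whole fibre $\pi^*P$. Division by multiplicities can therefore never yield $S_P=\tfrac12,\tfrac16,\tfrac56,\dots$; those fractions come from the behaviour of the moduli part at the orbifold points, cusp, and twist points of $\overline{g}$, which your step (3) never isolates. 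Relatedly, your integrality ``consistency check'' is based on a false premise: the formula is an equality of $\bQ$-divisor classes, not an assertion that $\pi^*\bigl(S_PP\bigr)$ is integral, and indeed a fibre of type $\mathrm{I}_0^*$ over a general point of $\overline{\calM}_{M_1}$ has $S_P=\tfrac12$ while containing components of multiplicity $1$ (Figure \ref{fig:I0*}), so your check would wrongly force $S_P\in\bZ$ there. To repair the argument along your lines you would essentially have to re-prove the Fujino--Mori statement in this case: fix a canonical extension of the moduli part equal to $\tfrac16\overline{g}^*\calO(1)$ (justified by the weight-$6$ automorphic embedding of $X(1)$, not just the hypergeometric exponents), and then compute, at each special point, the order of vanishing of the extended period form against the minimal-form trivialization --- which is exactly the log canonical threshold computation the paper performs.
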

\begin{proof} This follows from a minor modification of Fujino's and Mori's canonical bundle formula for K3 fibrations \cite[Theorem 1.2 and Corollary 1.3]{cbf2}. According to these results, a general formula for the canonical sheaf of a K3-fibred threefold is given by
\[K_{\calX} = \pi^*\left(K_B + \tfrac{1}{a}L + \sum_{P \in B} S_P P\right) + D.\]
The terms appearing in this formula are as follows.
\begin{itemize}
\item $L$ is a divisor in the linear system $\overline{g}^*\calO(1)$.
\item $a = kn$ is the product of the dimension $n$ of the moduli space and the weight $k$ of the automorphic forms that provide an embedding of the Baily-Borel-Satake compactification of this moduli space into a projective space. In our case $n = 1$ and $k = 6$, as ${\calM}_{M_1}$ is the classical modular curve $\bH/\mathrm{PSL}(2,\bZ)$.
\item $D$ is an effective $\bQ$-divisor which satisfies $\pi_*\calO_{\calX}(\lfloor i D \rfloor) = \calO_B$ for all $i \geq 0$. The assumption on the canonical sheaf in the definition of minimal form ensures that $D$ is trivial in our case.
\item $S_P = 1 - t_P$, where $t_P$ is the log canonical threshold of $\pi^*P$ with respect to $(\calX,-D)$ \cite[Definition 3.5]{cbf2}.
\end{itemize}

It thus remains to compute the values of the log canonical thresholds $t_P$. These may be computed on an open neighbourhood of each fibre and do not depend on the birational model of the fibre that we choose, so it suffices to compute them for the explicit fibres studied in Section \ref{sec:singularfibres}. If the support of the fibre in question is simple normal crossings then $t_P = \frac{1}{m_P}$, where $m_P$ is the multiplicity of the highest multiplicity component; this holds in all cases except $\mathrm{II}$, $\mathrm{IV}$, and when $\gamma = -1$. In the remaining cases the log canonical thresholds may be computed by proceeding to a log resolution of the pair $(\calX,\pi^*P)$; such a resolution may be achieved by at most two blow-ups.
\end{proof}

\subsection{Betti numbers} \label{sec:betti}

We conclude by computing the Betti numbers of our threefolds. Note that all of the results in this section require the threefold $\calX$ to be smooth; a sufficient condition for smoothness is given by Proposition \ref{prop:smoothness}.

Since $\calX$ is connected, we have $b_0(\calX) = 1$, and Poincar\'{e} duality gives $b_k(\calX) = b_{6-k}(\calX)$ for all $0 \leq k \leq 6$. The interesting Betti numbers are therefore $b_1(\calX)$, $b_2(\calX)$, and $b_3(\calX)$. We begin with the first Betti number.

\begin{proposition} Let $\pi\colon \calX \to B$ be a smooth threefold fibred non-isotrivially by $M_1$-polarized K3 surfaces in minimal form over a compact curve $B$. Then 
\[b_1(\calX) = 2g(B).\]
\end{proposition}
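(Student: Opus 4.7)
The plan is to use the Leray spectral sequence $E_2^{p,q} = H^p(B, R^q\pi_*\bQ) \Rightarrow H^{p+q}(\calX, \bQ)$ associated to $\pi$. Since $\pi$ is proper with connected fibres (a consequence of flatness and having a generically irreducible K3 fibre), we have $R^0\pi_*\bQ = \bQ_B$, and the Leray edge sequence in low degree reads
\[0 \to H^1(B, \bQ) \to H^1(\calX, \bQ) \to H^0(B, R^1\pi_*\bQ) \xrightarrow{d_2} H^2(B, \bQ).\]
Since $\dim H^1(B,\bQ) = 2g(B)$, the entire problem reduces to establishing the vanishing $H^0(B, R^1\pi_*\bQ) = 0$.

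For this, note that on the smooth locus $U \subseteq B$ of $\pi$, the sheaf $R^1\pi_*\bQ$ is the local system with fibre $H^1(X_p,\bQ)$ over a smooth K3 fibre $X_p$, and this fibre vanishes. Hence $R^1\pi_*\bQ$ is supported on the finite set $\Sigma := B \setminus U$ of singular-fibre locations. By proper base change for the constructible sheaf $R^1\pi_*\bQ$, for each $p \in \Sigma$ the stalk is $(R^1\pi_*\bQ)_p = H^1(X_p,\bQ)$, and the decomposition of a sheaf supported on a discrete closed set yields
\[H^0(B, R^1\pi_*\bQ) \;=\; \bigoplus_{p \in \Sigma} H^1(X_p, \bQ).\]
It therefore suffices to prove that every singular fibre $X_p$ arising in a minimal form satisfies $H^1(X_p, \bQ) = 0$.

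To check this I would proceed case-by-case through the list in Table \ref{tab:fibres}, using the explicit descriptions of the fibres constructed in Section \ref{sec:singularfibres}. For the simple normal crossings cases, I would use Deligne's weight spectral sequence with $E_1^{-p,q} = H^{q-2p}(X_p^{[p]},\bQ)$: the weight-zero piece of $H^1(X_p,\bQ)$ is $H^1$ of the dual complex, and the weight-one piece is the kernel of $\bigoplus_i H^1(X_p^i) \to \bigoplus_{i<j} H^1(X_p^{ij})$. For most fibre types (e.g.\ $\mathrm{I}_d$, $\mathrm{I}_d^*$, $\mathrm{II}^*$, $\mathrm{IV}^*$, $\mathrm{III}^*$) one reads off from the figures in Section \ref{sec:singularfibres} that all components are rational and the dual complex is simply connected, and the conclusion is immediate. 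The cases $\mathrm{II}$ and $\mathrm{IV}$ are not normal crossings, but replacing $X_p$ by a proper birational modification that resolves the triple locus, together with the observation that the triple line $T$ is rational, reduces them to the SNC situation. Finally, for the $\mathrm{I}_0^*$ fibre (and the $\mathrm{III}^*$ fibre obtained by degeneration from it) there is a single elliptic ruled component, and here one must verify that its $H^1$ injects into the $H^1$ of its intersection with the other components through the elliptic double curves lying in the configuration.

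I expect the main obstacle to be precisely the last case: the elliptic ruled component in the $\mathrm{I}_0^*$ (and $\mathrm{III}^*$) fibre contributes a potentially nontrivial weight-one piece, and making the injection of $H^1$'s precise requires identifying the double-curve classes on this component explicitly from Figure \ref{fig:I0*}. The verification should follow the standard Type II Kulikov pattern: the elliptic base class of the ruled surface restricts nontrivially to each of the two elliptic double curves bordering it, providing the required injection. Similarly for fibres over $\gamma = -1$ that carry an isolated threefold terminal singularity (Propositions \ref{prop:III}--\ref{prop:III*}), one passes to a small resolution of the fibre before applying the weight argument; terminality ensures nothing pathological appears in codimension two. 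Assembling the vanishing across all fibres completes the computation.
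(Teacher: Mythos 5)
Your reduction via the topological Leray spectral sequence is sound as far as it goes: $R^0\pi_*\bQ = \bQ_B$, the sheaf $R^1\pi_*\bQ$ is supported on the finitely many points with degenerate fibres, its stalks there are $H^1(X_p,\bQ)$, and so the claim would follow from $H^1(X_p,\bQ)=0$ for every singular fibre. Note, however, that this is a genuinely different route from the paper, which performs no fibre analysis at all: there one uses the Hodge decomposition to reduce to $h^1(\calX,\calO_{\calX})=g(B)$, and then shows $R^1\pi_*\calO_{\calX}=0$ by combining Koll\'ar's torsion-freeness theorem with the vanishing of $h^1(\calO)$ on a general K3 fibre; the minimal-form hypothesis enters precisely through Proposition \ref{prop:cbf}, which makes $K_{\calX}$ a pull-back of a $\bQ$-Cartier divisor from $B$ and hence makes Koll\'ar's theorem applicable. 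Your argument avoids the implicit K\"ahler/projectivity input of the Hodge-theoretic route, but it relocates all of the work into the singular fibres, which is exactly what the paper's trick is designed to avoid.

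The genuine gap is that this relocated work --- the vanishing $H^1(X_p,\bQ)=0$ for every fibre type in Table \ref{tab:fibres} --- is only sketched, and the sketch is least reliable where the check is hardest. First, for the infinite families $\mathrm{I}_d$ and $\mathrm{I}_d^*$ (and for $\mathrm{II}^*$ with its $53$ components) the simple connectivity of the dual complex cannot be ``read off from the figures''; it requires an argument uniform in $d$, e.g.\ one tracking the inductive resolution of the point $R$ in Section \ref{sec:singularfibres}. Second, the $\mathrm{I}_0^*$ (and $\mathrm{III}^*$) fibres are not Kulikov Type II models: they are non-reduced, containing a multiplicity-two rational component, and the elliptic ruled component is the exceptional divisor over the elliptic curve of $A_1$ singularities of Figure \ref{fig:I0*unresolved}; it has a single elliptic double curve (its intersection with the multiplicity-two component), not the two elliptic double curves of your ``standard Type II pattern''. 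Restriction of $H^1$ of this component to that one curve is already injective, so the conclusion survives, but your stated mechanism does not match the geometry, and for $\mathrm{III}^*$ the curve degenerates to a nodal cubic, which changes both the weight-one and weight-zero contributions and must be re-examined. Third, Theorem \ref{thm:singularfibres} classifies fibres only up to birational maps that are isomorphisms in codimension $1$ (see the remark following it), so a computation of $H^1$ on the explicit models of Section \ref{sec:singularfibres} transfers to your given $\calX$ only after you argue that $H^1(X_p)\cong H^1(\calX_{\Delta_p})$ is unchanged by such modifications. (Conversely, since the proposition assumes $\calX$ smooth, your worry about fibres carrying isolated terminal threefold singularities at $\gamma=-1$ is moot.) If you wish to keep your route, the cleanest way to close the gap is to avoid the case analysis entirely: $H^1(X_p,\bQ)\cong H^1(\calX_{\Delta_p},\bQ)$, and after a ramified base change and semistable reduction a trace argument injects this group into $H^1$ of the semistable central fibre, which vanishes by Clemens--Schmid because $H^1$ of the nearby K3 fibre is zero.
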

\begin{proof} We follow the method of \cite[Proposition 2.7]{cytfmqk3s}. Via the Hodge decomposition, it suffices to show that $h^1(\calX,\calO_{\calX}) = g(B)$. This will follow from the Leray spectral sequence for $\pi\colon \calX \to B$ if we can show that $R^1\pi_*\calO_{\calX} = 0$. By Proposition \ref{prop:cbf}, the canonical divisor of $\calX$ is the pull-back of a $\bQ$-Cartier $\bQ$-divisor from $B$, so Koll\'{a}r's torsion-freeness theorem \cite[Theorem 10.19]{smaf} gives that $R^1\pi_*\calO_{\calX}$ is torsion-free. But $H^1(X_p,\calO_{X_p}) = 0$ for a general fibre $X_p$ of $\calX$, so $R^1\pi_*\calO_{\calX}$ also has trivial generic fibre. We therefore obtain $R^1\pi_*\calO_{\calX} = 0$, as required.
\end{proof}

The second Betti number follows from an application of \cite[Lemma 3.2]{cytfmqk3s} (reproduced in the appendix as Lemma \ref{lem:oldb2}). The reader may wish to compare \cite[Proposition 3.5]{cytfmqk3s} and its generalization \cite[Proposition 4.3]{cytfhrlpk3s}, which compute the second Betti numbers of Calabi-Yau threefolds fibred by $M_n$-polarized K3 surfaces with $n > 1$.

\begin{proposition} \label{prop:b2} Let $\pi\colon \calX \to B$ be a smooth threefold fibred non-isotrivially by $M_1$-polarized K3 surfaces in minimal form over a compact curve $B$. Then
\[b_2(\calX) = 20 + \sum_{P \in B} (C_P - 1),\]
where $C_P$ is the number of irreducible components in the fibre $\pi^{-1}(P)$ \textup{(}which may be read off from the fourth column of Table \ref{tab:fibres}\textup{)}.
\end{proposition}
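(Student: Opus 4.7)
My plan is to reduce the statement to a direct application of Lemma \ref{lem:oldb2} in the appendix. That lemma provides a formula for $b_2$ of a smooth K3-fibred threefold $\calX \to B$ of the shape
\[ b_2(\calX) \;=\; r \;+\; b_2(B) \;+\; \sum_{P\in B}(C_P - 1), \]
where $r$ is the rank of the monodromy-invariant part of $H^2$ of a general fibre, $b_2(B) = 1$ since $B$ is a smooth projective curve, and the sum counts the extra irreducible components of the singular fibres beyond the single generic component. Modulo verifying the hypotheses of that lemma (smoothness of $\calX$, which we are assuming, and appropriate behaviour of the higher direct images, which follows from the Leray argument already used for $b_1$), the only real quantity to identify is $r$.

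The $M_1$-polarization trivially contributes $19$ monodromy-invariant classes via the local subsystem $\mathcal{L}$ of Definition \ref{def:L-pol}, so $r \geq 19$. The crux is to show that the transcendental part $M_1^{\perp} = H \oplus \langle 2\rangle$, of rank $3$, has no additional monodromy invariants. For this, non-isotriviality combined with the fact that $\overline\calM_{M_1}\cong \bP^1$ forces the extended generalized functional invariant $\overline{g}\colon B \to \overline\calM_{M_1}$ to be a surjective finite morphism of projective curves. In particular the preimage of each of the points $\gamma = -1, 0, \infty$ is nonempty, so the monodromy representation $\rho$ of $\pi_1(U,p)$ on $M_1^{\perp}\otimes \bQ$ contains conjugates of nonzero powers of the matrices $\Gamma_{-1}, \Gamma_0, \Gamma_{\infty}$ from Lemma \ref{lem:X1monodromy} (possibly multiplied by $\pm\mathrm{Id}$, according to Proposition \ref{prop:generalmonodromy}). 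An inspection of these matrices shows that their common fixed subspace in $M_1^{\perp}\otimes\bQ$ is trivial: the fixed space of any positive power of $\Gamma_0$ is $\langle e_1\rangle$ (since $\Gamma_0$ is unipotent), but $e_1$ is sent to $-e_1$ by $\Gamma_{-1}$ and the sign cannot be cancelled by multiplying by $-\mathrm{Id}$. Hence $r = 19$.

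Substituting $r = 19$ and $b_2(B) = 1$ into the formula from Lemma \ref{lem:oldb2} yields the desired equality
\[ b_2(\calX) \;=\; 20 \;+\; \sum_{P\in B}(C_P - 1). \]

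The main obstacle, such as it is, lies in the monodromy calculation above; everything else is direct application of the cited lemma. One slightly fiddly point will be handling the orbifold structure on $\overline\calM_{M_1}$ when passing from local monodromies of $\calX_1$ to those of $\calX$ via Proposition \ref{prop:generalmonodromy}, but the explicit classification in Table \ref{tab:fibres} makes this a case-by-case check that never produces any new invariants in $M_1^\perp \otimes \bQ$.
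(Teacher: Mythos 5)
Your overall route is the same as the paper's: the paper proves this proposition by a one-line application of Lemma \ref{lem:oldb2}, implicitly taking the monodromy-invariant rank $\rho_h$ of $H^2$ of a general fibre to be $19$ (the $M_1$-polarization classes) so that $1+\rho_h = 20$. The gap in your proposal is precisely in the step you identify as the crux: showing that the rank-$3$ transcendental part contributes no further invariants. Your argument "by inspection of the matrices" is not valid as written, for two reasons. First, the local monodromies at different points of $B\setminus U$ are each only conjugate to $(\pm\mathrm{Id})^b\,\Gamma_{\gamma(0)}^d$ by their own, unknown, point-dependent conjugating matrices; they are never simultaneously in the standard forms of Lemma \ref{lem:X1monodromy}, so you cannot intersect the fixed spaces of $\Gamma_0^d$ and $\Gamma_{-1}$ in one basis. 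The invariant line produced by a point over $\gamma=0$ is some conjugate of $\langle e_1\rangle$, and nothing in your argument shows that the (independently conjugated) monodromy at a point over $\gamma=-1$ moves that particular line. Second, the element you need may simply not exist among the local monodromies: at a point over $\gamma=-1$ with even ramification, or over $\gamma=\infty$ with ramification divisible by $6$, the local monodromy is $\pm\mathrm{Id}$ (fibre types $\mathrm{I}_0$, $\mathrm{I}_0^*$ in Table \ref{tab:fibres}), which imposes no constraint beyond the one-dimensional bound. Indeed, the purely local conjugacy data you invoke is compatible with a representation having a rank-one invariant subspace (e.g.\ one generated by a single nontrivial unipotent, its inverse, and $\pm\mathrm{Id}$), so no case-by-case check of Table \ref{tab:fibres} can close this step; a global input is required.

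The conclusion $\rho_h=19$ is nevertheless correct, and can be repaired in any of the following ways: (a) invoke the irreducibility of the $\bQ$-local system $\calT(\calX^U)\otimes\bQ$, which is exactly what the paper uses in the proof of Proposition \ref{prop:b3}; (b) use the theorem of the fixed part: a monodromy-invariant transcendental class would define a flat sub-Hodge structure, and since the family is non-isotrivial the $(2,0)$-part varies, so such a class would be of type $(1,1)$ on a general fibre, hence lie in $\NS(X_p)\cong M_1$, contradicting that it is transcendental; or (c) observe that, because the generalized functional invariant is dominant and finite, the monodromy image contains a finite-index subgroup of the full hypergeometric monodromy group of $\calX_1$, which is Zariski dense in the orthogonal group of $M_1^\perp\otimes\bR$ and therefore has no nonzero invariants. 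With $\rho_h = 19$ in hand, the rest of your proposal (substituting into Lemma \ref{lem:oldb2}) coincides with the paper's proof.
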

\begin{proof} This is a direct application of \cite[Lemma 3.2]{cytfmqk3s} (see Lemma \ref{lem:oldb2}).\end{proof}

The third Betti number is much more problematic; we present only a partial result. We begin by proving a mild generalization of \cite[Lemma 3.4]{cytfmqk3s} (reproduced in the appendix as Lemma \ref{lem:oldb3}), to deal with the fact that several of our singular fibres are not normal crossings.

\begin{lemma} \label{lem:b3} Let $\pi\colon \calX \to B$ be a smooth projective threefold fibred by K3 surfaces over a smooth compact curve $B$. Let $U \subset B$ denote the open set over which the fibres of $\pi$ are smooth, let $\pi^U$ denote the restriction of $\pi$ to $U$, and let $i\colon U \hookrightarrow B$ denote the inclusion. Suppose that $\calX$ admits a resolution $\widetilde{\calX} \to \calX$, so that every fibre of $\widetilde{\calX}$ is either
\begin{enumerate}
\item a K3 surface with at worst ADE singularities, or
\item a normal crossings union of smooth surfaces $S_i$ with $H^3(S_i,\bC) = 0$.
\end{enumerate}
Then $b_3(\calX) = h^1(B, i_*R^2\pi^U_*\bC)$.
\end{lemma}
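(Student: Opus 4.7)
The plan is to apply the Leray spectral sequence to $\pi\colon\calX \to B$. Since $\dim_\bR B = 2$, the only nontrivial $E_2^{p,q}$ terms on the diagonal $p + q = 3$ are $E_2^{0,3}$, $E_2^{1,2}$, and $E_2^{2,1}$, and the only higher differential that could affect any of them is $d_2 \colon E_2^{0,3} \to E_2^{2,2}$. Therefore, if I can show $E_2^{0,3} = 0$, the spectral sequence degenerates in this range and $b_3(\calX) = h^1(B, R^2\pi_*\bC) + h^2(B, R^1\pi_*\bC)$. The second of these vanishes because smooth K3 surfaces have $H^1 = 0$, so $R^1\pi_*\bC$ is a skyscraper supported on the finite set $B \setminus U$; and the adjunction map $R^2\pi_*\bC \to i_* R^2\pi^U_*\bC$ is an isomorphism over $U$, so its kernel and cokernel are skyscrapers on $B \setminus U$ as well, and the associated long exact sequences yield $h^1(B, R^2\pi_*\bC) = h^1(B, i_* R^2\pi^U_*\bC)$.

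The heart of the argument is the vanishing $E_2^{0,3} = H^0(B, R^3\pi_*\bC) = 0$. By proper base change, this is equivalent to $H^3(X_b,\bC) = 0$ for every fibre $X_b$ of $\pi$. The case $b \in U$ is trivial. For $b \in B \setminus U$, the fibre $X_b$ may itself be very singular, so I use the resolution $f\colon\widetilde{\calX} \to \calX$ to reduce to the well-behaved fibres of $\widetilde{\calX}$. Since $\calX$ is smooth and $f$ is a proper birational morphism of projective varieties, the decomposition theorem of Beilinson--Bernstein--Deligne exhibits $\bQ_{\calX}$ as a direct summand of $Rf_*\bQ_{\widetilde{\calX}}$; proper base change along the inclusion $X_b \hookrightarrow \calX$ then yields a splitting of $Rf_{b*}\bQ_{\widetilde{X}_b}$, and hence an injection $H^3(X_b,\bQ) \hookrightarrow H^3(\widetilde{X}_b,\bQ)$. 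It thus suffices to prove $H^3(\widetilde{X}_b, \bC) = 0$ in each of the two cases of the hypothesis.

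Case (1) is immediate: a K3 surface with ADE singularities has the same odd Betti numbers as its smooth minimal resolution, which is an honest K3 with $b_3 = b_1 = 0$. For case (2), I analyse the Mayer--Vietoris spectral sequence $E_1^{p,q} = \bigoplus_{|I| = p+1} H^q(S_I, \bC) \Rightarrow H^{p+q}(\widetilde{X}_b, \bC)$ associated with the normal crossings union $\widetilde{X}_b = \bigcup_i S_i$. On the diagonal $p + q = 3$: $E_1^{0,3} = \bigoplus_i H^3(S_i) = 0$ by hypothesis; $E_1^{2,1} = 0$ because triple intersections of components are points; and $E_1^{3,0} = 0$ because a normal crossings divisor in a smooth threefold admits no $4$-fold intersections. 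The only remaining term
\[ E_2^{1,2} = \mathrm{coker}\Bigl(\bigoplus_i H^2(S_i) \longrightarrow \bigoplus_{i<j} H^2(S_{ij})\Bigr) \]
vanishes provided that the restriction map above is surjective, which follows from the observation that an ample class on $S_i$ restricts to a positive-degree class on each component of each double curve $S_{ij} \subset S_i$, so together with the analogous classes on the neighbouring components these generate $\bigoplus_{i<j} H^2(S_{ij})$. The two main technical obstacles are the appeal to the decomposition theorem to descend the vanishing $H^3(\widetilde{X}_b) = 0$ to $H^3(X_b) = 0$, and the verification of the Mayer--Vietoris surjectivity for the specific fibre configurations arising in Section~\ref{sec:singularfibres}, where it can be read off from the explicit intersection diagrams.
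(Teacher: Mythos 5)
Your reduction to the singular fibres and the appeal to the decomposition theorem are fine in outline, but the crucial step — the vanishing $H^3(\widetilde{X}_b,\bC)=0$ for the normal crossings fibres — has a genuine gap, and in fact you are trying to prove more than is true under the hypotheses. In the Mayer--Vietoris spectral sequence the term you must kill is
\[
E_2^{1,2}\;=\;\mathrm{coker}\Bigl(\bigoplus_i H^2(S_i,\bC)\longrightarrow \bigoplus_{i<j}H^2(S_{ij},\bC)\Bigr)\;\cong\;\Gr^W_2 H^3(\widetilde{X}_b),
\]
and your argument for surjectivity is not valid: the restriction of an ample class has positive degree on every component of every double curve, but positivity of these degrees does not make the degree functionals jointly surjective onto $\bigoplus_{i<j}H^2(S_{ij},\bC)$. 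For instance, if a double curve $S_{ij}$ is disconnected and its components have proportional classes in both $S_i$ and $S_j$ (two fibres of the same ruling, say), the image of all the restriction maps lies in a proper subspace and the cokernel is nonzero. Nothing in the hypotheses of the lemma (which only constrain $H^3(S_i,\bC)$, i.e.\ the weight-$3$ part) rules this out, so $H^3(\widetilde{X}_b,\bC)=0$ is not a consequence of the stated assumptions; deferring the verification to ``the specific configurations of Section 5'' concedes this, does not prove the lemma as stated, and would in any case be a substantial unfinished computation (e.g.\ for the $53$-component $\mathrm{II}^*$ fibre). A secondary, fixable, slip: ``kernel and cokernel are skyscrapers'' does not by itself give $h^1(B,R^2\pi_*\bC)=h^1(B,i_*R^2\pi^U_*\bC)$ — a skyscraper cokernel does affect $h^1$ through the $H^0$ terms — so you need the surjectivity of $R^2\pi_*\bC\to i_*R^2\pi^U_*\bC$, i.e.\ the local invariant cycle theorem for the smooth projective $\calX$.

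The paper's proof is engineered precisely to avoid needing $\Gr^W_2$ to vanish. Using Zucker's results, the skyscraper $\calS_3=\ker(R^3\pi_*\bC\to i_*R^3\pi^U_*\bC)$ has local sections identified with the image of a morphism of \emph{mixed Hodge structures} $\widetilde{\phi}_p\colon H^3(\widetilde{\calX}_{\Delta_p},\partial\widetilde{\calX}_{\Delta_p})\to H^3(\widetilde{X}_p)$ (an inclusion of $H^0(U_p,\calS_3)$ into $\im\widetilde{\phi}_p$ in the resolved setting), and weight considerations force this image into $\Gr^W_3 H^3(\widetilde{X}_p)$, which is exactly the piece killed by the hypothesis $H^3(S_i,\bC)=0$ (it equals the $E_2^{0,3}$ term of your spectral sequence). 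So the lemma only requires the weight-$3$ graded piece to vanish, while your route requires the full cohomology $H^3(\widetilde{X}_b,\bC)$ to vanish; to repair your argument you would either have to import the same Hodge-theoretic strictness/weight input, or find an independent proof that $\Gr^W_2 H^3(\widetilde{X}_b)$ cannot contribute — neither of which is supplied by the ample-class observation.
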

\begin{proof} \cite[Lemma 3.4]{cytfmqk3s} corresponds to the statement above with $\widetilde{\calX} = \calX$; see Lemma \ref{lem:oldb3}. The proof of our result is almost identical to that one, so we skim most of the details and highlight only the important differences. 

The Leray spectral sequence gives a decomposition
\[b_3(\calX) = h^0(B,R^3\pi_*\bC) + h^1(B, R^2\pi_*\bC) + h^2(B,R^1\pi_*\bC).\]
To compute the terms in this decomposition, take cohomology of the following short exact sequence, which exists for each $n$ by \cite[Proposition 15.12]{htdcl2cpm},
\[0 \longrightarrow \calS_n \longrightarrow R^n\pi_* \bC \longrightarrow i_*R^n\pi^U_*\bC \longrightarrow 0;\]
here $\calS_n$ is a skyscraper sheaf supported on $B \setminus U$.  Using this sequence and standard results about cohomological vanishing, our statement reduces to showing that for every point $p \in B \setminus U$, the cohomology $H^0(U_p,\calS_3)$ is trivial, where $U_p$ is the preimage in $\calX$ of a small open disc containing $p$. 

The proof that $H^0(U_p,\calS_3) = 0$ for fibres $X_p = \pi^{-1}(p)$ satisfying case (1) of the lemma follows from the Clemens contraction theorem.  We may therefore restrict our attention to fibres $X_p$ satisfying case (2).

When $\widetilde{\calX} = \calX$, as in \cite[Lemma 3.4]{cytfmqk3s}, it follows from a result of Zucker \cite[Section 15]{htdcl2cpm} that for any fibre $X_p$ satisfying case (2),
\[H^0(U_p,\calS_3) = \im\big(\phi_p \colon H^3(\calX_{\Delta_p},\partial \calX_{\Delta_p}) \to H^3(X_p)\big),\]
where $\phi_p$ is a morphism of mixed Hodge structures and $\calX_{\Delta_p}$ is the preimage in $\calX$ of a small closed disc $\Delta_p$ containing $p$. To generalize to our setting it suffices to note that, by \cite[Section 15]{htdcl2cpm} again, for any resolution $\widetilde{\calX} \to \calX$ we have an \emph{inclusion}
\[H^0(U_p,\calS_3) \subset \im\big(\widetilde{\phi}_p \colon H^3(\widetilde{\calX}_{\Delta_p},\partial \widetilde{\calX}_{\Delta_p}) \to H^3(\widetilde{X}_p)\big).\]

The proof then concludes as in \cite[Lemma 3.4]{cytfmqk3s}. Results from \cite[Section 15]{htdcl2cpm} about the mixed Hodge structure on $H^3(\widetilde{\calX}_{\Delta_p},\partial \widetilde{\calX}_{\Delta_p})$  force $\im(\widetilde{\phi}_p)$ to lie inside the third weight graded piece $\Gr^W_3(H^3(\widetilde{X}_p))$, which may be computed using the Mayer-Vietoris spectral sequence. This computation shows that the assumption $H^3(S_i,\bC) = 0$ forces $\Gr^W_3(H^3(\widetilde{X}_p))$ to be trivial. Thus $H^0(U_p,\calS_3)$ is trivial also, completing the proof.
\end{proof}

Using this lemma, with some mild assumptions on the singular fibres to ensure that the relevant cohomological vanishing occurs, we can give a simple closed expression for the third Betti number. The reader may wish to compare \cite[Proposition 3.8]{cytfmqk3s} and its generalization \cite[Proposition 4.4]{cytfhrlpk3s}, which compute the third Betti numbers of Calabi-Yau threefolds fibred by $M_n$-polarized K3 surfaces with $n > 1$.

\begin{proposition} \label{prop:b3} Let $\pi\colon \calX \to B$ be a smooth threefold fibred non-isotrivially by $M_1$-polarized K3 surfaces in minimal form  over a compact curve $B$ and assume further that $\calX$ does not contain any singular fibres of type $\mathrm{I}_0^*$ with $\gamma \neq -1$. Then
\[b_3(\calX) = 6(g(B) - 1) + \sum_{P \in B} R_P,\]
where $R_P$ is the number from the seventh column of Table \ref{tab:fibres} corresponding to the type of the fibre $\pi^{-1}(P)$.
\end{proposition}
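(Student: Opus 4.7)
The plan is to apply Lemma~\ref{lem:b3} to reduce $b_3(\calX)$ to a cohomological computation on $B$, and then evaluate via a stratified Euler-characteristic argument using the local monodromy data of Table~\ref{tab:fibres}.

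First I would verify the hypothesis of Lemma~\ref{lem:b3}, namely that $\calX$ admits a resolution $\widetilde{\calX}\to\calX$ every fibre of which is either a K3 with ADE singularities, or a normal crossings union of smooth surfaces with vanishing $H^3$. Since $\calX$ is already smooth, only the non-normal-crossings fibre types $\mathrm{II}$ and $\mathrm{IV}$ require further work; their offending triple-point loci can be blown up to produce rational (ruled) exceptional divisors, at which point the fibre is normal crossings. A case-by-case inspection of the figures of Section~\ref{sec:singularfibres} then shows that every irreducible component of every fibre is rational, with the single exception of the elliptic ruled component that appears in the $\mathrm{I}_0^*$ fibre of Proposition~\ref{prop:I0*} (where the cubic in the component $\{x=0\}$ is smooth). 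This is precisely the case excluded by the hypothesis; the $\mathrm{I}_0^*$ fibres over $\gamma=-1$, which arise instead from Proposition~\ref{prop:III*}, involve a nodal cubic whose normalization is rational, so all components are rational.

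Next I would apply Lemma~\ref{lem:b3} to obtain $b_3(\calX)=h^1(B,i_*R^2\pi^U_*\bC)$ and decompose the local system. By Definition~\ref{def:L-pol}, together with non-degeneracy of the intersection form on $M_1$, the rank-$22$ local system $R^2\pi^U_*\bC$ splits as $\calL_\bC\oplus\calT$, where $\calL_\bC$ is the trivial sub-local-system coming from the $M_1$-polarization and $\calT$ is the rank-$3$ transcendental local system whose monodromy around each singular fibre $P$ is conjugate to the matrix $\rho(\sigma_P)$ read off from Table~\ref{tab:fibres}. The computation then reduces to determining $h^1(B,i_*\calT)$.

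The final step is a stratified Euler-characteristic computation. Non-isotriviality implies that the global monodromy has no non-zero invariants on $\calT$, giving $h^0(B,i_*\calT)=0$; self-duality of $\calT$ via the intersection form, together with Poincaré--Verdier duality, then forces $h^2(B,i_*\calT)=0$. Thus
\[
h^1(B,i_*\calT) \;=\; -\chi(B,i_*\calT) \;=\; -3\,\chi(U) \;-\; \sum_{P\in B\setminus U}\dim\calT^{\rho(\sigma_P)}.
\]
Case-by-case comparison with Table~\ref{tab:fibres} verifies that $R_P = 3 - \dim\calT^{\rho(\sigma_P)}$, that is, $R_P$ records the dimension of the local coinvariants. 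Substituting and simplifying yields $6(g(B)-1)+\sum_P R_P$.

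The main obstacle will be the first step: the combinatorial verification, using the explicit local models and figures from Section~\ref{sec:singularfibres}, that after any required further blow-ups every component of every fibre in Table~\ref{tab:fibres} is rational. Identifying the elliptic ruled component in the $\mathrm{I}_0^*$ fibre of Proposition~\ref{prop:I0*} as the unique obstruction is what explains and motivates the extra hypothesis of the proposition. The final Euler-characteristic computation is then routine once $R_P$ has been interpreted as the dimension of the local coinvariants on $\calT$.
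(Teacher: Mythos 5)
Your proposal is correct and follows essentially the same route as the paper's proof: the same case-by-case check of the hypotheses of Lemma \ref{lem:b3} (with the elliptic ruled component of $\mathrm{I}_0^*$ fibres having $\gamma \neq -1$ identified as the unique obstruction, explaining the extra hypothesis), the same splitting of $R^2\pi^U_*$ into the trivial N\'eron--Severi part and the rank-$3$ transcendental local system $\calT(\calX^U)$, and the same interpretation of $R_P$ as $3-\dim\calT^{\rho(\sigma_P)}$. The only difference is that where you derive $h^1(B,i_*\calT)$ directly from an Euler-characteristic count together with the vanishing of $h^0$ and $h^2$, the paper instead cites the formula of del Angel, M\"{u}ller-Stach, van Straten and Zuo, which encapsulates exactly that computation.
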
 
\begin{proof} We wish to apply Lemma \ref{lem:b3}, but in order to do this we first need to check that the assumptions of that lemma hold. This is the case for most singular fibre types: all loci where these fibres have worse than normal crossings may be resolved by at most two blow-ups, and all components in the resulting resolved fibres are rational. The only problematic fibres are those of type $\mathrm{I}_0^*$ with $\gamma \neq -1$, which contain a component $S$ that is ruled over a smooth elliptic curve (this is the gray shaded component in Figure \ref{fig:I0*}). This component has $h^3(S,\bC) = 2$, so the condition $H^3(S_i,\bC) = 0$ does not hold for it; we therefore exclude such fibres by assumption.

Now let $U \subset B$ denote the open set over which the fibres of $\pi$ are smooth $M_1$-polarized K3 surfaces, let $\pi^U\colon \calX^U \to U$ denote the restriction of $\pi$ to $U$, and let $i\colon U \hookrightarrow B$ denote the inclusion. By Lemma \ref{lem:b3} we have $b_3(\calX) = h^1(B, i_*R^2\pi^U_*\bC)$. We may decompose
\[R^2\pi^U_*\bQ = (\calN\calS(\calX^U) \oplus \calT(\calX^U))\otimes \bQ\]
into a direct sum of two irreducible $\bQ$-local systems, where $\calT(\calX^U)$ is the transcendental variation of Hodge structure associated to $\calX^U$ and $\calN\calS(\calX^U)$ can be interpreted as those classes that lie in $\NS(X_p)$ for all fibres $X_p$ of $\calX^U$. Moreover, the $M_1$-polarization  forces $\calN\calS(\calX^U)$ to be a trivial local system, which has no higher cohomology, so we obtain
\[h^1(B, i_*R^2\pi^U_*\bQ) = h^1(B, i_*\calT(\calX^U) \otimes \bQ).\]

The rank of this last cohomology group may be computed using a variation on Poincar\'{e}'s formula in classical topology, due to del Angel, M\"{u}ller-Stach, van Straten and Zuo \cite{hca1pfcy3f}. In general, let $\bV$ be a $\bQ$-local system on $U$ and let $B \setminus U = \{P_1,\ldots,P_n\}$. Let $\sigma_i$ denote the monodromy transformation on a general fibre $\bV_P$ of $\bV$ associated to a simple closed counterclockwise loop around $P_i$. Then define 
\[R(P_i) = \rank(\bV_P) - \rank(\bV_P^{\gamma_i}),\]
where $\bV_P^{\gamma_i}$ is the subspace of elements of $\bV_P$ that are fixed by $\gamma_i$. With this notation, \cite[Proposition 3.6]{hca1pfcy3f} gives
\[h^1(B, i_*\bV) = 2(g(B) - 1) \rank(\bV) - \sum_{i = 1}^n R(P_i).\]
The result follows by applying this result to the local system $\calT(\calX^U)$, noting that $\rank(\calT(\calX^U)) = 3$ and the local monodromy matrices are those appearing in the fifth column of Table \ref{tab:fibres}.
\end{proof}

\appendix

\section{Prior results on Betti numbers of threefolds fibred by K3 surfaces} \label{appendix}

For completeness, in this appendix we reproduce two results from \cite{cytfmqk3s} that compute Betti numbers of threefolds  fibred by K3 surfaces.  These results are used in the proofs of Proposition \ref{prop:b2} and Lemma \ref{lem:b3}.  

Both results hold in the following general setting. $\pi\colon \calX \to B$ is a smooth projective threefold fibred by K3 surfaces over a compact curve $B$. The fibre of $\calX$ over $p \in B$ is denoted $X_p$. Denote by $i\colon U \hookrightarrow B$ the open set over which $\calX$ is smooth and the fibres of $\pi$ are smooth K3 surfaces, along with its natural embedding into $B$, and let $\pi^U\colon \calX^U \to U$ denote the restriction of $\pi$ to $U$.

\begin{lemma} \label{lem:oldb2} \textup{\cite[Lemma 3.2]{cytfmqk3s}} For any $p \in B$, let $C_p$ denote the number of irreducible components of the fibre $X_p$. Let $\rho_h$ be the rank of the subgroup of $H^2(X_p,\bC)$, for $p \in U$, that is fixed by the action of monodromy. Then we have
\[b_2(\calX) = b_4(\calX) = 1 + \rho_h + \sum_{p \in B}(C_p - 1).\]
\end{lemma}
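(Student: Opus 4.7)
The plan is to mimic the Shioda--Tate style decomposition of $H^2$ from elliptic surface theory, splitting $b_2(\calX)$ into a transcendental contribution coming from a general fibre and a vertical contribution coming from components of fibres. Since $\calX$ is a smooth compact $6$-manifold, Poincaré duality immediately gives $b_2(\calX) = b_4(\calX)$, so only $b_2$ needs to be computed.

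Fix a general $p' \in U$ and consider the restriction $r \colon H^2(\calX, \bC) \to H^2(X_{p'}, \bC)$, so $b_2(\calX) = \dim \ker(r) + \dim \mathrm{im}(r)$. For the image, I would apply Deligne's global invariant cycle theorem: since $\calX$ is smooth projective, $\mathrm{im}(r)$ is exactly the monodromy-invariant subspace $H^2(X_{p'}, \bC)^{\pi_1(U, p')}$, which has dimension $\rho_h$ by definition.

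For the kernel, I would set up the Leray spectral sequence $E_2^{p,q} = H^p(B, R^q\pi_*\bC) \Rightarrow H^{p+q}(\calX, \bC)$. Simple-connectedness of K3 surfaces gives $R^1\pi_*\bC = 0$ on $U$, so this sheaf is a skyscraper on the finite set $B \setminus U$ and in particular kills both $E_2^{1,1}$ and $E_2^{2,1}$. Using $R^0\pi_*\bC = \bC_B$ one has $E_2^{2,0} = H^2(B,\bC) = \bC$, and the only potentially nontrivial $E_2$-differential affecting the $p+q=2$ diagonal is $d_2 \colon E_2^{0,1} \to E_2^{2,0}$; this vanishes because the class $[F] \in H^2(\calX, \bC)$ of a general fibre is nonzero (it pairs positively with the square of any ample divisor on $\calX$), so the edge map $H^2(B,\bC) \to H^2(\calX,\bC)$ is injective and $E_\infty^{2,0} = E_2^{2,0}$. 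The spectral sequence thus degenerates on the relevant diagonal, yielding $b_2(\calX) = 1 + \dim H^0(B, R^2\pi_*\bC)$. Feeding $R^2\pi_*\bC$ through the canonical short exact sequence
\[0 \to \calS \to R^2\pi_*\bC \to i_*R^2\pi^U_*\bC \to 0,\]
where $\calS$ is a skyscraper on $B \setminus U$ and the surjection is the local invariant cycle theorem, and then taking global sections gives $\dim H^0(B, R^2\pi_*\bC) = \rho_h + \sum_{p} \dim \calS_p$.

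The core remaining task is to prove $\dim \calS_p = C_p - 1$ for every $p \in B \setminus U$. Each irreducible component $Y_{p,j}$ of $X_p$ is a divisor on the smooth threefold $\calX$, defining a class $[Y_{p,j}] \in H^2(\calX, \bC)$; since $Y_{p,j} \cap X_{p'} = \emptyset$ for all $p' \neq p$, the image of $[Y_{p,j}]$ in $H^0(B, R^2\pi_*\bC)$ lies in $\calS_p$, and the $C_p$ such classes satisfy the single relation $\sum_j m_{p,j}[Y_{p,j}] = [F]$ whose image in $H^0(B, R^2\pi_*\bC)$ vanishes, yielding $\dim \calS_p \geq C_p - 1$. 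The matching upper bound, together with the assertion that these component classes exhaust $\calS_p$, is the main obstacle: it requires a local analysis of the specialization map $H^2(X_p,\bC) \to H^2(X_{p'},\bC)^T$ at each singular fibre, ultimately identifying $\calS_p$ with the subspace of $H^2(X_p,\bC)$ generated by Poincaré duals of irreducible components modulo $[F]|_{X_p}$, via the weight filtration on the cohomology of a singular fibre in a one-parameter degeneration. Combining the two contributions yields $b_2(\calX) = 1 + \rho_h + \sum_{p \in B}(C_p - 1)$, with smooth points $p \in U$ contributing $C_p - 1 = 0$.
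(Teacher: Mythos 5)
First, a point of comparison: the paper itself gives no proof of this statement --- Appendix \ref{appendix} exists precisely to \emph{reproduce} it from \cite[Lemma 3.2]{cytfmqk3s} --- so your argument can only be measured against that source and on its own merits. Your overall architecture is sound and is the same sheaf-theoretic machinery the paper uses around this lemma: Poincar\'e duality gives $b_2=b_4$; the Leray spectral sequence together with the fact that $R^1\pi_*\bC$ is a skyscraper kills $E_2^{1,1}$ and $E_2^{2,1}$; the non-vanishing of the fibre class makes the edge map $H^2(B,\bC)\to H^2(\calX,\bC)$ injective, so $E_2^{0,1}\to E_2^{2,0}$ dies and $b_2(\calX)=1+h^0(B,R^2\pi_*\bC)$; and the Zucker-type exact sequence $0\to\calS\to R^2\pi_*\bC\to i_*R^2\pi^U_*\bC\to 0$, whose surjectivity is the local invariant cycle theorem (this is where projectivity of $\calX$ enters), reduces everything to $h^0(B,R^2\pi_*\bC)=\rho_h+\sum_p\dim\calS_p$. (Your opening paragraph with the restriction map $r$ and Deligne's global invariant cycle theorem is never used afterwards and can be deleted.)

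The genuine gap is exactly the step you flag and then defer: $\dim\calS_p=C_p-1$, and in fact \emph{both} inequalities are missing. For the lower bound you assert that the images of the component classes $[Y_{p,j}]$ in the stalk satisfy only the single relation $\sum_j m_{p,j}[Y_{p,j}]=[F]$; this is the threefold analogue of Zariski's lemma on fibre components (equivalently, the statement that $\ker\bigl(H_4(X_p)\to H^2(\calX_{\Delta_p})\bigr)$ is the line spanned by the total fibre) and is not automatic --- it needs an argument. For the upper bound, i.e.\ that the component classes exhaust $\ker\bigl(H^2(X_p,\bC)\to H^2(X_t,\bC)^{T_p}\bigr)$, you offer only a direction (``weight filtration on the cohomology of a singular fibre''). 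Note that the clean tool for this, the Clemens--Schmid exact sequence, applies to \emph{semistable} (reduced normal crossings) degenerations, whereas the fibres at issue here (and in Table \ref{tab:fibres}) have multiple components and non-normal-crossings loci; passing to a semistable model changes the number of components, so one must either control that change or argue directly via Zucker's local analysis of $R^2\pi_*$ as in \cite[Section 15]{htdcl2cpm}. Since this identification of $\dim\calS_p$ is the actual content of the lemma --- everything before it is formal --- the proposal as written does not establish the formula.
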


\begin{lemma} \label{lem:oldb3} \textup{\cite[Lemma 3.4]{cytfmqk3s}} Suppose that every fibre $X_p$ is either:
\begin{enumerate}
\item a K3 surface with at worst ADE singularities, or
\item a normal crossings union of smooth surfaces $S_i$ with $H^3(S_i,\bC) = 0$.
\end{enumerate}
Then $b_3(\calX) = h^1(B, i_*R^2\pi^U_*\bC)$.
\end{lemma}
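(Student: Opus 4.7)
The strategy is to run the Leray spectral sequence for $\pi\colon\calX\to B$ and reduce the identity $b_3(\calX)=h^1(B,i_*R^2\pi^U_*\bC)$ to a vanishing statement for the skyscraper part of $R^3\pi_*\bC$; the resolution $\widetilde{\calX}\to\calX$ is then used only to verify that vanishing. Since $B$ is a smooth curve, all Leray terms $E_2^{p,q}$ with $p\geq 2$ are zero, so
\[
b_3(\calX) \;=\; h^0(B,R^3\pi_*\bC)\;+\;h^1(B,R^2\pi_*\bC).
\]
For each $n$, the short exact sequence $0\to\calS_n\to R^n\pi_*\bC\to i_*R^n\pi^U_*\bC\to 0$ from \cite[Prop.~15.12]{htdcl2cpm} has skyscraper kernel $\calS_n$ supported on $B\setminus U$. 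Because $H^k$ of a skyscraper on a curve vanishes for $k\geq 1$, the associated long exact sequence yields an isomorphism $H^1(B,R^2\pi_*\bC)\cong H^1(B,i_*R^2\pi^U_*\bC)$. Moreover, a smooth K3 fibre satisfies $H^3=0$, so $i_*R^3\pi^U_*\bC=0$ and hence $R^3\pi_*\bC=\calS_3$ is itself a skyscraper; in particular $h^0(B,R^3\pi_*\bC)=\sum_{p\in B\setminus U}\dim(\calS_3)_p$. The theorem will therefore follow from the pointwise vanishing $(\calS_3)_p=0$ for every $p\in B\setminus U$.

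To obtain this vanishing I would invoke Zucker's mixed-Hodge-theoretic description of the skyscraper stalk \cite[Sec.~15]{htdcl2cpm}. For the smooth projective model $\widetilde{\calX}\to B$, which agrees with $\calX$ over $U$, there is a morphism of mixed Hodge structures
\[
\widetilde{\phi}_p\colon H^3(\widetilde{\calX}_{\Delta_p},\partial\widetilde{\calX}_{\Delta_p})\;\longrightarrow\;H^3(\widetilde{X}_p),
\]
whose image equals the stalk of the corresponding skyscraper for $\widetilde{\calX}$ and, because the modification only affects fibres over $B\setminus U$, contains $(\calS_3)_p$ as a subspace. Weight considerations force $\im(\widetilde{\phi}_p)\subset \Gr^W_3 H^3(\widetilde{X}_p)$, so it suffices to show that this graded piece vanishes for each central fibre of $\widetilde{\calX}$.

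Now I split into the two fibre types in the hypothesis. If $\widetilde{X}_p$ is a K3 surface with only ADE singularities, the Clemens contraction theorem identifies $H^3(\widetilde{X}_p)$ with $H^3$ of its minimal (crepant) desingularization, which is a smooth K3 surface and hence has $H^3=0$. If instead $\widetilde{X}_p=\bigcup_i S_i$ is a normal-crossings union of smooth surfaces with $H^3(S_i,\bC)=0$, I compute $\Gr^W_3 H^3(\widetilde{X}_p)$ using the Mayer--Vietoris (Steenbrink) spectral sequence converging to $H^*(\widetilde{X}_p)$; the third weight graded piece is a subquotient of $\bigoplus_i H^3(S_i,\bC)$, which is zero by assumption. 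Combining both cases gives $(\calS_3)_p=0$ for every $p\in B\setminus U$, completing the argument.

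\textbf{Main obstacle.} The technical subtlety is the comparison between $\calX$ and its modification $\widetilde{\calX}$: we need to know that the stalks $(\calS_3)_p$ of $R^3\pi_*\bC$ inject into (rather than merely map to) the corresponding image $\im(\widetilde{\phi}_p)$ for $\widetilde{\calX}$. Making this precise requires a careful use of the local invariant cycle theorem and Zucker's results in the version that applies to smooth total spaces whose central fibre may have worse-than-normal-crossings support, which is exactly the situation we are in before passing to $\widetilde{\calX}$. Once this inclusion is in hand, the rest of the argument is a direct Hodge-theoretic vanishing.
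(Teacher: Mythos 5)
Your proposal follows essentially the same route as the paper's argument (given there within the proof of Lemma \ref{lem:b3}, which specializes to this statement when $\widetilde{\calX} = \calX$): the Leray spectral sequence, Zucker's exact sequence $0 \to \calS_n \to R^n\pi_*\bC \to i_*R^n\pi^U_*\bC \to 0$, the Clemens contraction theorem for fibres of type (1), and the weight argument plus Mayer--Vietoris for fibres of type (2). Two points need attention.

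First, your justification that ``all Leray terms $E_2^{p,q}$ with $p \geq 2$ are zero'' because $B$ is a smooth curve is false: $B$ is a \emph{compact} curve, of real dimension $2$, so $H^2(B,-)$ does not vanish in general (for instance $E_2^{2,0} = H^2(B,\bC) = \bC$). The term you actually need to kill is $E_2^{2,1} = H^2(B, R^1\pi_*\bC)$, and it does vanish, but for a different reason: since the smooth fibres are K3 surfaces, $R^1\pi^U_*\bC = 0$, so $R^1\pi_*\bC = \calS_1$ is a skyscraper sheaf and its higher cohomology vanishes. The paper keeps the term $h^2(B,R^1\pi_*\bC)$ in its decomposition and disposes of it in exactly this way.

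Second, the statement you are proving concerns $\calX$ itself, with no auxiliary resolution: every fibre of $\calX$ is assumed to be of type (1) or (2), so you may take $\widetilde{\calX} = \calX$ throughout, Zucker's result gives the equality $H^0(U_p,\calS_3) = \im(\phi_p)$ rather than a mere inclusion, and the ``main obstacle'' you describe (comparing the stalks of $\calS_3$ for $\calX$ against the image computed on a modification $\widetilde{\calX}$) does not arise. That comparison is precisely the extra content of the paper's generalization, Lemma \ref{lem:b3}; for the lemma as stated your detour through $\widetilde{\calX}$ is harmless but unnecessary.
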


\bibliography{publications,preprints}
\bibliographystyle{amsalpha}
\end{document}